\documentclass{article}

%% Inclusion of images
\usepackage{graphicx}

%% Authors names and institutions
\usepackage{authblk}

%% Drawing
\usepackage{tikz}
\usetikzlibrary{arrows,calc}

% Tikz style for graph drawing
\tikzstyle{arc}=[->,shorten <=3pt, shorten >=3pt,
                 >=stealth, line width=1.1pt]
\tikzstyle{edge}=[shorten <=2pt, shorten >=2pt,
                  >=stealth, line width=1.1pt]
\tikzstyle{vertex}=[circle, fill=white, draw,
                    minimum size=5pt,
                    inner sep=0pt, outer sep=0pt]

%% Positioning of floats
\usepackage{float}

%% Color in text, tables, etc.
\usepackage{xcolor}

%% Additional symbols
\usepackage{amsmath}
\usepackage{amssymb}

%% Fractions for circular chromatic number
\usepackage{xfrac}

%% Theorem environments
\usepackage{amsthm}

%% References with automatic names
\usepackage[capitalize]{cleveref}

% Theorem definition
\newtheorem{theorem}{Theorem}
\newtheorem{lemma}[theorem]{Lemma}
\newtheorem{corollary}[theorem]{Corollary}
\newtheorem{remark}[theorem]{Remark}
\newtheorem{observation}[theorem]{Observation}
\newtheorem{proposition}[theorem]{Proposition}
\newtheorem{problem}[theorem]{Problem}
\newtheorem{question}[theorem]{Question}
\newtheorem{claim}{Claim}

\title{Describing hereditary properties by forbidden circular orderings%
\thanks{The authors gratefully acknowledge support from NSERC Canada,
SEP-CONACYT A1-S-8397, CONACYT FORDECYT-PRONACES/39570/2020 and DGAPA-PAPIIT
IA104521 grants}}

\author[1]{Santiago~Guzm\'an-Pro\thanks{sanguzpro@ciencias.unam.mx}}
\author[2]{Pavol~Hell\thanks{pavol@sfu.ca}}
\author[1]{C\'esar~Hern\'andez-Cruz\thanks{chc@ciencias.unam.mx}}

\affil[1]{Facultad de Ciencias\\
  Universidad Nacional Aut\'onoma de M\'exico\\
  Av. Universidad 3000, Circuito Exterior S/N\\
  C.P. 04510, Ciudad Universitaria, CDMX, M\'exico}
\affil[2]{School of Computing Science\\
  Simon Fraser University\\
  8888 University Drive\\
  Burnaby, B.C., Canada, V5A 1S6}

\begin{document}
\date{}

\maketitle
\begin{abstract}
  Each hereditary property can be characterized by its set of minimal
  obstructions; these sets are often unknown, or known but infinite. By allowing
  extra structure it is sometimes possible to describe such properties by a
  finite set of forbidden objects. This has been studied most intensely when the
  extra structure is a linear ordering of the vertex set. For instance, it is
  known that a graph G is $k$-colourable if and only if $V(G)$ admits a linear
  ordering $\le$ with no vertices $v_1 \le \cdots \le v_{k+1}$ such that $v_i
  v_{i+1} \in E(G)$ for every $i \in \{ 1, \dots, k \}$. In this paper, we study
  such characterizations when the extra structure is a circular ordering of the
  vertex set. We show that the classes that can be described by finitely many
  forbidden circularly ordered graphs include forests, circular-arc graphs, and
  graphs with circular chromatic number less than $k$. In fact, every
  description by finitely many forbidden circularly ordered graphs can be
  translated to a description by finitely many forbidden linearly ordered
  graphs. Nevertheless, our observations underscore the fact that in many cases
  the circular order descriptions are nicer and more natural.
\end{abstract}

\section{Introduction}
\label{sec:Introduction}

We follow \cite{bondy2008} for terminology and notation not defined here, and we
consider simple finite graphs; when needed, we will work with loopless oriented
graphs as well. A \textit{hereditary property} is a class of graphs
$\mathcal{P}$ such that whenever $G \in \mathcal{P}$ and $H$ is an induced
subgraph of $G$, then $H \in \mathcal{P}$. A \textit{minimal obstruction} to a
hereditary property $\mathcal{P}$ is a graph $G$ that does not belong to
$\mathcal{P}$ but every proper induced subgraph does. A natural way to
characterize or define a hereditary property is by exhibiting its set of minimal
obstructions. For instance, bipartite graphs are characterized as those graphs
with no induced odd cycles, while the class of evenhole-free graphs is defined
as the class of graphs that contain no even cycle as an induced subgraph.
Unfortunately exhibiting the set of minimal obstructions might be a highly
complex task; as of today, the sets of minimal obstructions to the class of
$k$-colourable graphs is unknown for every positive integer $k$ greater than
$2$.

A \textit{linearly ordered graph} $(G,\le)$ is a graph $G$ together with a
linear ordering $\le$ of its vertex set. Given two linearly ordered graphs, $(G,
\le_G)$ and $(H, \le_H)$, we say that $(H, \le_H)$ is a \textit{linearly ordered
subgraph} of $(G, \le_G)$ if $H$ is a subgraph of $G$, and $\le_H$ is the
restriction of $\le_G$ to $V(H)$; if additionally $H$ is an induced subgraph of
$G$, we say that $(H, \le_H)$ is an \textit{induced linearly ordered subgraph}
of $(G, \le_G)$. Consider a set of linearly ordered graphs $F$. An
\textit{$F$-free linear ordering} of a graph $G$ is a linear ordering $\le$ of
$V(G)$ such that none of the linearly ordered graphs in $F$ is an induced
linearly ordered subgraph of $(G,\le)$. Given a linear order $\le$ of some set
$X$, we define the \textit{dual} $\le^\ast$ of $\le$, by letting $x \le^\ast y$
if and only if $y \le x$.

In 1990 Damaschke \cite{damaschkeTCGT1990} proposed to study characterizations
of hereditary properties $\mathcal{P}$ by exhibiting a finite set of linearly
ordered graphs $F$ such that $\mathcal{P}$ is the class of graphs that admit an
$F$-free linear ordering. He observed that, for instance, chordal graphs,
bipartite graphs and interval graphs are characterized by a forbidden set of
linearly ordered graphs on three vertices; also in \cite{damaschkeTCGT1990} he
asked if the class of circular-arc graphs can be described by finitely many
forbidden linearly ordered graphs. We will see that we can reinterpret a (known)
characterization of circular-arc graphs in our context to obtain a positive
answer to Damaschke's question.

Around 2014, Hell, Mohar and Rafiey \cite{hellESA2014} showed that for every set
$F$ of linearly ordered graphs on three vertices, the class of graphs that admit
an $F$-free linear ordering can be recognized in polynomial time. Recently,
Habib and Feuilloley published a thorough survey \cite{feuilloleyJDM} on the
subject, where they characterized all hereditary properties defined by forbidden
linear ordering on three vertices. Moreover, they showed that all of these
classes (except for two of them) can be recognized in linear time. In their
work, Habib and Feulilloley, stated that an obvious next step is to  study graph
properties described by forbidden linear orderings on more vertices.  All of our
results can be translated to this context.

In this work we start the study of circularly ordered graphs, in an attempt to
obtain a development parallel to the one described in the above paragraphs for
linearly ordered graphs.   We also present an interesting result relating strict
upper bounds on the circular chromatic number of graphs to certain forbidden
circular orderings.

This work is structured as follows. For the rest of this section we mention some
definitions and results on circular chromatic number of graphs that we will use
in Section~\ref{sec:circhrom}. In Section~\ref{sec:Basic} we introduce basic
definitions and notation to work with circularly ordered graphs, and we observe
that circular-arc graphs and outerplanar graphs can be described by finitely
many forbidden circularly ordered graphs. In Section~\ref{sec:ordpat} we study
how forbidden circular orderings and forbidden linear orderings are related;
moreover, we exhibit some properties expressible by finitely many forbidden
circularly ordered graphs. In Section~\ref{sec:circhrom} we explore a nice
relation between forbidden circularly ordered graphs and integer circular
chromatic numbers.  Finally, in Section~\ref{sec:complexity} we discuss some
computational aspects of finding admissible circular orderings of a given input
graph.

Recall that a \textit{homomorphism} between a pair of relational structures, $G$
and $H$, is a function $\varphi \colon V(G) \to V(H)$ that preserves all
relations. If such a function exists we write $G \to H$. In particular,
homomorphisms between graphs are functions that preserve adjacencies, so a graph
$G$ is $k$-colourable if and only if $G \to K_k$. (We will later work with
homomorphisms of linearly and circularly ordered graphs as well.) A bijective
homomorphism such that its inverse is also a homomorsphism is an
\textit{isomorphism}.

Given a pair of positive integers $p$ and $q$, $q \le p$, the \textit{rational
complete graph} $K_{\sfrac{p}{q}}$ has vertices $\{0,1,\dots, p-1\}$ and there
is an edge $ij$ if and only if the circular distance between $i$ and $j$ is at
least $q$. In particular, if $p < 2q$ then $K_{\sfrac{p}{q}}$ is the empty graph
on $p$ vertices, and $K_{\sfrac{p}{1}}$ is the complete graph on $p$ vertices.
These graphs have a nice monotonic property with respect to the
natural ordering of rational numbers and graph homomorphisms.

\begin{theorem}\cite{hell2004}
\label{thm:pqhoms}
Consider a pair of positive integers $p$ and $q$ such that $\sfrac{p}{q} \ge 2$.
Then $K_{\sfrac{p}{q}} \to K_{\sfrac{p'}{q'}}$ if and only if $\sfrac{p}{q} \le
\sfrac{p'}{q'}$.
\end{theorem}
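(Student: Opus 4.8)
The plan is to prove the two implications separately. For the backward implication, ``$\sfrac{p}{q}\le\sfrac{p'}{q'}$ implies $K_{\sfrac{p}{q}}\to K_{\sfrac{p'}{q'}}$'', I would exhibit an explicit homomorphism; for the forward implication I would use a homomorphism-monotone graph invariant.

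For the backward direction, assume $\sfrac{p}{q}\le\sfrac{p'}{q'}$, equivalently $pq'\le p'q$, and consider the map $f\colon\{0,\dots,p-1\}\to\{0,\dots,p'-1\}$ given by $f(i)=\lfloor ip'/p\rfloor$. To check that $f$ is a homomorphism $K_{\sfrac{p}{q}}\to K_{\sfrac{p'}{q'}}$, take an edge $ij$ and write it with $0\le i<j\le p-1$, so that $q\le j-i\le p-q$. From $x-1<\lfloor x\rfloor\le x$ one gets $f(j)-f(i)<(j-i)p'/p+1\le(p-q)p'/p+1=p'-qp'/p+1\le p'-q'+1$, the last inequality being exactly $pq'\le p'q$; as $f(j)-f(i)$ is an integer, $f(j)-f(i)\le p'-q'$. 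Symmetrically, $f(j)-f(i)\ge\lfloor(j-i)p'/p\rfloor\ge\lfloor qp'/p\rfloor\ge q'$. In particular $f(i)\ne f(j)$, and the circular distance in $\mathbb{Z}_{p'}$ between $f(i)$ and $f(j)$ is at least $q'$, so $f(i)f(j)\in E(K_{\sfrac{p'}{q'}})$.

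For the forward direction I would use the fractional chromatic number $\chi_f$, which is monotone under homomorphisms: if $G\to H$ then the preimage under the homomorphism of any independent set of $H$ is an independent set of $G$ (using that $H$ is loopless), so a fractional colouring of $H$ pulls back to $G$, giving $\chi_f(G)\le\chi_f(H)$. I would then compute $\chi_f(K_{\sfrac{p}{q}})=\sfrac{p}{q}$ whenever $\sfrac{p}{q}\ge2$: assigning weight $\sfrac{1}{q}$ to each of the $p$ independent sets $A_i=\{i,i+1,\dots,i+q-1\}$ (indices modulo $p$) covers every vertex with total weight $1$, which shows $\chi_f(K_{\sfrac{p}{q}})\le\sfrac{p}{q}$; and since $K_{\sfrac{p}{q}}$ is vertex-transitive, $\chi_f(K_{\sfrac{p}{q}})=p/\alpha(K_{\sfrac{p}{q}})$ (vertices over independence number), while $\alpha(K_{\sfrac{p}{q}})=q$, achieved by any arc of $q$ consecutive vertices. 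Finally, a homomorphism $K_{\sfrac{p}{q}}\to K_{\sfrac{p'}{q'}}$ maps the edge $\{0,q\}$ of $K_{\sfrac{p}{q}}$ to an edge, so $K_{\sfrac{p'}{q'}}$ is non-empty and $\sfrac{p'}{q'}\ge2$; hence $\sfrac{p}{q}=\chi_f(K_{\sfrac{p}{q}})\le\chi_f(K_{\sfrac{p'}{q'}})=\sfrac{p'}{q'}$.

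The step requiring the most care is the identity $\alpha(K_{\sfrac{p}{q}})=q$, equivalently the lower bound $\chi_f(K_{\sfrac{p}{q}})\ge\sfrac{p}{q}$: the inequality $\alpha\ge q$ is immediate from the arc $\{0,\dots,q-1\}$, whereas $\alpha\le q$ needs a short argument (treating separately the cases $p\ge 3q-2$ and $2q\le p<3q-2$) showing that when $p\ge 2q$ no set of $q+1$ vertices has all pairwise circular distances at most $q-1$; everything else is routine. For the forward direction one could instead invoke that the circular chromatic number is homomorphism-monotone and satisfies $\chi_c(K_{\sfrac{p}{q}})=\sfrac{p}{q}$, or carry out a winding-number computation on the image, under the homomorphism, of the canonical closed walk $0,q,2q,\dots$ (of length $p$ and winding number $q$) in $K_{\sfrac{p}{q}}$. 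In any case the statement is classical, as the citation indicates, so within the paper it is presumably simply quoted.
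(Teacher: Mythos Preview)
Your guess in the final sentence is exactly right: the paper does not prove this theorem at all. It is stated with the citation \cite{hell2004} and used as a black box, so there is no ``paper's own proof'' to compare against.

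Your argument is the standard one and is correct. The floor map $f(i)=\lfloor ip'/p\rfloor$ for the backward direction is precisely the construction in Hell--Ne\v{s}et\v{r}il, and your verification that $q'\le f(j)-f(i)\le p'-q'$ is clean (the lower bound uses subadditivity of $\lfloor\cdot\rfloor$, which you invoke implicitly). For the forward direction, going through $\chi_f$ and vertex-transitivity is one of the canonical routes; the alternative you mention via $\chi_c$ is arguably more self-contained here, since $\chi_c(K_{\sfrac{p}{q}})=\sfrac{p}{q}$ is essentially the content of the theorem itself together with Proposition~\ref{prop:min}, but either works. You are also right that the only place requiring genuine care is $\alpha(K_{\sfrac{p}{q}})=q$; the quickest uniform argument is that any independent set lies in the closed circular neighbourhood of radius $q-1$ of any one of its points, hence in an arc of $2q-1$ consecutive vertices, and within that arc the extreme points are at linear distance at most $q-1$ (otherwise, using $p\ge 2q$, their circular distance would be at least $q$), which bounds the size by $q$.
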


A graph $G$ is \textit{$(\sfrac{p}{q})$-colourable} if $G \to K_{\sfrac{p}{q}}$.
The \textit{circular chromatic number} of a graph $G$, denoted by $\chi_c(G)$,
is defined as
\[
  \chi_c(G) = \inf\{\sfrac{p}{q}\colon p\le n, G \to K_{\sfrac{p}{q}}\}
\]
where $n  = |V(G)|$. It turns out that this infimum is always reached.

\begin{proposition}\label{prop:min}\cite{hell2004}
For a graph $G$ on $n$ vertices, we have
\[ \chi_c(G) = \min\{\sfrac{p}{q}\colon p\le n, G \to K_{\sfrac{p}{q}}\}.
\]
\end{proposition}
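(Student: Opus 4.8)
The plan is to observe that the set
\[
S \;=\; \bigl\{\, \sfrac{p}{q} \;:\; p,q\in\mathbb{Z}^{+},\ q\le p\le n,\ G \to K_{\sfrac{p}{q}} \,\bigr\}
\]
over which the infimum defining $\chi_c(G)$ is taken is nonempty and finite; since a finite nonempty set of reals has a least element, the infimum is attained and hence equals the minimum.

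First I would verify that $S\neq\emptyset$: the identity map on $V(G)$ is a homomorphism $G\to K_n$ (equivalently, $G$ is properly $n$-colourable), and $K_n=K_{\sfrac{n}{1}}$, so $\sfrac{n}{1}\in S$. Next I would note that $S$ is finite. Since the rational complete graph $K_{\sfrac{p}{q}}$ is only defined when $q\le p$, every fraction $\sfrac{p}{q}\in S$ arises from a pair with $1\le q\le p\le n$; there are at most $n^{2}$ such pairs $(p,q)$, so $|S|\le n^{2}$. Therefore $S$ is a finite nonempty subset of $\mathbb{Q}$, it has a minimum element, and $\inf S=\min S$, which is exactly the asserted identity.

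I do not anticipate a genuine obstacle here. The only point needing care is to recognize that the restriction $p\le n$ in the definition, combined with the standing convention $q\le p$, already traps the candidate fractions inside a finite set, so that, in contrast to the situation where $p$ is allowed to grow without bound, no compactness or monotonicity argument (e.g.\ Theorem~\ref{thm:pqhoms}) is required. Should one wish, monotonicity could instead be used to replace any minimizing fraction by one in lowest terms, but the finiteness argument is the cleaner route.
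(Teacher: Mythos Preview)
The paper does not supply its own proof of this proposition; it is simply quoted from \cite{hell2004}. Your argument is correct: because the paper's definition of $\chi_c(G)$ already restricts to fractions with $p\le n$ (and the standing convention $q\le p$ forces $1\le q\le p\le n$), the set $S$ is a finite nonempty subset of $\mathbb{Q}$, so its infimum is a minimum. One cosmetic point: the map $G\to K_n$ witnessing $S\neq\emptyset$ is not literally the identity on $V(G)$ but any bijection $V(G)\to\{0,\dots,n-1\}$; the substance of the argument is unaffected.

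It may be worth remarking that the deeper content of the result in \cite{hell2004} is that the infimum over \emph{all} $p,q$ (with no bound on $p$) is attained, and attained already with $p\le |V(G)|$; the paper has folded that into the definition, which is why your finiteness argument suffices here.
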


As a nice consequence of these results, for every graph $G$ the inequalities
$\chi(G) -1 < \chi_c(G) \le \chi(G)$ hold.

There are several interpretations of the circular chromatic number of graph, the
following one will be useful for our work. Before stating it, recall that given
a (possibly closed) walk $W = v_1v_2\cdots v_n$ in an oriented graph $G'$, an
arc $(v_i,v_{i+1})$ is a \textit{forward arc} of $W$ while an arc
$(v_{i+1},v_i)$ is a \textit{backward arc} of $W$. We denote by $W^+$ ($W^-$)
the set of forward (backward) arcs of $W$.

\begin{theorem}\cite{hell2004}
\label{thm:orientationsCirc}
If $G$ is a forest, then $\chi_c(G) = 2$. Otherwise, $\chi_c(G)$ is the minimum
over all acyclic orientations $G'$ of $G$, of the maximum, over all cycles $C$
of $G$, of
\[
  1+ \frac{|C^+|}{|C^-|}.
\]
\end{theorem}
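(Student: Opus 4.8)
The plan is to treat the forest case separately and, for a graph $G$ that is not a forest, to prove the two inequalities
\[
  \min_{D}\,\max_{C}\Bigl(1+\tfrac{|C^{+}|}{|C^{-}|}\Bigr)\ \le\ \chi_{c}(G)
  \qquad\text{and}\qquad
  \chi_{c}(G)\ \le\ \min_{D}\,\max_{C}\Bigl(1+\tfrac{|C^{+}|}{|C^{-}|}\Bigr),
\]
where $D$ ranges over the acyclic orientations of $G$ and $C$ over its cycles; I read a cycle together with a choice of traversal, so reversing a traversal contributes the value $1+|C^{-}|/|C^{+}|$ and the inner expression is really $|C|/\min(|C^{+}|,|C^{-}|)$, which I abbreviate $\lambda_{D}(C)$. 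The forest case is immediate: a forest with an edge is bipartite, so $\chi_{c}(G)\le\chi(G)=2$ by the inequalities recorded after Proposition~\ref{prop:min}, whereas any $K_{\sfrac{p}{q}}$ with $\sfrac{p}{q}<2$ is edgeless, so a graph with an edge admits no homomorphism into it and $\chi_{c}(G)\ge2$. (An edgeless graph has $\chi_{c}=1$; the statement is to be read for forests with at least one edge.) Note that for acyclic $D$ every cycle has $|C^{+}|,|C^{-}|\ge1$, so all these ratios are finite.

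For the first inequality I would start from a homomorphism $c\colon G\to K_{\sfrac{p}{q}}$ realizing $\chi_{c}(G)=\sfrac{p}{q}$ (so $\sfrac{p}{q}\ge2$ since $G$ has an edge), regard the colours as integers in $\{0,\dots,p-1\}$, and orient each edge towards its larger-coloured endpoint; since adjacent vertices receive distinct colours this orientation $D$ is acyclic. Fixing a cycle $C$ with a traversal, each increment $d_{i}=c(v_{i+1})-c(v_{i})$ lies in $\{q,\dots,p-q\}$ on a forward arc and in $\{-(p-q),\dots,-q\}$ on a backward arc, while $\sum_{i}d_{i}=0$. Bounding this telescoping sum below by $|C^{+}|q-|C^{-}|(p-q)$ gives $|C|q\le|C^{-}|p$, i.e.\ $|C|/|C^{-}|\le\sfrac{p}{q}$, and bounding it above by $|C^{+}|(p-q)-|C^{-}|q$ gives $|C|/|C^{+}|\le\sfrac{p}{q}$ as well; hence $\lambda_{D}(C)\le\sfrac{p}{q}$ for every $C$, and this $D$ witnesses $\min_{D}\max_{C}\lambda_{D}(C)\le\chi_{c}(G)$.

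For the reverse inequality I would take an acyclic $D$ attaining $\mu:=\min_{D}\max_{C}\lambda_{D}(C)$, write $\mu=\sfrac{p}{q}$ in lowest terms, and build a homomorphism $G\to K_{\sfrac{p}{q}}$; since $\sfrac{p}{q}=|C|/|C^{-}|$ for the extremal cycle we have $p\mid|C|$, so $p\le|V(G)|$, and $\sfrac{p}{q}\ge2$. The reduction is that it suffices to find an \emph{integral circular tension} $\ell\colon A(D)\to\{q,q+1,\dots,p-q\}$, i.e.\ an integer arc-labelling with vanishing signed sum around every cycle: such an $\ell$ is the coboundary of an integer potential $\phi\colon V(G)\to\mathbb{Z}$ with $\ell_{uv}=\phi(v)-\phi(u)$ along arcs, and then $v\mapsto\phi(v)\bmod p$ sends every edge to an edge of $K_{\sfrac{p}{q}}$, since each arc-difference, and hence also its negative, lies in $\{q,\dots,p-q\}$, keeping the circular distance of the colours of adjacent vertices at least $q$. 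To produce $\ell$, I would first obtain a \emph{real} feasible labelling $y\colon A(D)\to[q,p-q]$ with vanishing signed cycle sums: by the tension analogue of Hoffman's circulation theorem this amounts exactly to the inequalities $|C^{+}|q\le|C^{-}|(p-q)$ and $|C^{-}|q\le|C^{+}|(p-q)$ for all cycles $C$, which a one-line rearrangement identifies with $|C|/|C^{-}|\le\sfrac{p}{q}$ and $|C|/|C^{+}|\le\sfrac{p}{q}$, both guaranteed by $\lambda_{D}(C)\le\mu=\sfrac{p}{q}$. Integrality then comes for free: the equations cutting out the tensions form a totally unimodular system (governed by a network matrix of fundamental cycles), so the nonempty bounded polytope defined by those equations together with $q\le y\le p-q$ has an integer vertex, hence an integer point.

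I expect the reverse inequality to be the crux, and within it the passage from a real circular tension to an integral one: the real labelling only yields the fractional/real circular-colouring bound, and what upgrades it to an honest homomorphism into a finite $K_{\sfrac{p}{q}}$ is the total unimodularity of the cycle space (equivalently, the integral version of Hoffman's theorem). Verifying that the feasibility inequalities handed back by Hoffman's criterion are literally the cycle-ratio hypothesis is routine arithmetic, and the forest case together with the first inequality is straightforward.
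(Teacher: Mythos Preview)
This theorem is not proved in the paper at all: it is quoted from \cite{hell2004} as a background result (note the citation attached to the theorem heading and the absence of any proof environment), so there is no ``paper's own proof'' to compare your attempt against.

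That said, your argument is essentially the classical proof of this result and is correct. A few minor comments. For the reverse inequality you can bypass Hoffman's theorem and total unimodularity entirely: the condition $q\le\phi(v)-\phi(u)\le p-q$ for every arc $(u,v)$ of $D$ is a system of integer difference constraints, which is feasible (by Bellman--Ford) exactly when the associated weighted digraph has no negative closed walk, and the cycle-ratio hypothesis $\lambda_D(C)\le \sfrac{p}{q}$ is precisely that condition; since all weights are integers, the shortest-path potential produced by Bellman--Ford is automatically integral. This is the same content as your TU step but lighter. Your reading of ``cycles'' as cycles with a chosen traversal (so that the inner maximum is really $|C|/\min(|C^{+}|,|C^{-}|)$) is the intended one. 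Finally, your caveat about edgeless forests is well taken: as literally stated the forest clause fails for an edgeless graph, and the theorem should be read for graphs with at least one edge.
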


%%%%%%%%%%%%%%%%%%%%%%%%%%%%%%%%%%%%
%%%%%%%%%%%%%%%%%%%%%%%%%%%%%%%%%%%%
\section{Circularly ordered graphs}
\label{sec:Basic}

A \textit{circular ordering} of a set $X$ is a ternary relation $C \subseteq
X^3$ such that for any four elements $x,y,z,w\in X$ the following statements
hold:
\begin{itemize}
	\item if $(x,y,z) \in C$ then $(y,z,x) \in C$,

	\item if $(x,y,z) \in C$ then $(x,z,y) \notin C$,

	\item if $(x,y,z) \in C$  and $(x,z,w) \in C$, then $(x,y,w) \in C$, and

	\item either  $(x,y,z) \in C$ or  $(x,z,y) \in C$.
\end{itemize}

A \textit{circularly ordered graph} $G$ is an ordered pair $G = (U_G, C)$, where
$U_G$ is a graph (the \textit{underlying graph} of $G$) and $C$ is a circular
ordering of $V(U_G)$. We will often abuse nomenclature and say that a
circularly ordered graph $(G, C)$ is a circular ordering of the graph $G$.
Notice that each graph on two or three vertices defines a unique circularly
ordered graph; in Figure~\ref{fig:allcirc} we depict all circularly ordered
graphs on four vertices.

Since circularly ordered graphs are relational structures, we use the
isomorphism definition for relational structures to compare circularly ordered
graphs. In particular, the underlying graphs of two isomorphic circularly
ordered graphs are isomorphic (via graph isomorphism). We say that $H$ is an
\textit{(induced) circularly ordered subgraph} of $G$ if $U_H$ is an (induced)
subgraph of $U_G$ and $C_H$ is the restriction of $C_G$ to $V(U_H)$.  We also
say that $H$ is a \textit{spanning circularly ordered supergraph} of $G$ if $G$
is a circularly ordered subgraph of $H$, and $V(U_H) = V(U_G)$. If a circularly
ordered graph isomorphic to $H$ is an induced circularly ordered subgraph of
$G$, we will say that $G$ \textit{contains} (an induced copy of) $H$.  For a set
$\mathcal{F}$ of circularly ordered subgraphs, we say that a circularly ordered
graph $G$ \textit{avoids} $\mathcal{F}$, or is \textit{$\mathcal{F}$-free}, if
$G$ does not contain any of the circularly ordered subgraphs in $\mathcal{F}$;
if $\mathcal{F} = \{ F \}$, we will abuse notation and say that $G$ avoids $F$
or is $F$-free.   A graph $G$ \textit{admits an $\mathcal{F}$-free circular
ordering} if there exists an $\mathcal{F}$-free circularly ordered graph
$(G,C)$.

Rather than representing a circular ordering by the ternary relation itself, we
will choose two simpler representations. Given a linear order $\le$ of a set
$X$, the \textit{circular closure} of $\le$ is a circular ordering $c(\le)$
defined as follows. For every $x, y$ and $z$ in $X$ such that $x \le y \le z$,
we have $(x,y,z) \in c(\le)$, and then, take the cyclic closure of these
triples, i.e., $(x,y,z) \in c(\le)$ if either $(y,z,x) \in c(\le)$ or $(z,x,y)
\in c(\le)$. Conversely, consider a circular ordering $C$ and any element $x \in
X$. Define $\le_x$ as $x\le_x y$ for any $y\in X$, and $y \le_x z$ if $(x,y,z)
\in C$. It is not hard to observe that $c(\le_x) = C$ for any $x \in X$. So we
can always describe a circular ordering of $X$ as the circular closure of a
linear ordering on $X$.

\begin{remark}
Consider a pair of linearly ordered graphs $(G,\le_G)$ and $(H,\le_H)$, and let
$(G,C_G)$ and $(H,C_H)$ be a pair of circularly ordered graphs. Then, the
following statements hold:
\begin{itemize}
	\item if $(H,\le_H)$ is a linearly ordered subgraph of $(G,\le_G)$, then
    $(H,c(\le_H))$ is a circularly ordered subgraph of $(G,c(\le_G))$, and

	\item if $(H,C_H)$ is a circularly ordered subgraph of $(G,C_G)$ then, for
    any $u \in V_G$ there is a vertex $v \in H$ such that $(H,\le_v)$ is a
    linearly ordered subgraph of $(G,\le_u)$.
\end{itemize}
Moreover, analogous statements when induced linearly ordered subgraphs and
induced circularly subgraphs are considered also hold.
\end{remark}

Consider the unit circle $S^1 \subseteq \mathbb{R}^2$ and a finite set $X$. Let
$f\colon X\to S^1$ be an injective function. Consider the ternary relation
$C(f)$ on $X$ defined by the ordered triples $(x,y,z)$ such that when traversing
$S^1$ in a clockwise direction starting in $f(x)$ we see $f(y)$ before $f(z)$.
It is not hard to convince ourselves that $C(f)$ is a circular ordering of $X$.
Conversely, let $C_X$ be any circular ordering on $X$, choose $x_1$ in $X$ so we
have $C_X = c(\le_{x_1})$, where $\le_{x_1}$ is the linear order $x_1 \le_{x_1}
x_2 \le_{x_1} \cdots \le_{x_1} x_n$. Define the function $f\colon X\to S^1$ by
$f(x_k) = (\cos \frac{k2\pi}{n}, -\sin \frac{k2\pi}{n})$. Clearly then, $f\colon
X\to S^1$ is an injective function of $X$ into the unit circle and $C(f) = C_X$.
This representation is specially useful when picturing a circular ordering. Due
to the arguments in these paragraphs we will refer to circular orderings as
\textit{circular arrangements} as well. Similarly, we will use the verb
\textit{arrange} to mean that we are constructing or defining a circular
ordering for a set (usually the vertex set of a graph).

%%%%%%%%%%%%%%%%%%%%%
%%%%%%%%%%%%%%%%%%%%%
%%%%----------- Figura con todos los de 4
%%%%%%%%%%%%%%%%%%%%%
%%%%%%%%%%%%%%%%%%%%%

\begin{figure}[ht!]
\centering
\begin{tikzpicture}[scale=0.6]

%%%%%% RENGLON 1

\begin{scope}[xshift=-9cm, yshift=10.5cm, scale=0.5] %%%K_4
\node [vertex, label=right:{$v_1$}] (1) at (2,0){};
\node [vertex, label=below:{$v_2$}] (2) at (0,-2){};
\node [vertex, label=left:{$v_3$}] (3) at (-2,0){};
\node [vertex, label=above:{$v_4$}] (4) at (0,2){};

\draw[edge]    (1)  edge  (2);
\draw[edge]    (1)  edge  (3);
\draw[edge]    (1)  edge  (4);
\draw[edge]    (2)  edge  (3);
\draw[edge]    (4)  edge  (2);
\draw[edge]    (4)  edge  (3);

\node (L) at (2.5,-2.6){$(a)$};
\end{scope}

\begin{scope}[xshift=-3cm, yshift=10.5cm, scale=0.5] %%%I_4
\node [vertex, label=right:{$v_1$}] (1) at (2,0){};
\node [vertex, label=below:{$v_2$}] (2) at (0,-2){};
\node [vertex, label=left:{$v_3$}] (3) at (-2,0){};
\node [vertex, label=above:{$v_4$}] (4) at (0,2){};

\node (L) at (2.5,-2.6){$(b)$};
\end{scope}

\begin{scope}[xshift=3cm, yshift=10.5cm, scale=0.5] %%%Claw
\node [vertex, label=right:{$v_1$}] (1) at (2,0){};
\node [vertex, label=below:{$v_2$}] (2) at (0,-2){};
\node [vertex, label=left:{$v_3$}] (3) at (-2,0){};
\node [vertex, label=above:{$v_4$}] (4) at (0,2){};

\draw[edge]    (1)  edge  (2);
\draw[edge]    (1)  edge  (3);
\draw[edge]    (1)  edge  (4);

\node (L) at (2.5,-2.6){$(c)$};
\end{scope}

\begin{scope}[xshift=9cm, yshift=10.5cm, scale=0.5]%Triangulito
\node [vertex, label=right:{$v_1$}] (1) at (2,0){};
\node [vertex, label=below:{$v_2$}] (2) at (0,-2){};
\node [vertex, label=left:{$v_3$}] (3) at (-2,0){};
\node [vertex, label=above:{$v_4$}] (4) at (0,2){};

\draw[edge]    (1)  edge  (4);
\draw[edge]    (1)  edge  (3);
\draw[edge]    (4)  edge  (3);

\node (L) at (2.5,-2.6){$(d)$};
\end{scope}

%%%%%%%%%%%% RENGLON 2

\begin{scope}[xshift=-9cm, yshift=6.4cm, scale=0.5] %%%2K_2 parallel
\node [vertex, label=right:{$v_1$}] (1) at (2,0){};
\node [vertex, label=below:{$v_2$}] (2) at (0,-2){};
\node [vertex, label=left:{$v_3$}] (3) at (-2,0){};
\node [vertex, label=above:{$v_4$}] (4) at (0,2){};

\draw[edge]    (1)  edge  (2);
\draw[edge]    (4)  edge  (3);

\node (L) at (2.5,-2.6){$(e)$};
\end{scope}

\begin{scope}[xshift=-3cm, yshift=6.4cm, scale=0.5] %%%2K_2 crossed
\node [vertex, label=right:{$v_1$}] (1) at (2,0){};
\node [vertex, label=below:{$v_2$}] (2) at (0,-2){};
\node [vertex, label=left:{$v_3$}] (3) at (-2,0){};
\node [vertex, label=above:{$v_4$}] (4) at (0,2){};

\draw[edge]    (4)  edge  (2);
\draw[edge]    (1)  edge  (3);

\node (L) at (2.5,-2.6){$(f)$};
\end{scope}

\begin{scope}[xshift=3cm, yshift=6.4cm, scale=0.5] %%%C_4 simple
\node [vertex, label=right:{$v_1$}] (1) at (2,0){};
\node [vertex, label=below:{$v_2$}] (2) at (0,-2){};
\node [vertex, label=left:{$v_3$}] (3) at (-2,0){};
\node [vertex, label=above:{$v_4$}] (4) at (0,2){};

\draw[edge]    (1)  edge  (2);
\draw[edge]    (2)  edge  (3);
\draw[edge]    (3)  edge  (4);
\draw[edge]    (1)  edge  (4);

\node (L) at (2.5,-2.6){$(g)$};
\end{scope}

\begin{scope}[xshift=9cm, yshift=6.4cm, scale=0.5] %%%C_4 crossed
\node [vertex, label=right:{$v_1$}] (1) at (2,0){};
\node [vertex, label=below:{$v_2$}] (2) at (0,-2){};
\node [vertex, label=left:{$v_3$}] (3) at (-2,0){};
\node [vertex, label=above:{$v_4$}] (4) at (0,2){};

\draw[edge]    (4)  edge  (3);
\draw[edge]    (1)  edge  (3);
\draw[edge]    (2)  edge  (1);
\draw[edge]    (2)  edge  (4);

\node (L) at (2.5,-2.6){$(h)$};
\end{scope}

%%%%%%%%%%%% RENGLON 3

\begin{scope}[xshift=-9cm, yshift=2.3cm, scale=0.5] %%%K_2 simple
\node [vertex, label=right:{$v_1$}] (1) at (2,0){};
\node [vertex, label=below:{$v_2$}] (2) at (0,-2){};
\node [vertex, label=left:{$v_3$}] (3) at (-2,0){};
\node [vertex, label=above:{$v_4$}] (4) at (0,2){};

\draw[edge]    (4)  edge  (3);

\node (L) at (2.5,-2.6){$(i)$};
\end{scope}

\begin{scope}[xshift=-3cm, yshift=2.3cm, scale=0.5] %%%K_2 slice
\node [vertex, label=right:{$v_1$}] (1) at (2,0){};
\node [vertex, label=below:{$v_2$}] (2) at (0,-2){};
\node [vertex, label=left:{$v_3$}] (3) at (-2,0){};
\node [vertex, label=above:{$v_4$}] (4) at (0,2){};

\draw[edge]    (1)  edge  (3);

\node (L) at (2.5,-2.6){$(j)$};
\end{scope}

\begin{scope}[xshift=3cm, yshift=2.3cm, scale=0.5] %%%simple
\node [vertex, label=right:{$v_1$}] (1) at (2,0){};
\node [vertex, label=below:{$v_2$}] (2) at (0,-2){};
\node [vertex, label=left:{$v_3$}] (3) at (-2,0){};
\node [vertex, label=above:{$v_4$}] (4) at (0,2){};

\draw[edge]    (1)  edge  (2);
\draw[edge]    (2)  edge  (3);
\draw[edge]    (3)  edge  (4);
\draw[edge]    (1)  edge  (4);
\draw[edge]    (1)  edge  (3);

\node (L) at (2.5,-2.6){$(k)$};
\end{scope}

\begin{scope}[xshift=9cm, yshift=2.3cm, scale=0.5] %%%crossed
\node [vertex, label=right:{$v_1$}] (1) at (2,0){};
\node [vertex, label=below:{$v_2$}] (2) at (0,-2){};
\node [vertex, label=left:{$v_3$}] (3) at (-2,0){};
\node [vertex, label=above:{$v_4$}] (4) at (0,2){};

\draw[edge]    (4)  edge  (3);
\draw[edge]    (1)  edge  (3);
\draw[edge]    (2)  edge  (1);
\draw[edge]    (2)  edge  (4);
\draw[edge]    (1)  edge  (4);

\node (L) at (2.5,-2.6){$(l)$};
\end{scope}

%%%%%%%%%%%% RENGLON 4

\begin{scope}[xshift=-9cm, yshift=-1.8cm, scale=0.5] %%%P4 Simple
\node [vertex, label=right:{$v_1$}] (1) at (2,0){};
\node [vertex, label=below:{$v_2$}] (2) at (0,-2){};
\node [vertex, label=left:{$v_3$}] (3) at (-2,0){};
\node [vertex, label=above:{$v_4$}] (4) at (0,2){};

\draw[edge]    (1)  edge  (2);
\draw[edge]    (2)  edge  (3);
\draw[edge]    (3)  edge  (4);

\node (L) at (2.5,-2.6){$(m)$};
\end{scope}

\begin{scope}[xshift=-3cm, yshift=-1.8cm, scale=0.5] %%%P4 crossed
\node [vertex, label=right:{$v_1$}] (1) at (2,0){};
\node [vertex, label=below:{$v_2$}] (2) at (0,-2){};
\node [vertex, label=left:{$v_3$}] (3) at (-2,0){};
\node [vertex, label=above:{$v_4$}] (4) at (0,2){};

\draw[edge]    (1)  edge  (3);
\draw[edge]    (4)  edge  (1);
\draw[edge]    (2)  edge  (4);

\node (L) at (2.5,-2.6){$(n)$};
\end{scope}

\begin{scope}[xshift=3cm, yshift=-1.8cm, scale=0.5] %%%P4 Z
\node [vertex, label=right:{$v_1$}] (1) at (2,0){};
\node [vertex, label=below:{$v_2$}] (2) at (0,-2){};
\node [vertex, label=left:{$v_3$}] (3) at (-2,0){};
\node [vertex, label=above:{$v_4$}] (4) at (0,2){};

\draw[edge]    (4)  edge  (3);
\draw[edge]    (4)  edge  (2);
\draw[edge]    (2)  edge  (1);

\node (L) at (2.5,-2.6){$(o)$};
\end{scope}

\begin{scope}[xshift=9cm, yshift=-1.8cm, scale=0.5] %%%P4 Z'
\node [vertex, label=right:{$v_1$}] (1) at (2,0){};
\node [vertex, label=below:{$v_2$}] (2) at (0,-2){};
\node [vertex, label=left:{$v_3$}] (3) at (-2,0){};
\node [vertex, label=above:{$v_4$}] (4) at (0,2){};

\draw[edge]    (4)  edge  (3);
\draw[edge]    (3)  edge  (1);
\draw[edge]    (2)  edge  (1);

\node (L) at (2.5,-2.6){$(p)$};
\end{scope}

%%%%%%%%%%%% RENGLON 5

\begin{scope}[xshift=-6cm, yshift=-5.9cm, scale=0.5] %%%P3 top
\node [vertex, label=right:{$v_1$}] (1) at (2,0){};
\node [vertex, label=below:{$v_2$}] (2) at (0,-2){};
\node [vertex, label=left:{$v_3$}] (3) at (-2,0){};
\node [vertex, label=above:{$v_4$}] (4) at (0,2){};

\draw[edge]    (2)  edge  (3);
\draw[edge]    (3)  edge  (1);

\node (L) at (2.5,-2.6){$(q)$};
\end{scope}

\begin{scope}[xshift=-0cm, yshift=-5.9cm, scale=0.5] %%%P3 middle
\node [vertex, label=right:{$v_1$}] (1) at (2,0){};
\node [vertex, label=below:{$v_2$}] (2) at (0,-2){};
\node [vertex, label=left:{$v_3$}] (3) at (-2,0){};
\node [vertex, label=above:{$v_4$}] (4) at (0,2){};

\draw[edge]    (2)  edge  (3);
\draw[edge]    (3)  edge  (4);

\node (L) at (2.5,-2.6){$(r)$};
\end{scope}

\begin{scope}[xshift=6cm, yshift=-5.9cm, scale=0.5] %%%P3 bottom
\node [vertex, label=right:{$v_1$}] (1) at (2,0){};
\node [vertex, label=below:{$v_2$}] (2) at (0,-2){};
\node [vertex, label=left:{$v_3$}] (3) at (-2,0){};
\node [vertex, label=above:{$v_4$}] (4) at (0,2){};

\draw[edge]    (4)  edge  (3);
\draw[edge]    (3)  edge  (1);

\node (L) at (2.5,-2.6){$(s)$};
\end{scope}

%%%%%%%%%%%% RENGLON 6

\begin{scope}[xshift=-6cm, yshift=-10cm, scale=0.5] %%%trianguilito top
\node [vertex, label=right:{$v_1$}] (1) at (2,0){};
\node [vertex, label=below:{$v_2$}] (2) at (0,-2){};
\node [vertex, label=left:{$v_3$}] (3) at (-2,0){};
\node [vertex, label=above:{$v_4$}] (4) at (0,2){};

\draw[edge]    (2)  edge  (1);
\draw[edge]    (4)  edge  (1);
\draw[edge]    (2)  edge  (4);
\draw[edge]    (3)  edge  (4);

\node (L) at (2.5,-2.6){$(t)$};
\end{scope}

\begin{scope}[xshift=-0cm, yshift=-10cm, scale=0.5] %%%triangulito middle
\node [vertex, label=right:{$v_1$}] (1) at (2,0){};
\node [vertex, label=below:{$v_2$}] (2) at (0,-2){};
\node [vertex, label=left:{$v_3$}] (3) at (-2,0){};
\node [vertex, label=above:{$v_4$}] (4) at (0,2){};

\draw[edge]    (2)  edge  (1);
\draw[edge]    (4)  edge  (1);
\draw[edge]    (2)  edge  (4);
\draw[edge]    (3)  edge  (1);

\node (L) at (2.5,-2.6){$(u)$};
\end{scope}

\begin{scope}[xshift=6cm, yshift=-10cm, scale=0.5] %%%triangulito bottom
\node [vertex, label=right:{$v_1$}] (1) at (2,0){};
\node [vertex, label=below:{$v_2$}] (2) at (0,-2){};
\node [vertex, label=left:{$v_3$}] (3) at (-2,0){};
\node [vertex, label=above:{$v_4$}] (4) at (0,2){};

\draw[edge]    (2)  edge  (1);
\draw[edge]    (4)  edge  (1);
\draw[edge]    (2)  edge  (4);
\draw[edge]    (3)  edge  (2);

\node (L) at (2.5,-2.6){$(v)$};
\end{scope}

\end{tikzpicture}

\caption{All circularly ordered graphs on $4$ vertices. In all cases,
the circular order is the circular closure of $v_1 \le v_2 \le v_3 \le v_4$.}
\label{fig:allcirc}
\end{figure}
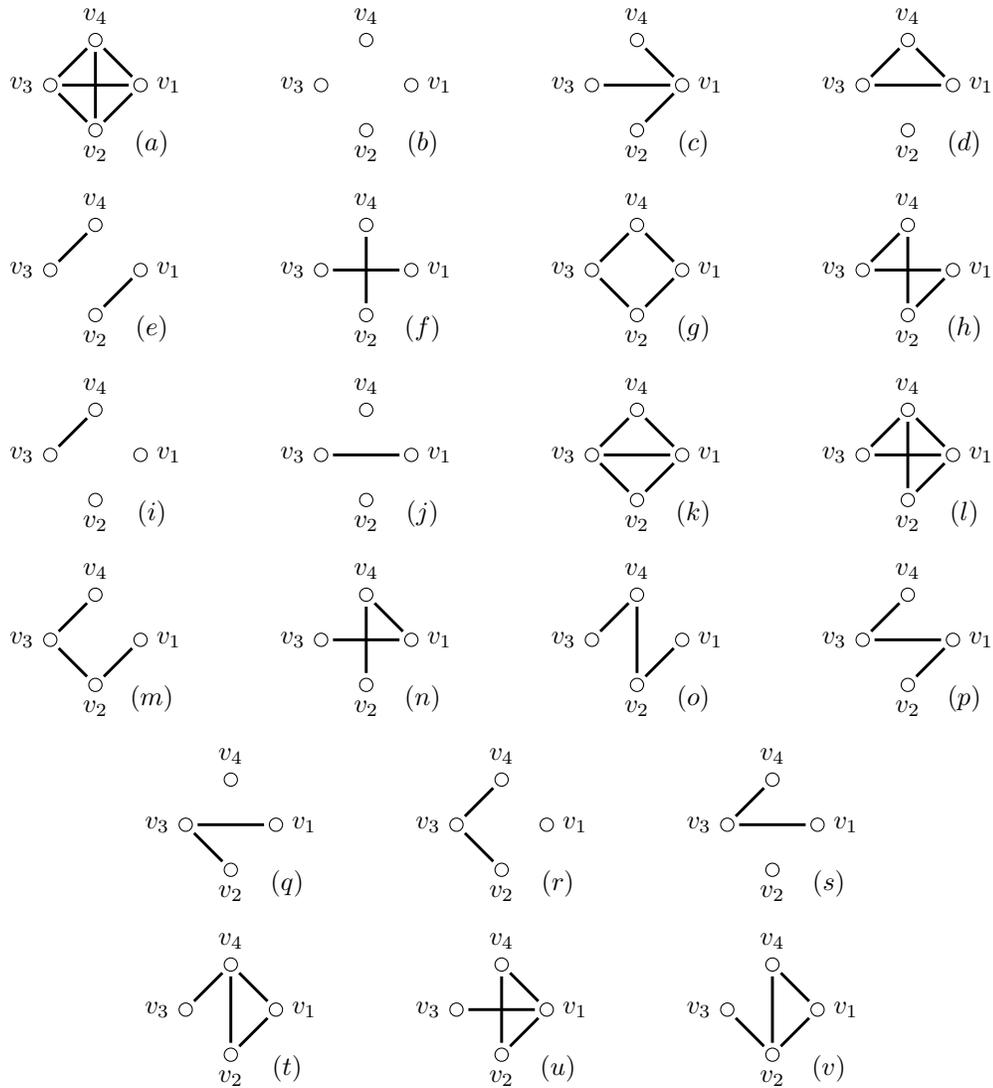
%%%%%%%%%%%%---------%%%%%%%%%------

Note that there are two natural operations on circularly ordered graphs. Let
$(G,c(\le))$ be a circularly ordered graph. The \textit{complement}
$\overline{(G,c(\le))}$ of $(G,c(\le))$ is obtained by taking the complement of
$G$ and respecting the circular order of the vertices,  i.e.,
$\overline{(G,c(\le))} = (\overline{G},c(\le))$. The \textit{reflection} or
\textit{dual} $(G,c(\le))^\ast$ of $(G,c(\le))$ is obtained by considering the
dual $\le^\ast$ of the linear order $\le$ and taking its circular closure, i.e.,
$(G,c(\le))^\ast = (G,c(\le^\ast))$. The latter operation can be interpreted
geometrically as follows. Consider the reflection $r\colon S^1 \to S^1$ over the
$y$-axis and let $(G,c(\le))$ be a circularly ordered graph. If $f \colon V(G)
\to S^1$ is an embedding such that $c(\le)$ is recovered by traversing $S^1$ in
a clockwise motion, then the circular ordering of $V(G)$ in $(G,c(\le))^\ast$ is
recovered by the embedding $r \circ f \colon V(G)\to S^1$ and traversing $S^1$
in a clockwise motion; equivalently, it is recovered by the embedding $f \colon
V(G) \to S^1$ and traversing the circle in an anti-clockwise motion.

For a positive integer $k$, a \textit{simple $k$-path}, $SP_k$, is the $k$-path
$P_k = v_1 \cdots v_k$ together with the circular ordering obtained from the
circular closure of $v_1 \le \cdots \le v_k$. Analogously, if $k \ge 3$ a
\textit{simple $k$-cycle}, $SC_k$, is the $k$-cycle $C_k = v_1 \cdots v_kv_1$
together with the circular ordering obtained from the circular closure of
$v_1\le \cdots \le v_k$. The simple path $SP_4$ and the simple cycle $SC_4$ are
depicted in Figure~\ref{fig:allcirc}, labelled $(m)$ and $(g)$, respectively.
Consider now the five cycle $C_5 = v_1 \cdots v_5 v_1$ and define
\textit{$C_5$-star} as the circularly ordered graph $(C_5,c(v_1 \le v_4 \le v_2
\le v_5 \le v_3))$. Note that the complement of $C_5$-star is $SC_5$, and the
dual of a simple path or a simple cycle is a simple path or a simple cycle,
respectively.

To use a technique analogous to the one used in \cite{feuilloleyJDM} for
depicting families of linearly ordered graphs, we introduce \textit{circularly
ordered patterns}. A \textit{pattern} consists of a set $V$ together with a set
of edges $E$ and a set of non-edges $NE$ with the restriction that $NE \cap E =
\varnothing$. A pattern $(V, E, NE)$ \textit{represents} all graphs $(V(G),
E(G))$ such that $V(G) = V$ and $E \subseteq E(G)$ but $E(G) \cap NE =
\varnothing$. So a circularly ordered pattern $(G,c(\le))$ consists of a pattern
$G$ together with a circular ordering of its vertices, and it represents all
circularly ordered graphs obtained by a graph represented by $G$ and ordering
its vertices by $c(\le)$. Given a set $\mathcal{P}$ of patterns, we say that
$\mathcal{P}$ \textit{generates} all the graphs represented by some pattern in
$\mathcal{P}$. When depicting a pattern we will use straight lines for edges and
dashed lines for no edges. For instance, in Figure~\ref{fig:CG(CA)} we depict a
single circularly ordered pattern and the family of circularly ordered graphs
that it represents. Finally, we say that a circularly ordered graph $(G,c(\le))$
\textit{avoids} a circularly ordered pattern $(H,c(\le'))$ if $(G,c(\le))$
avoids every circularly ordered graph represented by $(H,c(\le'))$.

%%%%----------- Figura con CG(CA)

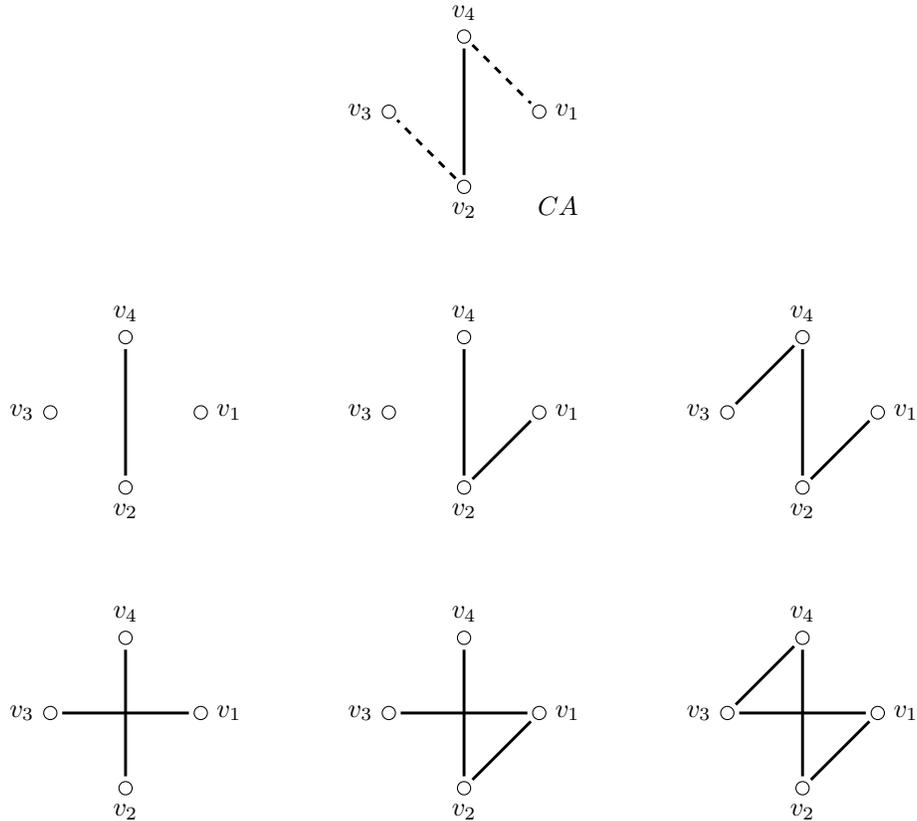
\begin{figure}[ht!]
\begin{center}

\begin{tikzpicture}

\begin{scope}[xshift=0cm, yshift=4cm, scale=0.5]
\node [vertex, label=right:{$v_1$}] (1) at (2,0){};
\node [vertex, label=below:{$v_2$}] (2) at (0,-2){};
\node [vertex, label=left:{$v_3$}] (3) at (-2,0){};
\node [vertex, label=above:{$v_4$}] (4) at (0,2){};

\draw[edge]    (4)  edge  (2);
\draw[edge]    (4)  edge [dashed] (1);
\draw[edge]    (2)  edge [dashed] (3);

\node (L) at (2.5,-2.5){$CA$};
\end{scope}

\begin{scope}[xshift=-4.5cm, yshift=0cm, scale=0.5]
\node [vertex, label=right:{$v_1$}] (1) at (2,0){};
\node [vertex, label=below:{$v_2$}] (2) at (0,-2){};
\node [vertex, label=left:{$v_3$}] (3) at (-2,0){};
\node [vertex, label=above:{$v_4$}] (4) at (0,2){};

\draw[edge]    (4)  edge  (2);
\end{scope}

\begin{scope}[xshift=0cm, yshift=0cm, scale=0.5]
\node [vertex, label=right:{$v_1$}] (1) at (2,0){};
\node [vertex, label=below:{$v_2$}] (2) at (0,-2){};
\node [vertex, label=left:{$v_3$}] (3) at (-2,0){};
\node [vertex, label=above:{$v_4$}] (4) at (0,2){};

\draw[edge]    (4)  edge  (2);
\draw[edge]    (1) edge (2);
\end{scope}

\begin{scope}[xshift=4.5cm, yshift = 0cm, scale=0.5]
\node [vertex, label=right:{$v_1$}] (1) at (2,0){};
\node [vertex, label=below:{$v_2$}] (2) at (0,-2){};
\node [vertex, label=left:{$v_3$}] (3) at (-2,0){};
\node [vertex, label=above:{$v_4$}] (4) at (0,2){};

\draw[edge]    (4)  edge  (2);
\draw[edge]    (1) edge (2);
\draw[edge]    (3) edge (4);
\end{scope}

\begin{scope}[xshift=-4.5cm, yshift=-4cm, scale=0.5]
\node [vertex, label=right:{$v_1$}] (1) at (2,0){};
\node [vertex, label=below:{$v_2$}] (2) at (0,-2){};
\node [vertex, label=left:{$v_3$}] (3) at (-2,0){};
\node [vertex, label=above:{$v_4$}] (4) at (0,2){};

\draw[edge]    (4)  edge  (2);
\draw[edge]    (1)  edge  (3);
\end{scope}

\begin{scope}[xshift=0cm, yshift=-4cm, scale=0.5]
\node [vertex, label=right:{$v_1$}] (1) at (2,0){};
\node [vertex, label=below:{$v_2$}] (2) at (0,-2){};
\node [vertex, label=left:{$v_3$}] (3) at (-2,0){};
\node [vertex, label=above:{$v_4$}] (4) at (0,2){};

\draw[edge]    (4)  edge  (2);
\draw[edge]    (1) edge (2);
\draw[edge]    (1)  edge  (3);

\end{scope}

\begin{scope}[xshift=4.5cm, yshift = -4cm, scale=0.5]
\node [vertex, label=right:{$v_1$}] (1) at (2,0){};
\node [vertex, label=below:{$v_2$}] (2) at (0,-2){};
\node [vertex, label=left:{$v_3$}] (3) at (-2,0){};
\node [vertex, label=above:{$v_4$}] (4) at (0,2){};

\draw[edge]    (4)  edge  (2);
\draw[edge]    (1) edge (2);
\draw[edge]    (3) edge (4);
\draw[edge]    (1)  edge  (3);
\end{scope}

\end{tikzpicture}

\caption{On top, the circularly ordered pattern $CA$. On the two bottom rows,
the family of circularly ordered graphs represented by $CA$.}
\label{fig:CG(CA)}
\end{center}
\end{figure}
%%%%%%%%%%%%---------%%%%%%%%%------

As a consequence of a result due to Tucker \cite{tuckerBAMS76}, we describe a
circularly ordered pattern $CA$ (top of Figure \ref{fig:CG(CA)}), such that the
family of circular-arc graphs is the class of graphs that admit a $CA$-free
circular ordering.

\begin{theorem}\cite{tuckerBAMS76}
\label{thm:tucker}
If $G$ is a graph, then $G$ is a circular-arc graph if and only if the vertices
of $G$ can be arranged in a circular ordering $v_1,\dots,v_n$ such that, for
$i<j$, if $v_iv_j\in E(G)$ then either $v_{i+1},\dots,v_j\in N(v_i)$ or
$v_{j+1},\dots, v_i\in N(v_j)$.
\end{theorem}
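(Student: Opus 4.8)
The plan is to argue directly from the definition of a circular-arc graph as the intersection graph of a family of arcs on the unit circle $S^1$, proving the two implications separately.

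For the forward implication, suppose $G$ has an arc representation $\{A_v : v \in V(G)\}$; after a small perturbation (slightly enlarging arcs) I may assume that all $2n$ arc-endpoints are distinct and that no arc is a single point or the whole circle. Each arc $A_v$ is the clockwise arc from its counterclockwise endpoint $\ell_v$ to its clockwise endpoint $r_v$, and I would arrange the vertices $v_1, \dots, v_n$ in the clockwise cyclic order of the points $\ell_v$ on $S^1$ — this is precisely a circular arrangement. The one elementary fact to record is: if $A$ and $A'$ are proper arcs with $A \cap A' \neq \varnothing$, then $\ell_{A'} \in A$ or $\ell_A \in A'$ (a short case check on the cyclic position of the four endpoints). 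Given an edge $v_iv_j$ with $i<j$, apply this to $A_{v_i}, A_{v_j}$: if $\ell_{v_j} \in A_{v_i}$, then the clockwise subarc of $A_{v_i}$ from $\ell_{v_i}$ to $\ell_{v_j}$ lies in $A_{v_i}$, every vertex $v_k$ with $i < k \le j$ has $\ell_{v_k}$ on this subarc, hence $\ell_{v_k} \in A_{v_i} \cap A_{v_k}$, so $v_k \in N(v_i)$, giving the first alternative; the case $\ell_{v_i} \in A_{v_j}$ gives the second alternative symmetrically (indices wrapping around), and universal vertices trivially satisfy an alternative.

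For the reverse implication, suppose $v_1, \dots, v_n$ is a circular arrangement with the stated property, and place distinct points $p_1, \dots, p_n$ on $S^1$ in this clockwise cyclic order. For each edge, the hypothesis designates at least one endpoint $v_c$ as a \emph{valid covering vertex} of that edge: the clockwise arc of $S^1$ from $p_c$ to the other endpoint has all its interior points among $\{p_\ell : v_\ell \in N(v_c)\}$. For each vertex $v_i$ let $A_i$ be the clockwise arc from $p_i$ to $p_{m(i)}$, where $v_{m(i)}$ is the vertex farthest from $p_i$ in the clockwise direction such that $v_i$ is a valid covering vertex of $v_iv_{m(i)}$ (with $A_i$ a very short arc centred at $p_i$ if $v_i$ is isolated, and $A_i = S^1$ if $v_i$ is universal). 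The key observation is that the clockwise arcs from $p_i$ associated with the edges for which $v_i$ is a valid covering vertex are nested, so $A_i$ is the largest of them and hence also has all its interior points in $\{p_\ell : v_\ell \in N(v_i)\}$.

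It then remains to check that $v_iv_j \in E(G)$ if and only if $A_i \cap A_j \neq \varnothing$. If $v_iv_j \in E(G)$ and, say, $v_i$ is a valid covering vertex of this edge, then $p_j$ lies on the clockwise arc from $p_i$ to $p_{m(i)}$, so $p_j \in A_i \cap A_j$. Conversely, if $A_i \cap A_j \neq \varnothing$ then a short cyclic case analysis shows that one of the two arcs contains the starting point of the other, say $p_j \in A_i$; since every point of $A_i$ other than $p_i$ is either interior to $A_i$ (forcing $v_j \in N(v_i)$) or equal to $p_{m(i)}$ (and $v_iv_{m(i)}$ is an edge), we conclude $v_iv_j \in E(G)$. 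I expect the reverse direction to be the main obstacle — concretely, pinning down the endpoints of the arcs $A_i$ so that no spurious intersections are created, which is exactly where the nestedness of the covered clockwise arcs, the careful treatment of universal and isolated vertices, and a final perturbation to remove coincident endpoints all come into play.
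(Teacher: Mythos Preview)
The paper does not prove this statement: it is quoted as Tucker's theorem with the citation \cite{tuckerBAMS76} and used as a black box to derive Proposition~\ref{prop:CA-CG}. So there is no in-paper proof to compare your attempt against; you have supplied a direct proof where the authors simply invoke the literature.

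Your argument is essentially the standard one and is sound in outline, but there is one small gap in the reverse implication. You define $m(i)$ only when $v_i$ is a valid covering vertex of \emph{some} incident edge, and you handle isolated and universal vertices separately; however, a non-isolated, non-universal vertex can fail to be a valid covering vertex of every incident edge. For instance, with $n=4$, edges $v_1v_2$ and $v_1v_4$ only, the ordering $v_1,v_2,v_3,v_4$ satisfies the hypothesis, yet $v_2$ is never a valid covering vertex (the clockwise arc from $p_2$ to $p_1$ passes through $p_3,p_4$, and $v_3,v_4\notin N(v_2)$). The fix is immediate --- give such a vertex the same ``very short arc centred at $p_i$'' that you give to isolated vertices --- and then your verification that $A_i\cap A_j\neq\varnothing$ iff $v_iv_j\in E(G)$ goes through unchanged, since whenever $v_iv_j$ is an edge the \emph{other} endpoint will be a valid covering vertex and its arc will reach $p_i$. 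With that adjustment your proof is complete.
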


Denote by $CP$ the property of circularly ordered graphs described in
Theorem~\ref{thm:tucker}.

\begin{proposition}
\label{prop:CA-CG}
A graph $G$ admits a $CA$-free circular ordering if and only if it is a
circular-arc graph.
\end{proposition}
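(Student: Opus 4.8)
The plan is to prove the following sharper statement, from which the proposition is immediate: for every circular ordering $C$ of $V(G)$, the circularly ordered graph $(G,C)$ is $CA$-free if and only if $(G,C)$ has property $CP$. Granting this, $G$ admits a $CA$-free circular ordering if and only if it admits a circular ordering with property $CP$, which by Theorem~\ref{thm:tucker} happens exactly when $G$ is a circular-arc graph.

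To set things up, fix $C$ and write it as $c(\le)$ with $v_1 \le \cdots \le v_n$. I would first note that property $CP$ does not depend on this choice of starting vertex, since for an edge $v_iv_j$ the lists $v_{i+1},\dots,v_j$ and $v_{j+1},\dots,v_i$ are simply the two arcs of $C$ joining $v_i$ and $v_j$, so $CP$ is genuinely a property of $C$. Next I would record the meaning of ``$(G,C)$ contains $CA$'': since the pattern $CA$ of Figure~\ref{fig:CG(CA)} has cyclic vertex order $v_1,v_2,v_3,v_4$, distinguished edge $v_2v_4$, and distinguished non-edges $v_1v_4$ and $v_2v_3$ (the remaining pairs being unconstrained), $(G,C)$ contains $CA$ precisely when there are four vertices $a,b,c,d$ occurring in $C$ in this cyclic order with $ac\in E(G)$ and $ab,cd\notin E(G)$; the correspondence with the pattern is $(v_2,v_3,v_4,v_1)\mapsto(a,b,c,d)$.

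I would then prove both directions by contraposition. If $(G,C)$ fails $CP$, there is an edge $v_iv_j$ with $i<j$ such that some $v_k$ with $i<k<j$ has $v_iv_k\notin E(G)$ and some $v_\ell$ on the complementary arc has $v_jv_\ell\notin E(G)$ (the witnesses are interior to the arcs because $v_iv_j\in E(G)$); the four vertices $v_i,v_k,v_j,v_\ell$ are pairwise distinct and occur in $C$ in this cyclic order, so they exhibit a copy of $CA$ with $(a,b,c,d)=(v_i,v_k,v_j,v_\ell)$, and hence $(G,C)$ contains $CA$. Conversely, if $(G,C)$ contains $CA$ on vertices $a,b,c,d$ as above, re-enumerate $C$ starting at $a$; then $a,b,c,d$ receive increasing indices $i<k<j<\ell$, and the edge $v_iv_j=ac$ violates Tucker's condition, because $v_k=b$ is interior to the arc between $v_i$ and $v_j$ with $v_iv_k\notin E(G)$, while $v_\ell=d$ lies on the complementary arc with $v_jv_\ell\notin E(G)$. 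Thus $(G,C)$ is $CA$-free if and only if it satisfies $CP$, completing the argument.

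I expect the only point requiring care to be the bookkeeping in matching the cyclic placement of the distinguished edge and non-edges of $CA$ with the two arcs appearing in Tucker's condition, together with the (easy) verification that the four witnessing vertices are pairwise distinct -- which holds because one non-neighbour is interior to one arc between the endpoints of the edge and the other is interior to the complementary arc. Everything else is a direct dictionary translation between the pattern formalism and Tucker's arrangement condition.
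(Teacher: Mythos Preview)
Your proposal is correct and follows the same approach as the paper: both arguments establish that a fixed circular ordering satisfies Tucker's property $CP$ precisely when it avoids the pattern $CA$, and then invoke Theorem~\ref{thm:tucker}. The paper compresses this into a single observation---that failure of $CP$ is witnessed by four cyclically placed vertices $v_i,v_k,v_j,v_l$ with $v_iv_j\in E$ and $v_iv_k,v_jv_l\notin E$, which is exactly the $CA$ configuration---whereas you spell out the rotation-invariance of $CP$ and both contrapositives explicitly; the content is the same.
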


\begin{proof}
Note that a circular ordering $c(\le)$ of $V(G)$ fails to satisfy $CP$ if and
only if there are four vertices $v_i \le v_k \le v_j \le v_l \le v_i$ such that
$v_iv_j \in E(G)$, and $v_iv_k, v_jv_l \notin E(G)$. Thus, $(G,c(\le))$
satisfies $CP$ if and only if every of its induced circularly ordered subgraphs
on four vertices satisfy $CP$. The statement of this proposition follows since
the family represented by $CA$ corresponds to those circularly ordered graphs on
four  vertices that do not satisfy $CP$.
\end{proof}

We denote by $cr$ the circularly ordered graph labelled $(f)$ in
Figure~\ref{fig:allcirc}, and by $CR$ the set of circularly ordered spanning
supergraphs of $cr$.

\begin{proposition}
\label{prop:outerplanar}
A graph $G$ is an outerplanar graph if and only if it admits a $CR$-free
circular ordering.
\end{proposition}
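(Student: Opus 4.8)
The plan is to translate the condition ``$(G,c(\le))$ is $CR$-free'' into a statement about crossing chords. Unravelling the definitions, a member of $CR$ is a circularly ordered graph on four vertices $u_1,u_2,u_3,u_4$ (occurring in this circular order) whose edge set contains the two ``diagonals'' $u_1u_3$ and $u_2u_4$, together with an arbitrary subset of the four ``sides''. Since an isomorphism of circularly ordered graphs on four vertices is just a rotation of the cyclic order, a circularly ordered graph $(G,c(\le))$ avoids $CR$ if and only if there do not exist four vertices $a,b,c,d$ of $G$ occurring in this cyclic order with $ac,bd\in E(G)$. Now place the vertices on the unit circle via an embedding $f\colon V(G)\to S^1$ recovering $c(\le)$ (as described in Section~\ref{sec:Basic}) and draw every edge of $G$ as a straight chord; no three of the points are collinear, so distinct chords meet only at shared endpoints or in a single interior point, and two chords with four distinct endpoints meet in an interior point exactly when their endpoints alternate around the circle. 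Thus $(G,c(\le))$ is $CR$-free \emph{if and only if} this chord diagram has no two crossing chords.

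For the direction ``$CR$-free $\Rightarrow$ outerplanar'', I would take a $CR$-free circular ordering $c(\le)$ of $V(G)$ and the chord drawing just described. By the equivalence above no two chords cross, so this is a planar drawing of $G$ inside the closed disk with every vertex on the bounding circle; hence all vertices lie on the boundary of the outer face, and $G$ is outerplanar.

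For the converse ``$G$ outerplanar $\Rightarrow$ $G$ admits a $CR$-free circular ordering'', note first that if $|V(G)|\le 3$ the unique circular ordering of $V(G)$ has no four vertices and so vacuously avoids $CR$ (and every graph on at most three vertices is outerplanar). If $n=|V(G)|\ge 4$, I would use the classical structure of outerplanar graphs: $G$ is a spanning subgraph of a maximal outerplanar graph $G'$ on the same vertex set (add edges greedily while preserving outerplanarity), and a maximal outerplanar graph on $n\ge 3$ vertices is a triangulation of an $n$-gon --- its outer face is bounded by a Hamiltonian cycle $w_1w_2\cdots w_nw_1$, and each remaining edge is one of the pairwise non-crossing diagonals of this polygon. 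Then the circular ordering $c(w_1\le w_2\le\cdots\le w_n)$ has chord diagram exactly equal to the triangulated polygon, so no two chords cross, so $(G',c(\le))$ avoids $CR$; since $G$ is a spanning subgraph of $G'$, the induced circularly ordered graph $(G,c(\le))$ also avoids $CR$.

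I expect the only delicate points to be bookkeeping rather than anything deep: carefully matching the formal notion ``avoids $CR$'' with the geometric statement ``the chord diagram has no crossing'' (in particular checking that a crossing forces four \emph{distinct} vertices in alternating cyclic order, so that shared-endpoint chords are harmless), and invoking the standard facts that every outerplanar graph embeds into a maximal outerplanar graph on the same vertices and that maximal outerplanar graphs are polygon triangulations. The single genuinely conceptual step is the translation between $CR$-freeness and crossing-free chord diagrams; everything else is routine.
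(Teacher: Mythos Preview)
Your proposal is correct and follows essentially the same route as the paper: both directions hinge on the equivalence between $CR$-freeness and the absence of crossing chords in the circular drawing, and the converse is obtained by passing to an edge-maximal outerplanar supergraph whose outer boundary is a Hamiltonian cycle, then reading off the circular ordering from that cycle. The only cosmetic difference is that the paper adds edges until the graph is \emph{biconnected} outerplanar (enough to guarantee a Hamiltonian outer cycle), whereas you go all the way to a \emph{maximal} outerplanar graph (a polygon triangulation); either choice works for the same reason.
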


\begin{proof}
Suppose that a graph $G$ admits a circular ordering $C_G$ of $V(G)$ that avoids
$CR$. Represent the circular ordering $C_G$ by an injective function $f \colon
V(G)\to S^1$. Consider the embedding of $G$ into $\mathbb{R}^2$
obtained from $f$ and representing every edge $xy$ by the segment joining $f(x)$
and $f(y)$. Since $(G,C_G)$ is $CR$-free, then the previously mentioned
embedding has no crossing edges and thus is a planar embedding of $G$. Moreover,
as all edges are represented by a line segment in the interior of $S^1$ and all
vertices are represented by a point on $S^1$, then the embedding is an
outerplanar embedding of $G$. Thus, $G$ is an outerplanar graph.

On the other hand, let $G$ be an outerplanar graph and $G'$ be an outerplanar
embedding of the graph resulting of adding edges to $G$ until it is a
biconnected outerplanar graph. If $C$ is a hamiltonian cycle of $G'$, then a
circular ordering $C_G$ of $V(G)$ is obtained by traversing $C$ in a clockwise
motion. The fact that $(G,C_G)$ is a $CR$-free circular ordering of $G$
follows from the definition of $CR$ and the fact that $G'$ is an outerplanar
embedding of a supergraph of $G$.
\end{proof}

%%%%%%%%%%%%%%%%%%%%%
%%%%-----------              Ordered patterns
%%%%%%%%%%%%%%%%%%%%%

\section{Circular arrangements and linearly ordered patterns}
\label{sec:ordpat}

As noted by Habib and Feuilloley \cite{feuilloleyJDM}, an obvious line of
research in the context of forbidden linearly ordered graphs, is to study
hereditary properties characterized by forbidden sets of linearly ordered graphs
on four vertices or more. To this end, we notice that for any hereditary
property described by a finite set of forbidden circularly ordered graphs, there
is a set of linearly ordered graphs (with the same size of vertex sets) that
describes the same property. Let $c$ be the function that maps a linearly
ordered graph $(G,\le)$ to the circularly ordered graph $(G,c(\le))$, i.e.,
$c(G,\le) = (G,c(\le))$.   The function $c$ can be naturally extended to take
linearly ordered patterns as an argument if we think a linearly ordered pattern
as the set of linearly ordered graphs that it represents. As the following
observation shows, the inverse image of a set of circularly ordered graphs $F$
under $c$, directly relates the families of graphs admitting an $F$-free
circular ordering and those admitting an $(c^{-1}[F])$-circular ordering.  For
this reason, it is convenient to define the ``linearizing operator'' $L$ for a
set of circularly ordered graphs $F$ as $L(F) = c^{-1}[F]$.   Again, $L$ can
take a circularly ordered pattern as an argument if we think it as the set of
circularly ordered graphs it represents.

\begin{observation}
\label{obs:circtolinear}
Let $F$ be a set of circularly ordered graphs and let $\mathcal{P}$ be the class
of graphs that admit an $F$-free circular ordering. Then, $\mathcal{P}$ is the
class of graphs that admit a $L(F)$-free linear ordering.
\end{observation}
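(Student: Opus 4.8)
The plan is to unpack the definitions of $L(F)$ and of ``$F$-free circular ordering'' and show the two classes coincide by proving the equivalence for a fixed graph $G$: $G$ admits an $F$-free circular ordering if and only if $G$ admits an $L(F)$-free linear ordering. The key technical fact I would lean on is the relationship between linear orderings and their circular closures recorded just before the Remark in the excerpt: every circular ordering $C$ of a finite set $X$ equals $c(\le)$ for some linear ordering $\le$ of $X$ (indeed, $C = c(\le_x)$ for any basepoint $x$), and conversely $c(\le)$ is a circular ordering for every linear $\le$. Thus the map $c$ from linear orderings of $V(G)$ to circular orderings of $V(G)$ is surjective, which is what makes the two quantifiers ``there exists a circular ordering'' and ``there exists a linear ordering'' interchangeable.

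The forward direction: suppose $\le$ is an $L(F)$-free linear ordering of $G$, and set $C = c(\le)$; I claim $(G, C)$ is $F$-free. If not, some $(H, C_H) \in F$ is an induced circularly ordered subgraph of $(G,C)$. Here $H$ is an induced subgraph of $G$ and $C_H$ is the restriction of $C$ to $V(H)$. Restricting $\le$ to $V(H)$ gives a linear ordering $\le_H$ of $H$ with $c(\le_H) = C_H$ (restriction commutes with circular closure, which is the content of the first bullet of the Remark). Hence $(H, \le_H)$ is a linearly ordered graph with $c(H,\le_H) = (H,C_H) \in F$, so $(H,\le_H) \in c^{-1}[F] = L(F)$, and it is an induced linearly ordered subgraph of $(G,\le)$ — contradicting $L(F)$-freeness. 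The reverse direction: suppose $C$ is an $F$-free circular ordering of $G$; pick any linear $\le$ with $c(\le) = C$ (such exists by the preceding discussion). If $(H,\le_H) \in L(F)$ were an induced linearly ordered subgraph of $(G,\le)$, then by the Remark $(H, c(\le_H))$ is an induced circularly ordered subgraph of $(G, c(\le)) = (G,C)$; and $c(\le_H) = C_H$ the restriction of $C$, while $c(H,\le_H) = (H,c(\le_H)) \in F$ since $(H,\le_H) \in c^{-1}[F]$. So $(G,C)$ contains a member of $F$, a contradiction.

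The only genuine point requiring care — and the one I would spell out as the ``main obstacle,'' though it is really bookkeeping rather than a difficulty — is that circular closure commutes with restriction to a subset: if $\le$ is a linear order on $X$ and $Y \subseteq X$, then $c(\le\!\restriction_Y) = c(\le)\!\restriction_Y$ (restriction of the ternary relation). This is exactly the content of the first bullet of the Remark, applied with $H$ an induced subgraph on vertex set $Y$, so I would simply cite the Remark. One should also note that $L(F)$ is, as the paper says, allowed to be infinite in general (a circularly ordered graph on $n$ vertices can have up to $n$ non-isomorphic linearizations), but finiteness plays no role in this observation; it matters only for the subsequent claim about finite describability. I would close by remarking that the induced-subgraph bookkeeping is identical in both directions, so the proof is genuinely just ``apply $c$ and its surjectivity, then translate containments through the Remark.''
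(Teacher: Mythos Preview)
Your proof is correct and follows essentially the same approach as the paper's: both directions amount to passing between a linear ordering $\le$ and its circular closure $c(\le)$, using that $c$ is surjective onto circular orderings, and then translating induced-subgraph containments through the Remark. The paper's proof is simply a two-line sketch of exactly this argument, whereas you have spelled out the contrapositive in each direction and explicitly flagged the ``restriction commutes with circular closure'' step.
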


\begin{proof}
Recall that every circular ordering can be described as the circular closure of
some linear ordering. So let $(G,c(\le))$ be an $F$-free circular ordering of a
graph $G$. Then, $(G,\le)$ is a $L(F)$-free linear ordering of $G$.
Conversely, if $(G,\le)$ is a $L(F)$-free linear ordering of $G$ then
$(G,c(\le))$ is an $F$-free circular ordering of $G$.
\end{proof}

In particular, since we already showed that outerplanar graphs can be naturally
described by forbidden circularly ordered graphs (Proposition%
~\ref{prop:outerplanar}) by Observation~\ref{obs:circtolinear}, we recover an
observation mentioned in \cite{feuilloleyJDM} that states that there is a finite
set of linearly ordered patterns that characterizes outerplanar graphs. The
class of circular-arc graphs is also described by finitely many forbidden
circularly ordered graphs (Proposition~\ref{prop:CA-CG}) so there is a set of
linearly ordered patterns on four vertices $F_C$ such that the class of graphs
that admit an $F_C$-free linear ordering is the class of circular-arc graphs.
This remark positively answers a question posed by Damaschke: is there a finite
set of linearly ordered graphs that describes the class of circular-arc graphs?
\cite{damaschkeTCGT1990}. To precisely determine $F_C$, let $L(CA)$ be the set
of ordered graphs $(G,\le)$ such that $(G,c(\le))$ is represented by the
circularly ordered pattern $CA$. We depict a pair of linearly ordered patterns
that generate $L(CA)$ in Figure~\ref{fig:LCA}.

%%%%----------- TABLA CON L(CA)

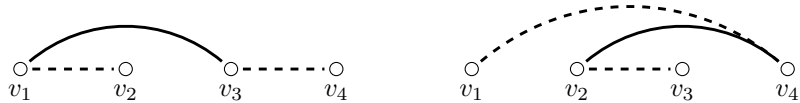
\begin{figure}[ht!]
\begin{center}

\begin{tikzpicture}

\begin{scope}[xshift=-3cm, yshift=0cm, scale=0.7]%P_4
\node [vertex, label=below:{$v_1$}] (1) at (-3,0){};
\node [vertex, label=below:{$v_2$}] (2) at (-1,0){};
\node [vertex, label=below:{$v_3$}] (3) at (1,0){};
\node [vertex, label=below:{$v_4$}] (4) at (3,0){};

\draw[edge]    (1)  edge [bend right=-40]  (3);
\draw[edge]    (1)  edge [dashed] (2);
\draw[edge]    (3)  edge [dashed] (4);
\end{scope}

\begin{scope}[xshift=3cm, yshift = 0cm, scale=0.7]%P2
\node [vertex, label=below:{$v_1$}] (1) at (-3,0){};
\node [vertex, label=below:{$v_2$}] (2) at (-1,0){};
\node [vertex, label=below:{$v_3$}] (3) at (1,0){};
\node [vertex, label=below:{$v_4$}] (4) at (3,0){};

\draw[edge]    (4)  edge [bend right=40]  (2);
\draw[edge]    (2)  edge [dashed]  (3);
\draw[edge]    (1)  edge [bend right=-40, dashed] (4);
\end{scope}

\end{tikzpicture}

\caption{A pair of linearly ordered patterns that generate $L(CA)$.}
\label{fig:LCA}
\end{center}
\end{figure}
%%%%%%%%%%%%---------%%%%%%%%%------

\begin{proposition}
A graph $G$ admits a $L(CA)$-free linear ordering if and only if $G$ is a
circular-arc
graph.
\end{proposition}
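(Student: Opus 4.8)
The plan is to deduce this proposition directly from the combination of Observation~\ref{obs:circtolinear} and Proposition~\ref{prop:CA-CG}, via the fact that the two linearly ordered patterns in Figure~\ref{fig:LCA} generate exactly the set $L(CA)$. The chain of reasoning is short: by Proposition~\ref{prop:CA-CG}, a graph $G$ is a circular-arc graph if and only if it admits a $CA$-free circular ordering; by Observation~\ref{obs:circtolinear} applied to $F = \{\,\text{graphs represented by } CA\,\}$, the class of graphs admitting a $CA$-free circular ordering is precisely the class of graphs admitting an $L(CA)$-free linear ordering. So it remains only to verify that ``$L(CA)$-free'' means the same thing as ``free of the two patterns in Figure~\ref{fig:LCA}''.

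First I would recall the definition of $L(CA)$: it is the set of linearly ordered graphs $(G,\le)$ such that $(G,c(\le))$ is represented by the circularly ordered pattern $CA$, i.e.\ $(G,c(\le))$ has exactly four vertices, the edge $v_2v_4$ (using the labelling of Figure~\ref{fig:CG(CA)}) is present, and the non-edges $v_1v_4$ and $v_2v_3$ are absent, while $c(\le)$ is the circular closure of $v_1 \le v_2 \le v_3 \le v_4$. The key point is that a single circular closure $c(\le_G)$ of a four-element linear order $\le_G$ arises from $c(v_1\le v_2\le v_3\le v_4)$ in several ways — namely, the four ``rotations'' of the linear order $v_1<v_2<v_3<v_4$, together with whichever of these rotations is compatible with $\le_G$ being a restriction of a larger order. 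Concretely, when we ask for which linear orders $\le_G$ on a four-set is $(G,c(\le_G))$ represented by $CA$, we must let the distinguished ``edge at distance $2$'' and the ``two forbidden non-edges at distance $1$'' in $CA$ land in every rotational position relative to $\le_G$. Working this out, the constraint $v_2v_4 \in E$, $v_1v_4,v_2v_3 \notin E$ translates, across the four rotations, into: (i) a pattern with a ``distance-2 chord'' $v_1v_3$ present and the distance-one non-edges $v_1v_2$, $v_3v_4$ absent (the left pattern of Figure~\ref{fig:LCA}); and (ii) a pattern with chord $v_2v_4$ present and the non-edges $v_2v_3$, and the ``wrap-around'' non-edge $v_1v_4$ absent (the right pattern). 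The remaining two rotations give patterns isomorphic, as linearly ordered patterns, to these two, so no further patterns are needed.

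The step I expect to be the main (though still routine) obstacle is exactly this rotation bookkeeping: one has to be careful that the circular closure operation $c$ identifies precisely the four cyclic rotations of a linear order and nothing else, and then check case-by-case that each of the four rotations, when one records which pairs are at circular distance $2$ and which are at circular distance $1$, yields one of the two depicted linearly ordered patterns (up to the identification of generated families). I would organize this as: fix the cyclic class of orderings whose closure is a given $C$; enumerate the four linear representatives; for each, read off from $CA$ the induced edge/non-edge constraints; observe that two of the four coincide with the left pattern of Figure~\ref{fig:LCA} and the other two with the right pattern. Then conclude: $(G,\le)$ contains an induced copy of some member of $L(CA)$ iff $(G,\le)$ is not free of the two patterns of Figure~\ref{fig:LCA}, which by the displayed chain of equivalences holds iff $(G,c(\le))$ is not $CA$-free, iff (quantifying over $\le$) $G$ fails to admit an $L(CA)$-free linear ordering exactly when it fails to be a circular-arc graph. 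This completes the proof.
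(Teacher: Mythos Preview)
Your proposal is correct and follows exactly the approach the paper intends: the proposition is stated in the paper without a separate proof, as an immediate consequence of Proposition~\ref{prop:CA-CG} and Observation~\ref{obs:circtolinear} together with the claim (made just before the proposition) that the two patterns in Figure~\ref{fig:LCA} generate $L(CA)$. Your rotation bookkeeping correctly verifies that last claim; indeed the four cyclic shifts of $v_1<v_2<v_3<v_4$ yield, in positional coordinates, exactly the left pattern (twice) and the right pattern (twice), so nothing is missing.
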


Note that in this case, describing the class of circular-arc graphs by forbidden
circular arrangements yields a simpler set of forbidden structures than
describing them by forbidden linearly ordered patterns.

Observation~\ref{obs:circtolinear} gives us the opportunity to propose what we
think is a really interesting question: Is its ``converse'' true? This is, given
a finite set of linearly ordered patterns, $F$, is there a finite set of
circular ordered graphs $F'$ such that the class of graphs that admit an
$F$-free linear ordering is precisely the class of graphs that admit an
$F'$-free circular ordering? We do not have an example where it does not hold,
but the existence of one seems to be likely, so it would be interesting to see
one. In contrast, it is not hard to find examples of some nice classes where
the converse holds, we now present a handful.

Linear forests, caterpillar forests and forests are examples of graph classes
characterized by a set of forbidden linearly ordered patterns on three vertices
\cite{feuilloleyJDM}.

Let $LF$ be the set of circularly ordered graphs that consists of the simple
triangle, both circular orderings of $C_4$, the simple $P_4$, the
\textit{crossed $P_4$}, and the unique circular ordering of the claw. We
illustrate these graphs in Figure~\ref{fig:LF}.

%%%%%%%%%%%%%%%%%%%%%
%%%%%%%%%%%%%%%%%%%%%

\begin{figure}[ht!]
\centering

\begin{tikzpicture}

%%%%%% RENGLON 1

\begin{scope}[xshift=-4.5cm, yshift=2cm, scale=0.5] %%%K_3
\node [vertex, label=right:{$v_1$}] (1) at (2,0){};
\node [vertex, label=below:{$v_2$}] (2) at (0,-2){};
\node [vertex, label=left:{$v_3$}] (3) at (-2,0){};

\draw[edge]    (1)  edge  (2);
\draw[edge]    (1)  edge  (3);
\draw[edge]    (2)  edge  (3);
\end{scope}

\begin{scope}[xshift=0cm, yshift=2cm, scale=0.5] %%%C4 simple
\node [vertex, label=right:{$v_1$}] (1) at (2,0){};
\node [vertex, label=below:{$v_2$}] (2) at (0,-2){};
\node [vertex, label=left:{$v_3$}] (3) at (-2,0){};
\node [vertex, label=above:{$v_4$}] (4) at (0,2){};

\draw[edge]    (1)  edge  (2);
\draw[edge]    (2)  edge  (3);
\draw[edge]    (3)  edge  (4);
\draw[edge]    (1)  edge  (4);
\end{scope}

\begin{scope}[xshift=4.5cm, yshift=2cm, scale=0.5] %%%C4 crossed
\node [vertex, label=right:{$v_1$}] (1) at (2,0){};
\node [vertex, label=below:{$v_2$}] (2) at (0,-2){};
\node [vertex, label=left:{$v_3$}] (3) at (-2,0){};
\node [vertex, label=above:{$v_4$}] (4) at (0,2){};

\draw[edge]    (4)  edge  (3);
\draw[edge]    (1)  edge  (3);
\draw[edge]    (2)  edge  (1);
\draw[edge]    (2)  edge  (4);
\end{scope}

%%%%%%%%%%%% RENGLON 2

\begin{scope}[xshift=-4.5cm, yshift=-2cm, scale=0.5]%P_4 C
\node [vertex, label=right:{$v_1$}] (1) at (2,0){};
\node [vertex, label=below:{$v_2$}] (2) at (0,-2){};
\node [vertex, label=left:{$v_3$}] (3) at (-2,0){};
\node [vertex, label=above:{$v_4$}] (4) at (0,2){};

\draw[edge]    (4)  edge  (3);
\draw[edge]    (3)  edge  (2);
\draw[edge]    (2)  edge  (1);
\end{scope}

\begin{scope}[xshift=0cm, yshift=-2cm, scale=0.5] %%%P4 crossed
\node [vertex, label=right:{$v_1$}] (1) at (2,0){};
\node [vertex, label=below:{$v_2$}] (2) at (0,-2){};
\node [vertex, label=left:{$v_3$}] (3) at (-2,0){};
\node [vertex, label=above:{$v_4$}] (4) at (0,2){};

\draw[edge]    (1)  edge  (3);
\draw[edge]    (4)  edge  (1);
\draw[edge]    (2)  edge  (4);
\end{scope}

\begin{scope}[xshift=4.5cm, yshift=-2cm, scale=0.5] %%%Claw
\node [vertex, label=right:{$v_1$}] (1) at (2,0){};
\node [vertex, label=below:{$v_2$}] (2) at (0,-2){};
\node [vertex, label=left:{$v_3$}] (3) at (-2,0){};
\node [vertex, label=above:{$v_4$}] (4) at (0,2){};

\draw[edge]    (3)  edge  (2);
\draw[edge]    (1)  edge  (3);
\draw[edge]    (3)  edge  (4);
\end{scope}
\end{tikzpicture}

\caption{All circularly ordered graphs in $LF$. If we do not use the last graph
(the claw), the resulting family is $CF$.}
\label{fig:LF}
\end{figure}
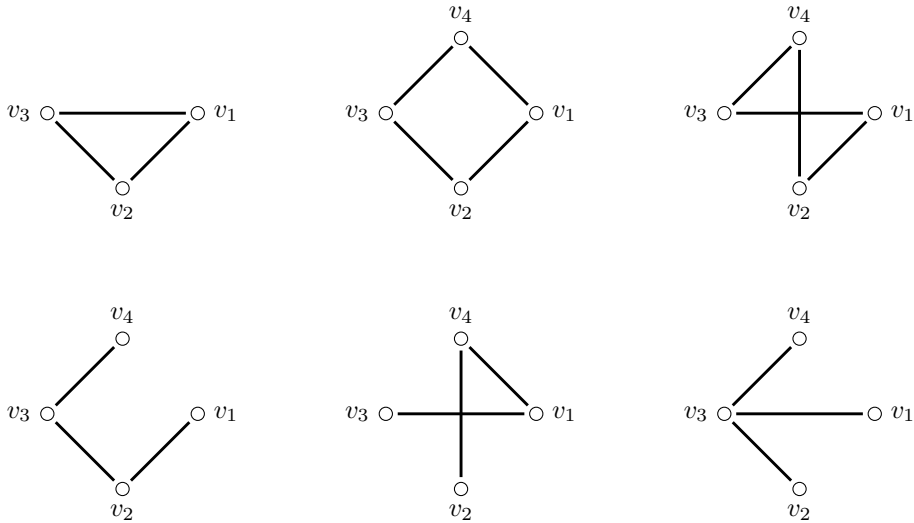
%%%%%%%%%%%%---------%%%%%%%%%------

It is convenient to define the following class of circularly ordered paths.
First note that there are four possible non-isomorphic circular orderings of
$P_4$, namely $SP_4$ (Figure \ref{fig:allcirc}.$m$), the crossed $P_4$ (Figure
\ref{fig:allcirc}.$n$) and two more which we will denote by $Z$ (Figure
\ref{fig:allcirc}.$o$) and $Z^\ast$ (Figure \ref{fig:allcirc}.$p$). Note that
the dual of $Z$ is $Z^\ast$ (which justifies our choice of notation). Given a
positive integer $k$ greater than $3$, a \textit{$k$-zigzag} is a circular
ordering of the $k$-path $P$ such that every induced copy of $P_4$ in $P$ is
ordered as $Z$ or $Z^\ast$. In particular, $Z$ and $Z^\ast$, are the unique
$4$-zigzags. Finally, we say that a circularly ordered graph $G' = (G,C)$ has a
pair of \textit{crossing edges} if there is a pair of edges $v_1v_3$ and
$v_2v_4$ of $G$, such that $(v_1,v_2,v_3) \in C$ and $(v_3,v_4,v_1) \in C$;
otherwise, we say that $G'$ has  no crossing edges. For instance, the crossed
$P_4$ is a circular ordering of $P_4$ with crossing edges, while the other three
circular orderings of $P_4$ have no crossing edges.

\begin{observation}
\label{obs:zigzag}
Let $k$ be a positive integer, $k\ge 4$. A circular arrangement $P_k'$ of $P_k$
is $LF$-free if and only if it is a $k$-zigzag.
\end{observation}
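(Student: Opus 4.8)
The plan is to exploit the fact that $P_k$ is an extremely restricted \emph{underlying} graph. First I would run through the six members of $LF$ and ask which of their underlying graphs can occur as an induced subgraph of $P_k$: the simple triangle has underlying graph $K_3$, both circular orderings of $C_4$ have underlying graph $C_4$, and the claw has underlying graph $K_{1,3}$. Since $P_k$ is a path, it is triangle-free, contains no cycle as a subgraph at all, and has maximum degree at most $2$, so none of $K_3$, $C_4$, $K_{1,3}$ is a subgraph of $P_k$. Hence no circular arrangement of $P_k$ can contain the simple triangle, either circular ordering of $C_4$, or the claw as an induced circularly ordered subgraph, and we conclude that a circular arrangement $P_k'$ of $P_k$ is $LF$-free if and only if it contains neither $SP_4$ nor the crossed $P_4$ as an induced circularly ordered subgraph.

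Second, I would pin down the induced copies of $P_4$ inside $P_k = u_1u_2\cdots u_k$. A short argument shows that a four-element subset of $V(P_k)$ induces a subgraph isomorphic to $P_4$ precisely when it is a window of four consecutive vertices $\{u_i,u_{i+1},u_{i+2},u_{i+3}\}$: the only edges of $P_k$ join consecutive vertices, so realising a $P_4$ forces the four chosen vertices to be consecutive, and four consecutive vertices always span exactly the three required path edges. Since there are exactly four non-isomorphic circular orderings of $P_4$ — namely $SP_4$, the crossed $P_4$, $Z$ and $Z^\ast$ (Figure~\ref{fig:allcirc}, graphs $(m),(n),(o),(p)$) — the restriction of the circular ordering of $P_k'$ to any such window is one of these four; and the restriction of a circular ordering to a subset is again a circular ordering, as recorded in Section~\ref{sec:Basic}.

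Combining the two steps: $P_k'$ is $LF$-free iff it contains no induced $SP_4$ and no induced crossed $P_4$, iff for every $i$ the induced $P_4$ on $\{u_i,u_{i+1},u_{i+2},u_{i+3}\}$ is circularly ordered as $Z$ or $Z^\ast$, iff every induced copy of $P_4$ in $P_k'$ is ordered as $Z$ or $Z^\ast$, which is exactly the definition of a $k$-zigzag. This yields both implications simultaneously.

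I do not anticipate a genuine obstacle here; the statement is essentially a matter of bookkeeping once one has the classification of the four circular orderings of $P_4$. The only two points that need to be stated with a little care are the elementary claim that the induced $P_4$'s of a path are exactly its windows of four consecutive vertices, and the (already established) fact that restricting a circular ordering produces a circular ordering, so that ``is ordered as $Z$ or $Z^\ast$'' is a well-posed condition on each window.
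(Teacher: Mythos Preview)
Your proposal is correct and follows essentially the same approach as the paper: reduce $LF$-freeness for a circular ordering of $P_k$ to avoiding just $SP_4$ and the crossed $P_4$, then use that the four circular orderings of $P_4$ are exactly $SP_4$, the crossed $P_4$, $Z$, and $Z^\ast$. The paper's proof is much terser (two sentences) and does not explicitly argue that the triangle, $C_4$, and claw members of $LF$ are irrelevant, nor does it identify the induced $P_4$'s as consecutive windows; your version simply spells out these implicit steps, and the window identification, while correct, is not strictly needed since the definition of $k$-zigzag already quantifies over ``every induced copy of $P_4$''.
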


\begin{proof}
Since there are exactly four circular orderings of $P_4$, and two of them,
namely the simple $P_4$ and the crossed $P_4$ are members of $LF$, then the
desired result follows directly from the definition of a $k$-zigzag.
\end{proof}

This simple observation yields the following statement.

\begin{proposition}
\label{prop:LF}
A graph $G$ is a linear forest if and only if it admits an $LF$-free circular
ordering.
\end{proposition}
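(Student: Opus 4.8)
The plan is to prove both directions using \Cref{obs:zigzag} together with a structural analysis of what the claw-freeness in $LF$ buys us.

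\medskip

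\noindent\textbf{Proof proposal.}
The plan is to prove the two implications separately.

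First, suppose $G$ admits an $LF$-free circular ordering $(G, C)$. Since $LF$ contains the simple triangle, both circular orderings of $C_4$, and the claw, the underlying graph $G$ can contain no triangle, no $C_4$, and no $K_{1,3}$ as an induced subgraph; but because the only obstructions to being a linear forest are a vertex of degree $3$ (a $K_{1,3}$, not necessarily induced) and a cycle, I must be slightly more careful. I would argue as follows: any connected graph that is not a path contains either an induced cycle (handled by the triangle and $C_4$ members of $LF$ for lengths $3$ and $4$, and by a longer-cycle argument below) or a vertex of degree at least $3$, which — in a $\{C_3, C_4\}$-free graph, or after passing to a suitable induced subgraph — gives an induced $K_{1,3}$. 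So from the triangle, both $C_4$'s, and the claw we already get that every component of $G$ is a path (once we also rule out long induced cycles). To rule out an induced cycle $C_\ell$ with $\ell \ge 5$: restrict $C$ to the vertices of such a cycle; this is a circular ordering of $C_\ell$, and I claim it must contain one of the forbidden patterns. Indeed, if the cyclic order of the vertices along the circle agrees with the cyclic order along the cycle, then two consecutive edges of $C_\ell$ already form an induced $P_3$ ordered as a simple $P_3$, extend it: any four consecutive cycle vertices in the natural order give $SP_4 \in LF$. If the orders disagree, there are two edges of $C_\ell$ that cross, and combined with the rest of the cycle one extracts either a crossed $P_4$ or (using that $\ell \ge 5$) one of the other members; this case analysis is the place to be careful. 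Thus $G$ is a disjoint union of paths, i.e.\ a linear forest.

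Conversely, suppose $G$ is a linear forest, say with components $P^{(1)}, \dots, P^{(m)}$ where $P^{(i)}$ has vertices $v^{(i)}_1, \dots, v^{(i)}_{n_i}$ in path order. I need to exhibit an $LF$-free circular ordering. The natural candidate is to concatenate zigzag orderings of the individual paths. Concretely, for a single path $P_k$ the ordering $Z$-type (a $k$-zigzag) is $LF$-free by \Cref{obs:zigzag}. For several components, arrange them so that each component occupies a contiguous arc of the circle, internally ordered as a $k$-zigzag, and the arcs are placed one after another around the circle. Since there are no edges between components, the only induced subgraphs on $\le 4$ vertices with any edges lie within a single component (or are edgeless, hence avoid every member of $LF$, all of which have at least one edge), so the only thing to check is the induced $P_4$'s, which all lie inside one component and are therefore ordered as $Z$ or $Z^\ast$ by construction. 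Hence the concatenated ordering is $LF$-free. The one subtlety is to make sure that placing several zigzags contiguously does not create a forbidden induced ordered subgraph straddling the boundary between two components — but any such subgraph has no edges crossing the boundary (no edges between components) and at most one endpoint-arc contributing an edge, so it reduces to the single-component case or is edgeless.

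\medskip

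\noindent\textbf{Main obstacle.}
The hard part will be the forward direction's treatment of long induced cycles $C_\ell$, $\ell \ge 5$: I must show that \emph{every} circular ordering of such a $C_\ell$ contains a member of $LF$ (a simple triangle or $C_4$ cannot appear as an induced subgraph of $C_\ell$ for $\ell \ge 5$, so the work must be done by $SP_4$, the crossed $P_4$, and the claw — and the claw cannot appear in $C_\ell$ either, so really only the two $P_4$-orderings in $LF$ are available). Equivalently, via \Cref{obs:zigzag}, I need: no circular ordering of $C_\ell$ ($\ell \ge 5$) is such that every induced $P_4$ sub-path is a zigzag; this is a clean combinatorial statement about whether a cycle's edge set can be realized by a ``zigzag'' chord pattern on $\ell$ points of a circle, and a short parity or counting argument (tracking how many times the zigzag pattern forces the orientation to ``reverse'' around the circle, which must return to its start) should close it. The $\ell = 5$ case can also be checked directly against the two circular orderings of $C_5$ described earlier in the paper ($SC_5$ and the complement-style ordering).
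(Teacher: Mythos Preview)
Your approach is essentially the same as the paper's: rule out degree $\ge 3$ via the claw/triangle, rule out $C_3$ and $C_4$ directly, reduce the long-cycle case to showing that no circular ordering of $C_\ell$ ($\ell \ge 5$) can have every induced $P_4$ ordered as a zigzag, and for the converse order each path as a zigzag. Your treatment of the converse (multiple components, boundary-straddling subgraphs) is in fact more careful than the paper's, which simply appeals to the single-path case.

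The one substantive difference is in how the long-cycle case is closed. You anticipate a parity/counting argument tracking ``reversals'' of the zigzag around the circle; the paper instead gives a shorter direct argument: in an $LF$-free ordering of $C_k$, the subpath $v_1\cdots v_{k-1}$ must be arranged as a $(k-1)$-zigzag (by \Cref{obs:zigzag}), and then one checks that \emph{any} position for $v_k$ creates an induced crossed $P_4$ (using the edges $v_{k-1}v_k$ and $v_kv_1$). This avoids the global parity bookkeeping you flagged as the main obstacle, so the step you were worried about is actually easier than you expected. Your parity idea would also work, but the paper's ``place the last vertex'' argument is cleaner and worth adopting.
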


\begin{proof}
One implication follows from the fact that for every positive integer $k$, a
$k$-zigzag is an $LF$-free circular ordering of $P_k$. To prove the converse
implication first note that if a graph $G$ has a vertex of degree at least $3$,
then $G$ contains either a claw or a triangle. Since the unique circular
ordering of both of these graphs belongs to $LF$, if a graph $G$ admits an
$LF$-free circular ordering then $G$ is a disjoint union of paths and cycles
with no triangles. Again, as both circular orderings of $C_4$ belong to $LF$,
any graph that admits an $LF$-free ordering is $C_4$-free. Now we show that for
every positive integer $k$, $k\ge 5$, the $k$-cycle, $C_k = v_1\cdots v_kv_1$,
does not admit an $LF$-free circular ordering. By Observation~\ref{obs:zigzag},
if $C_k$ admits an $LF$-free circular ordering, $C_k'$, then the induced path
$v_1 \cdots v_{k-1}$ must be arranged as a $k$-zigzag. But then, wherever $v_k$
is placed in the circular ordering it forces $C_k'$ to have an induced copy of
$P_4$ with crossing edges, contradicting the fact that $C_k'$ is an $LF$-free
ordering.
\end{proof}

Let $CF$ be the set obtained from $LF$ by removing the claw (see Figure
\ref{fig:LF}) and let $T_2$ be the graph obtained from the claw by subdividing
every edge. Recall that a graph $G$ is a caterpillar if and only if it is a
$T_2$-free tree.

\begin{proposition}
A graph $G$ is a caterpillar forest if and only if it admits a $CF$-free
circular ordering.
\end{proposition}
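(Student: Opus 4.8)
The plan is to prove the two implications separately; the substance lies in showing that the seven-vertex spider $T_2$ has no $CF$-free circular ordering, and in exhibiting an explicit $CF$-free arrangement of an arbitrary caterpillar.

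First I would record two preliminaries on $CF$. Since $CF$ differs from $LF$ only in that the claw has been removed, the simple triangle and both circular orderings of $C_4$ still belong to $CF$, and the only circular orderings of $P_4$ outside $CF$ are $Z$ and $Z^\ast$. From Figure~\ref{fig:allcirc}$(o)$--$(p)$ one reads off the criterion I will use repeatedly: an induced $P_4$, say $a-b-c-d$, is ordered as $Z$ or $Z^\ast$ precisely when its endpoints $a$ and $d$ lie antipodally in the four-point cyclic order, equivalently when the chord $bc$ separates $a$ from $d$. Also, the part of the proof of Proposition~\ref{prop:LF} showing that no cycle admits an $LF$-free ordering uses only the simple $P_4$, the crossed $P_4$, and the two orderings of $C_4$, all still in $CF$: a triangle and $C_4$ are forbidden outright, and for $k\ge 5$, once the induced $(k-1)$-path of $C_k$ is forced to be a zigzag, inserting the last vertex creates a crossed $P_4$. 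Hence no cycle admits a $CF$-free ordering, so any graph admitting one is a forest.

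For $(\Rightarrow)$, a forest is a caterpillar forest exactly when it has no induced $T_2$, so it suffices to prove that $T_2$ has no $CF$-free ordering. Label $T_2$ by its centre $c$, the neighbours $x_1,x_2,x_3$ of $c$, and the pendant $y_i$ at $x_i$. In a putative $CF$-free ordering, the restriction to $\{c,x_1,x_2,x_3\}$ is a circular ordering of $K_{1,3}$; all such are isomorphic, so after relabelling the legs the clockwise order of these four vertices is $c,x_1,x_2,x_3$, and in particular the chord $cx_2$ has $x_1$ on one side and $x_3$ on the other. Now $\{y_2,x_2,c,x_1\}$ and $\{y_2,x_2,c,x_3\}$ induce the paths $y_2-x_2-c-x_1$ and $y_2-x_2-c-x_3$, each of which must be a $Z$ or $Z^\ast$; by the criterion above the chord $cx_2$ would have to separate $y_2$ from $x_1$ and also from $x_3$, which is impossible. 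Restricting a hypothetical $CF$-free ordering of a forest containing an induced $T_2$ to that copy of $T_2$ would do exactly this, so a graph with a $CF$-free ordering has no induced $T_2$ and is a caterpillar forest.

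For $(\Leftarrow)$, disjoint unions reduce to a single caterpillar (concatenate $CF$-free orderings of the components along the circle; every member of $CF$ is connected, hence lies in one component, where the induced circular ordering is the chosen one). So let $T$ be a caterpillar; if $|V(T)|\le 2$ this is trivial, and otherwise I fix a longest path $p_1\cdots p_m$, so $p_1,p_m$ are leaves and every other vertex is a leaf attached to some internal $p_i$, giving leaf-sets $L_i$ ($2\le i\le m-1$, with $L_1=L_m=\varnothing$). I would then arrange $V(T)$ clockwise as
\[
 \underbrace{p_1,\,L_2,\,p_3,\,L_4,\,p_5,\dots}_{\text{odds, increasing}}\ ,\ \ \underbrace{\dots,p_6,\,L_5,\,p_4,\,L_3,\,p_2}_{\text{evens, decreasing}},
\]
the odd spine vertices increasing then the even ones decreasing, with each block $L_i$ inserted between $p_{i-1}$ and $p_{i+1}$ (the internal order of a block being immaterial). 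The hard part — and the reason this placement is the crux — is that putting a leaf next to its own spine vertex instantly creates a simple or crossed $P_4$; with the arrangement above, going clockwise from $p_i$ to $p_{i+1}$ one meets exactly $\{p_j:j\ge i+2\}\cup\bigcup_{j\ge i+1}L_j$, so the chord $p_ip_{i+1}$ separates $\{p_{i-1}\}\cup L_i$ from $\{p_{i+2}\}\cup L_{i+1}$. It then remains to observe that every induced $P_4$ of a caterpillar has the shape $x-p_i-p_{i+1}-y$ with $2\le i\le m-2$, $x\in\{p_{i-1}\}\cup L_i$ and $y\in\{p_{i+2}\}\cup L_{i+1}$ (a leaf can only be an endpoint of the $P_4$, which forces its neighbour $p_i$ into second place; the all-spine case is $x=p_{i-1}$, $y=p_{i+2}$). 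For such a $P_4$ the endpoints $x,y$ are separated by $p_ip_{i+1}$, so it is ordered as $Z$ or $Z^\ast$ and is not in $CF$; and $T$, being a tree, contains no triangle or $C_4$. Hence the arrangement is $CF$-free.
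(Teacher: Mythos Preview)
Your proof is correct and follows the same approach as the paper's: order the spine as a zigzag (your odds-increasing/evens-decreasing list is exactly a concrete $m$-zigzag), insert the leaves of $p_i$ in the arc between $p_{i-1}$ and $p_{i+1}$, and for the converse rule out cycles (via the argument of Proposition~\ref{prop:LF}) and $T_2$. Where the paper twice writes ``it is not hard to observe'' --- for the $CF$-freeness of the arrangement and for the impossibility for $T_2$ --- you supply clean details; one small slip is that your description of what lies clockwise from $p_i$ to $p_{i+1}$ is literally correct only for odd $i$ (for even $i$ one meets the complementary set), but the separation conclusion, and hence the proof, are unaffected.
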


\begin{proof}
First note that every caterpillar forest is an induced subgraph of a
caterpillar, thus it suffices to observe that every caterpillar admits a
$CF$-free circular ordering. We order the largest dominating path, $ P = v_1
\cdots v_k$, as a $k$-zigzag. Note that for every $j \in \{2,\cdots,k-1\}$ one
of the circular segments delimited by $v_{j-1}$ and $v_{j+1}$ contains no
vertices of $P$. We place the leaves adjacent to $v_j$ in this circular segment.
It is not hard to observe that this circular ordering of a caterpillar if
$CF$-free. On the contrary if $G$ is not a caterpillar forest then it must
contain a cycle or a $T_2$. With the same arguments as in the proof of
Proposition~\ref{prop:LF} one can notice that no cycle admits a $CF$-free
circular ordering. It is also not hard to observe that $T_2$ does not admit a
$CF$-free circular ordering, which concludes the proof.
\end{proof}

Now we show that forests can be characterized by a finite set of forbidden
circularly ordered graphs. Let $\mathcal{F}$ be the set of all circular patterns
depicted in Figure~\ref{fig:F}. In particular, every $\mathcal{F}$-free circular
ordered graph must avoid crossing edges. Thus, if we were to obtain an
$\mathcal{F}$-free circular ordering of a cycle, we should order its vertices
cyclicly, but then we would obtain either one of the cycles in $\mathcal{F}$ or
the simple $P_5$. Hence, no cycle admits an $\mathcal{F}$-free circular
ordering. We will show that every forest does admit an $\mathcal{F}$-free
circular ordering.

%%%%%%%%%%%%%%%%%%%%%
%%%%%%%%%%%%%%%%%%%%%

\begin{figure}[ht!]
\centering
\begin{tikzpicture}
\begin{scope}[scale=0.75]

%%%%%% RENGLON 1

\begin{scope}[xshift=-6cm, yshift=2cm, scale=0.5] %%%K_3
\node [vertex, label=right:{$v_1$}] (1) at (2,0){};
\node [vertex, label=below:{$v_2$}] (2) at (0,-2){};
\node [vertex, label=left:{$v_3$}] (3) at (-2,0){};

\draw[edge]    (1)  edge  (2);
\draw[edge]    (1)  edge  (3);
\draw[edge]    (2)  edge  (3);
\end{scope}

\begin{scope}[xshift=-2cm, yshift=2cm, scale=0.5] %%%C4 simple
\node [vertex, label=right:{$v_1$}] (1) at (2,0){};
\node [vertex, label=below:{$v_2$}] (2) at (0,-2){};
\node [vertex, label=left:{$v_3$}] (3) at (-2,0){};
\node [vertex, label=above:{$v_4$}] (4) at (0,2){};

\draw[edge]    (1)  edge  (2);
\draw[edge]    (2)  edge  (3);
\draw[edge]    (3)  edge  (4);
\draw[edge]    (1)  edge  (4);
\end{scope}

\begin{scope}[xshift=2cm, yshift=2cm, scale=0.5] %%%P4 Crossed
\node [vertex, label=right:{$v_1$}] (1) at (2,0){};
\node [vertex, label=below:{$v_2$}] (2) at (0,-2){};
\node [vertex, label=left:{$v_3$}] (3) at (-2,0){};
\node [vertex, label=above:{$v_4$}] (4) at (0,2){};

\draw[edge]    (4)  edge  (3);
\draw[edge]    (1)  edge  (3);
\draw[edge]    (2)  edge  (1);
\draw[edge]    (2)  edge  (4);
\end{scope}

\begin{scope}[xshift=6cm, yshift=2cm, scale=0.5]%Tache
\node [vertex, label=right:{$v_1$}] (1) at (2,0){};
\node [vertex, label=below:{$v_2$}] (2) at (0,-2){};
\node [vertex, label=left:{$v_3$}] (3) at (-2,0){};
\node [vertex, label=above:{$v_4$}] (4) at (0,2){};

\draw[edge]    (1)  edge  (3);
\draw[edge]    (4)  edge  (2);
\end{scope}

%%%%%%%%%%%% RENGLON 2

\begin{scope}[xshift=-4.5cm, yshift=-2cm, scale=0.5] %%%P4 crossed
\node [vertex, label=right:{$v_1$}] (1) at (2,0){};
\node [vertex, label=below:{$v_2$}] (2) at (0,-2){};
\node [vertex, label=left:{$v_3$}] (3) at (-2,0){};
\node [vertex, label=above:{$v_4$}] (4) at (0,2){};

\draw[edge]    (1)  edge  (3);
\draw[edge]    (4)  edge  (1);
\draw[edge]    (2)  edge  (4);
\end{scope}

\begin{scope}[xshift=0cm, yshift=-2cm, scale=0.5] %%%C5 simple
\node [vertex, label=right:{$v_1$}] (1) at (0:3){};
\node [vertex, label=above:{$v_2$}] (2) at (72:3){};
\node [vertex, label=left:{$v_3$}] (3) at (144:3){};
\node [vertex, label=left:{$v_4$}] (4) at (216:3){};
\node [vertex, label=below:{$v_5$}] (5) at (288:3){};

\draw[edge]    (1)  edge  (2);
\draw[edge]    (2)  edge  (3);
\draw[edge]    (3)  edge  (4);
\draw[edge]    (1)  edge  (5);
\draw[edge]    (4)  edge  (5);
\end{scope}

\begin{scope}[xshift=4.5cm, yshift=-2cm, scale=0.5] %%%P5 simple
\node [vertex, label=right:{$v_1$}] (1) at (0:3){};
\node [vertex, label=above:{$v_2$}] (2) at (72:3){};
\node [vertex, label=left:{$v_3$}] (3) at (144:3){};
\node [vertex, label=left:{$v_4$}] (4) at (216:3){};
\node [vertex, label=below:{$v_5$}] (5) at (288:3){};

\draw[edge]    (1)  edge  (2);
\draw[edge]    (2)  edge  (3);
\draw[edge]    (3)  edge  (4);
\draw[edge]    (4)  edge  (5);
\end{scope}

\end{scope}
\end{tikzpicture}

\caption{All circularly ordered graphs in $F$.}
\label{fig:F}
\end{figure}
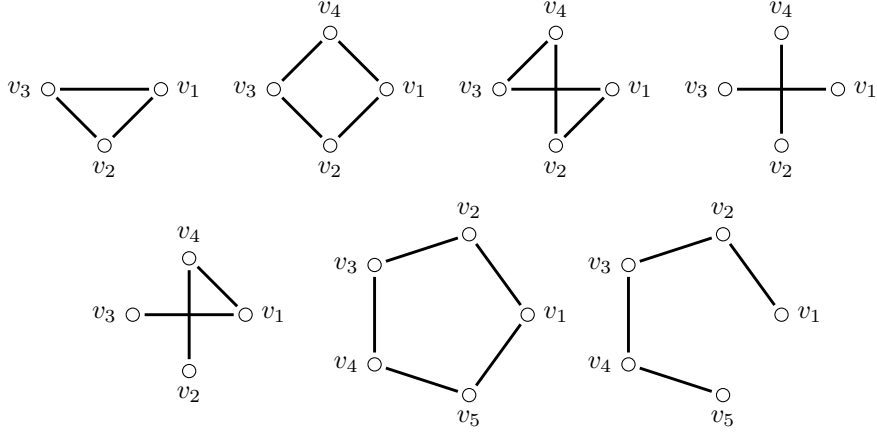
%%%%%%%%%%%%---------%%%%%%%%%------

\begin{theorem}
\label{thm:trees}
A graph $G$ admits an $\mathcal{F}$-free circular ordering if and only if $G$ is
a forest.
\end{theorem}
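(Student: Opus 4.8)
The forward implication is essentially already in hand: the discussion preceding the theorem shows that no cycle admits an $\mathcal{F}$-free circular ordering, and since avoiding a circularly ordered graph passes to induced circularly ordered subgraphs while every graph containing a cycle contains an induced (chordless) one, no graph with a cycle can admit an $\mathcal{F}$-free circular ordering. Hence only forests can, and the remaining task is the converse.

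For the converse I would first reduce to trees: if $G$ is a disjoint union of trees, each carrying an $\mathcal{F}$-free circular ordering, place these orderings into pairwise disjoint arcs of a circle. An induced copy of any \emph{connected} $F\in\mathcal{F}$ would fall inside a single component, contradicting that component's $\mathcal{F}$-freeness; and the only disconnected member of $\mathcal{F}$ is $cr$, two disjoint crossing edges, which cannot straddle two components, since then each of its edges lies in a different component and so the two endpoints of (either) edge are cyclically consecutive among the four vertices, whereas in $cr$ they are separated. So it suffices to build an $\mathcal{F}$-free circular ordering of an arbitrary tree $T$.

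For this I would use a parity-reversing depth-first layout. Root $T$ at an arbitrary vertex and fix an arbitrary order of the children at each vertex; recursively let the block of positions occupied by the subtree $T_v$ be $[v][T_{c_1}]\cdots[T_{c_m}]$ when $\operatorname{depth}(v)$ is even and $[T_{c_1}]\cdots[T_{c_m}][v]$ when $\operatorname{depth}(v)$ is odd, where $c_1,\dots,c_m$ are the children of $v$, and let $C$ be the circular closure of the resulting linear order. (When $T$ is a path rooted at an end this reproduces exactly the zigzag ordering used for linear forests, so this is the natural generalization of that construction.) Since $T$ is a forest it contains no triangle, $C_4$, or $C_5$, so the only members of $\mathcal{F}$ that $(T,C)$ could contain are $cr$, the crossed $P_4$, and $SP_5$; and, $T$ being triangle- and $C_4$-free, the first two occur precisely when $C$ has a pair of crossing edges. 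Thus I need only show that $C$ has no crossing edges and no induced $SP_5$. The no-crossings claim holds for any depth-first layout in which each vertex sits at one end of its subtree's block: each subtree occupies a contiguous arc, an edge from a vertex to a child joins a boundary of a block to a boundary of a sub-block contained in it, and one checks routinely that no two such chords interleave. For $SP_5$ I would first record the elementary observation that, in a crossing-free circular ordering of a tree, an induced $P_5$ is ordered as $SP_5$ if and only if its two end-vertices are cyclically consecutive (every other circular ordering of $P_5$ whose endpoints are consecutive has a pair of crossing edges). It then remains to show that in the parity-reversing layout the two ends $q_0,q_4$ of an induced path $q_0q_1q_2q_3q_4$ are never cyclically consecutive. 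This is exactly where the parity reversal is used: along any root path consecutive vertices sit at opposite ends of their blocks, so that, writing $q_t$ for the vertex of the path of least depth, both $q_0$ and $q_4$ end up strictly interior to a nest of blocks, with a vertex of $T$ between them on each side of the circle.

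The one real obstacle is this last verification: a case analysis on $t\in\{0,1,2,3,4\}$, on the parity of $\operatorname{depth}(q_t)$, and on the order of the two child-subtrees of $q_t$ meeting the path, tracking inside the nested block structure the cyclic order of the five path-vertices and confirming that $q_0$ and $q_4$ are non-consecutive; in each case the cyclic order turns out to be one of a short list of patterns, none of them $SP_5$. The other ingredients — the reduction to trees, the absence of crossings, and the ``consecutive endpoints'' criterion for $SP_5$ — are straightforward.
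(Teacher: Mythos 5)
Your argument is correct, and it takes a genuinely different route from the paper's. The paper reduces forests to trees by embedding a forest in a tree, and then builds the ordering incrementally in BFS order, inserting each vertex immediately ahead of or behind its parent according to the cyclic position of parent, grandparent and great-grandparent; it excludes the simple $P_5$ by analysing which vertex of an induced simple $P_4$/$P_5$ is closest to the root and comparing with the insertion order. You instead use a DFS layout by nested subtree blocks with the block's root placed at its left or right end according to depth parity, reduce forests to trees via disjoint arcs (correctly noting that $cr$ is the only disconnected member of $\mathcal{F}$ and cannot straddle components), and replace the paper's ad hoc $SP_5$ argument by a reusable criterion: in a crossing-free ordering an induced $P_5$ is an $SP_5$ exactly when its endpoints are consecutive among its five vertices (the non-crossing Hamiltonian-cycle fact, after adding the virtual closing edge). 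The one step you defer does go through: with $q_t$ the shallowest path vertex, every case ($t\in\{0,\dots,4\}$, either parity, either order of the relevant child blocks) yields, up to reversing the path labels, one of the cyclic patterns $(q_2,q_0,q_1,q_4,q_3)$, $(q_1,q_0,q_3,q_4,q_2)$, $(q_1,q_3,q_4,q_2,q_0)$, $(q_0,q_2,q_4,q_3,q_1)$, in none of which $q_0$ and $q_4$ are consecutive. Your route buys a clean lemma and a finite mechanical check; the paper's incremental construction more directly supports the algorithmic remark it makes after the theorem.
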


\begin{proof}
We have already shown that if $G$ admits an $\mathcal{F}$-free circular
ordering, then $G$ is an acyclic graph. To observe that every forest admits such
a circular ordering it suffices to note that every tree does. Indeed, every
forest is an induced subgraph of some tree and the class of graphs that admit an
$\mathcal{F}$-free circular ordering is a hereditary class of graphs.

Given a pair of vertices, $x$ and $y$, whenever we say we place $y$ ``ahead of''
(``behind'') $x$, we think we are traversing the circle in a clockwise motion
starting from $x$ and we place $y$ before seeing any other vertex (after seeing
all other vertices).

Let $T$ be a tree. We will describe the circular ordering of $V(T)$ by arranging
the vertices of $T$ around the circle and we will construct this arrangement
recursively. Let $\{v_0,\dots, v_{n-1}\}$ be an enumeration of the vertices of
$T$ in such a way that if $i \le j$ then $d(v_i,v_0) \le d(v_j,v_0)$. In
particular, the graph $T_k$ induced by $\{v_0, \dots, v_k\}$ is a tree for every
$k \in \{0,\dots, n-1\}$ where $v_k$ is a leaf of $T_k$. We first place the root
$v_0$ anywhere in the circle. Suppose we have arranged $V(T_{k-1})$, now we
arrange $V(T_k)$ by respecting the ordering of $V(T_{k-1})$ and simply including
$v_k$ as follows. Let $a$ be the ancestor of $v_k$. If $a = v_0$ then
incorporate $v_k$ behind $v_0$. On the other hand, let $b$ be the ancestor of
$a$. If $b = v_0$ then include $v_k$ behind $a$. Finally, if $b \ne v_0$ let $c$
be the ancestor of $b$. There are two cases:

\begin{itemize}
	\item when traversing the circle in a clockwise motion we see $(a,b,c)$, in
    this case we include $v_k$ ahead of $a$, or

	\item when traversing the circle in a clockwise motion we see $(a,c,b)$, in
    this case we include $v_k$ behind $a$.
\end{itemize}

We illustrate this construction in Figure~\ref{fig:recConst}.

%%%%----------- Figura construccion recursiva

\begin{figure}[ht!]
\centering

\begin{tikzpicture}

\begin{scope}[xshift=-4.5cm, yshift=0cm, scale=0.5]
\node [vertex, fill, label=90:{$v_0$}] (0) at (90:3){};
\end{scope}

\begin{scope}[xshift=0cm, yshift=0cm, scale=0.5]
\node [vertex, label=90:{$a = v_0$}] (0) at (90:3){};
\node [vertex] (1) at (306:3){};
\node [vertex] (2) at (198:3){};
\node [vertex] (3) at (54:3){};
\node [vertex] (4) at (270:3){};
\node [vertex, fill, label=126:{$v_k$}] (5) at (126:3){};
\node [vertex] (6) at (346:3){};
\node [vertex] (7) at (234:3){};
\node [vertex] (8) at (18:3){};
\node [vertex] (9) at (162:3){};

\draw[edge]    (0)  edge  (5);
\end{scope}

\begin{scope}[xshift=4.5cm, yshift = 0cm, scale=0.5]
\node [vertex, label=90:{$b = v_0$}] (0) at (90:3){};
\node [vertex] (1) at (306:3){};
\node [vertex] (2) at (198:3){};
\node [vertex] (3) at (54:3){};
\node [vertex, fill, label=270:{$v_k$}] (4) at (270:3){};
\node [vertex] (5) at (126:3){};
\node [vertex] (6) at (346:3){};
\node [vertex, label=234:{$a$}] (7) at (234:3){};
\node [vertex] (8) at (18:3){};
\node [vertex] (9) at (162:3){};

\draw[edge]    (0)  edge  (7);
\draw[edge]    (4)  edge  (7);
\end{scope}

%%%%%Renglon 2

\begin{scope}[xshift=-3cm, yshift=-5cm, scale=0.5]
\node [vertex] (0) at (90:3){};
\node [vertex, label=306:{$c$}] (1) at (306:3){};
\node [vertex, label=198:{$a$}] (2) at (198:3){};
\node [vertex, label=54:{$b$}] (3) at (54:3){};
\node [vertex] (4) at (270:3){};
\node [vertex] (5) at (126:3){};
\node [vertex] (6) at (346:3){};
\node [vertex] (7) at (234:3){};
\node [vertex] (8) at (18:3){};
\node [vertex, fill, label=162:{$v_k$}] (9) at (162:3){};

\draw[edge]    (2)  edge  (3);
\draw[edge]    (9)  edge  (2);
\draw[edge]    (3)  edge  (1);
\end{scope}

\begin{scope}[xshift=3cm, yshift=-5cm, scale=0.5]
\node [vertex] (0) at (90:3){};
\node [vertex] (1) at (306:3){};
\node [vertex, label=198:{$a$}] (2) at (198:3){};
\node [vertex, label=54:{$b$}] (3) at (54:3){};
\node [vertex] (4) at (270:3){};
\node [vertex, label=126:{$c$}] (5) at (126:3){};
\node [vertex] (6) at (346:3){};
\node [vertex, fill, label=234:{$v_k$}] (7) at (234:3){};
\node [vertex] (8) at (18:3){};
\node [vertex] (9) at (162:3){};

\draw[edge]    (2)  edge  (7);
\draw[edge]    (3)  edge  (2);
\draw[edge]    (3)  edge  (5);
\end{scope}

\end{tikzpicture}

\caption{Five possible steps in the proposed recursive circular arrangement of
a tree.}
\label{fig:recConst}
\end{figure}
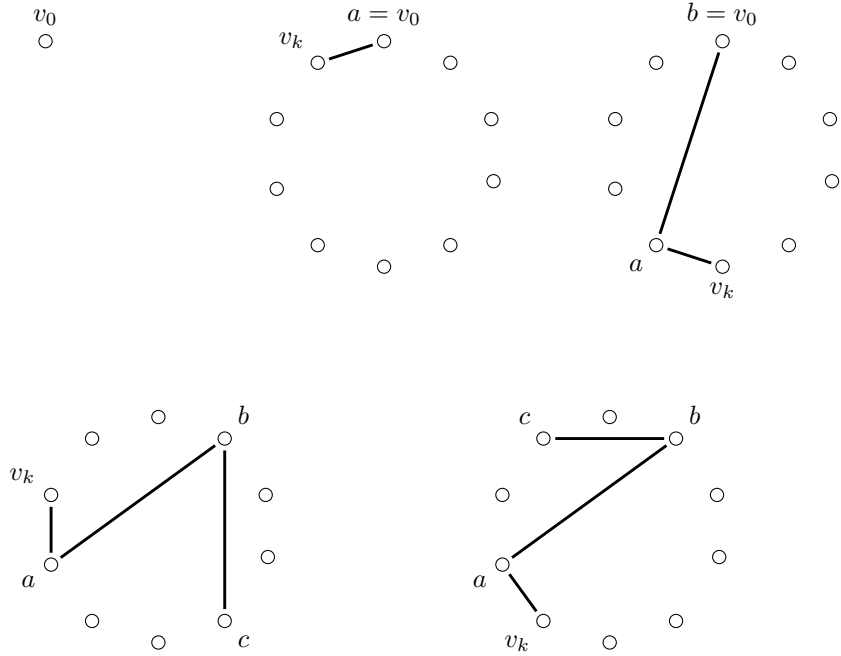
%%%%%%%%%%%%---------%%%%%%%%%------

Let $T_c$ be the tree $T$ together with the previously constructed circular
ordering of $V(T)$. Now we prove that $T_c$ is $\mathcal{F}$-free. Since $T$ is
a tree, $T_c$ avoids every cycle in $\mathcal{F}$. Also, at every step of the
recursive construction, we place the new vertex either ahead or behind its
parent, so there are no crossing edges in $T_c$. Thus, it only remains to verify
that $T_c$ contains no simple $P_5$.

First assume that $T_c$ contains a simple $P_4$, $P_4 = u_1u_2u_3u_4$. Let $i
\in \{1,2,3,4\}$ be the vertex of $P_4$ closest to $v_0$ in $T$ (this index is
unique because otherwise there would be a cycle in $T$). Due to the recursive
rule, it is not hard to notice that $i\in\{2,3\}$. Thus, if $T_c$ contains a
simple $P_5$, say $P_5 = w_1w_2w_3w_4w_5$, then $w_3$ must be the closest vertex
in $P_5$ to $v_0$. So when we added $w_1$ to $T_c$, we included it behind its
parent, $w_2$. Which means that $w_3 = v_0$ or when traversing the circle in a
clockwise motion we see $(w_2,c,w_3)$ where $c$ is the parent of $w_3$. In both
cases, when $w_5$ was included in the arrangement, it was added behind $w_4$.
Then, it means that when we included $w_5$ in the recursion process $w_3$ was
not included yet, but this contradicts the fact the $w_3$ is closer to $v_0$ and
the choice of the order in which we process the vertices of $T$. Therefore,
$T_c$ is $\mathcal{F}$-free.
\end{proof}

Note that the recursive construction of the circular ordering exposed in the
proof of Theorem~\ref{thm:trees}, yields an algorithm to construct an
$\mathcal{F}$-free circular ordering of a tree. This algorithm runs in
polynomial time as we process every vertex only once, and every time we process
a vertex we make a constant amount of operations.

The descriptions by forbidden circular arrangements of outerplanar graphs and
circular-arc graphs proposed in this section are simpler (and somewhat more
intuitive) than their descriptions by forbidden linearly ordered graphs. On the
contrary, describing forests, linear forests and caterpillar forest by linearly
ordered graphs yield simpler expressions (and proofs) than describing these
classes by forbidden circularly ordered graphs. But this should be expected
since these classes are characterized by forbidden linear patterns on three
vertices.  Every graph on three vertices has a unique circular ordering, thus,
forbidding induced circularly ordered graphs on three vertices is equivalent to
forbidding induced graphs on three vertices (without orderings), but none of
these families can be characterized by forbidding induced subgraphs on three
vertices. Nonetheless the statements of this section show that circularly
ordered graphs can describe several natural graph classes. Moreover, these
observations raise the question of whether for any finite set of linearly
ordered patterns $F$ there is a finite set $F'$ of (possible larger) circular
arrangements such that $F$ and $F'$ describe the same classes by forbidden
linearly ordered patterns and forbidden circular arrangements, respectively.

%%%%%%%%%%%%%%%%%%%%%%%%%%%%%
%%%%%%%%%%%%%%%%%%%%%%%%%%%%%
%%%%%%%%%%%%%%%%%%%%%%%%%%%%%

\section{Circular chromatic number and circular orderings}\label{sec:circhrom}

In this section we study how certain forbidden circular orderings relate to the
circular chromatic number of graphs. These forbidden orderings stem from the
following characterization of $k$-colourable graphs in terms of forbidden linear
orderings.

\begin{proposition}\cite{feuilloleyJDM, hellESA2014}
\label{prop:kcollinear}
Let $k$ be a positive integer. A graph $G$ is $k$-colourable if and only if
there is a linear ordering $\le$ of $V(G)$ such that there are no $k+1$ vertices
$v_1 \le \cdots \le v_{k+1}$ such that $v_iv_{i+1}\in E(G)$ for every
$i\in\{1,\dots, k\}$.
\end{proposition}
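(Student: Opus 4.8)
The plan is to prove both directions of Proposition~\ref{prop:kcollinear} by translating between proper $k$-colourings and linear orderings with no monochromatic-free-looking path of $k+1$ vertices whose consecutive pairs are edges. Call a linear ordering \emph{good} if it has no $v_1 \le \cdots \le v_{k+1}$ with $v_i v_{i+1} \in E(G)$ for all $i \in \{1,\dots,k\}$.

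\emph{From colouring to ordering.} Suppose $c\colon V(G) \to \{1,\dots,k\}$ is a proper $k$-colouring. Order $V(G)$ so that all vertices of colour class $1$ come first (in any internal order), then all of colour class $2$, and so on. I claim this ordering is good. Indeed, if $v_1 \le \cdots \le v_{k+1}$ with every consecutive pair an edge, then since $c$ is proper, $c(v_i) \neq c(v_{i+1})$ for each $i$; and since the ordering groups colour classes in increasing order, $v \le v'$ implies $c(v) \le c(v')$, hence $c(v_1) \le c(v_2) \le \cdots \le c(v_{k+1})$. Combined with the strict inequalities at each step, this forces $c(v_1) < c(v_2) < \cdots < c(v_{k+1})$, giving $k+1$ distinct values in $\{1,\dots,k\}$, a contradiction.

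\emph{From ordering to colouring.} Conversely, suppose $\le$ is a good linear ordering of $V(G)$. For each vertex $v$, let $\ell(v)$ be the maximum number of vertices in a path $u_1 \le u_2 \le \cdots \le u_m = v$ (increasing in $\le$, consecutive pairs edges, ending at $v$); equivalently, $\ell(v) = 1 + \max\{\ell(u) : u < v,\ uv \in E(G)\}$, with the max over the empty set taken to be $0$. Since $\le$ is good, no such path has more than $k$ vertices, so $\ell(v) \in \{1,\dots,k\}$ for every $v$. Set $c(v) = \ell(v)$. If $uv \in E(G)$ with $u < v$, then the definition of $\ell(v)$ as a max over in-neighbours below $v$ gives $\ell(v) \ge \ell(u)+1 > \ell(u)$, so $c(u) \ne c(v)$; hence $c$ is a proper $k$-colouring and $G$ is $k$-colourable.

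\emph{Main obstacle.} There is essentially no hard step here — the proposition is a folklore fact cited from \cite{feuilloleyJDM, hellESA2014}, and a clean self-contained argument is exactly the two short constructions above. The only point requiring a moment's care is verifying that $\ell$ is well-defined and bounded by $k$: this uses that $\le$ is a linear order (so ``longest increasing edge-path ending at $v$'' is over a finite, well-founded set) together with the goodness hypothesis, and one should note the base case where $v$ has no smaller in-neighbour gives $\ell(v)=1$. If the paper prefers, one can also phrase the forward direction via Theorem~\ref{thm:pqhoms}-style homomorphism language ($G \to K_k$ iff such an ordering exists), but the direct colour-class argument is the most transparent.
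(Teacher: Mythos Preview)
Your proof is correct and is the standard argument for this folklore result. Note that the paper does not actually prove Proposition~\ref{prop:kcollinear}: it is stated with citations to \cite{feuilloleyJDM, hellESA2014} and used as a known fact, so there is no in-paper proof to compare against. Your two constructions (ordering by colour class for the forward direction, and the longest-increasing-path labelling---essentially the Gallai--Hasse--Roy--Vitaver argument---for the converse) are exactly the expected ones.
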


This result can be restated in terms of homomorphisms. Recall that for graphs
$G$ and $H$, we denote the existence of a homomorphism from $G$ to $H$ by $G \to
H$; we also denote by $G \not \to H$ the fact that there is no homomorphism from
$G$ to $H$.

For a positive integer $k$ denote by $St_k$ the straight path on $k$ vertices,
i.e., $St_k$ has vertex set $\{v_1, \dots, v_k\}$ with the natural ordering of
their indices and with edge set $\{v_1v_2, v_2v_3, \dots, v_{k-1}v_k\}$. Now
Proposition~\ref{prop:kcollinear} can be restated as follows: \textit{a graph
$G$ is $k$-colourable if and only if there is a linear ordering $\le$ of $V(G)$
such that there is no homomorphism (of linearly order graphs) from $St_{k+1}$ to
$(G,\le)$.}

We are interested in proving an analogous version of this result for circular
orderings. Instead of the straight path $St_k$ we consider the simple path
$SP_k$.   (The definition of $SP_k$ is given in Section \ref{sec:Basic},
and the simple path $SP_4$ is depicted in Figure \ref{fig:allcirc} ($m$).) For a
positive integer $k$, $k\ge 2$, we denote by $\mathcal{C}_k$ the class of graphs
$G$ that admit a circular ordering $c(\le)$ such that $SP_k\not\to (G,c(\le))$
(as circularly ordered graphs). We proceed to characterize these classes in
terms of the circular chromatic number, and we begin with the following
observation.

\begin{observation}
\label{obs:hompreimage}
For any positive integer $k$, $k\ge 2$, the class $\mathcal{C}_k$ is closed
under homomorphic pre-images. That is, if a graph $G$ belongs to
$\mathcal{C}_k$, then for any graph $H$ such that $H\to G$ we have that $H\in
\mathcal{C}_k$.
\end{observation}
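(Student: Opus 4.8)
The natural idea is to transport a good circular ordering of $G$ back to $H$ along the homomorphism, so that a forbidden monotone copy of $SP_k$ inside the new ordering of $H$ projects to one inside the given ordering of $G$. Fix a homomorphism $\varphi\colon H\to G$. Since restricting a circular ordering to a subgraph preserves $SP_k$‑freeness, $\mathcal C_k$ is closed under taking (not necessarily induced) subgraphs, so we may replace $G$ by $\varphi(H)$ and assume $\varphi$ is surjective. Choose a circular ordering $C_G=c(\le_G)$ of $V(G)$ with $SP_k\not\to(G,C_G)$, and build a linear order $\le_H$ of $V(H)$ refining the pullback of $\le_G$: declare $h\le_H h'$ whenever $\varphi(h)<_G\varphi(h')$, and break ties inside each fibre $\varphi^{-1}(g)$ by some fixed ordering. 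Put $C_H=c(\le_H)$; the goal is to show $SP_k\not\to(H,C_H)$, which gives $H\in\mathcal C_k$.

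\textbf{The main case.} Suppose, for contradiction, that $\psi\colon SP_k\to(H,C_H)$ is a homomorphism of circularly ordered graphs. The case $k=2$ is trivial, since $\mathcal C_2$ is just the class of edgeless graphs; so assume $k\ge 3$. Then $\psi$ is injective, and $\psi(v_1),\dots,\psi(v_k)$ are distinct vertices that appear around the circle of $C_H$ in the cyclic order $v_1,\dots,v_k$, with consecutive ones adjacent in $H$. Because $G$ is loopless, no edge of $H$ stays within one fibre; and because each fibre is a block of consecutive elements of $\le_H$, hence a circular interval of $C_H$, one checks that each fibre contains at most one of the $\psi(v_i)$, with the single possible exception that the fibre of $\psi(v_1)$ also contains $\psi(v_k)$. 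If $\varphi\circ\psi$ is injective, then it is again a homomorphism $SP_k\to(G,C_G)$: edges are clearly preserved, and distinct images are $\le_G$‑comparable exactly as their preimages are $\le_H$‑comparable, so the cyclic order is preserved as well — contradicting $SP_k\not\to(G,C_G)$.

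\textbf{The obstacle.} The remaining possibility, $\varphi\psi(v_1)=\varphi\psi(v_k)$, is the delicate step and the one I expect to be the main obstacle. Here the cyclic rotation implicitly chosen when we pass from $\le_H$ to the circular closure $C_H$ has split one fibre of $\varphi$, so the monotone path $\psi(v_1),\dots,\psi(v_k)$ of $(H,C_H)$ projects not to a monotone copy of $SP_k$ in $(G,C_G)$ but to a closed walk whose vertex set is an induced copy of $SC_{k-1}$ inside $(G,C_G)$; so the naive composition argument breaks, and indeed a careless choice of the internal ordering of a fibre really can fail. To handle this I would choose the internal order of each fibre with foresight — for instance listing first the vertices having a neighbour in a $\le_G$‑larger fibre — and then verify that no rotation of $\le_H$ can simultaneously split a fibre and carry a monotone $P_k$; an alternative is to induct on $k$, using that in this situation $G$ must already contain $SC_{k-1}$, to derive a contradiction from $SP_k\not\to(G,C_G)$. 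Either route closes the case, yielding $SP_k\not\to(H,C_H)$ and hence $H\in\mathcal C_k$.
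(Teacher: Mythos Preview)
Your construction — making each fibre $\varphi^{-1}(g)$ a contiguous arc, in the cyclic order inherited from $C_G$ — is exactly the paper's. Where you diverge is in the verification, and the divergence stems from a misreading of what ``$SP_k\to(G,C)$'' means here. You assert that any homomorphism $\psi\colon SP_k\to(H,C_H)$ of circularly ordered graphs is injective, and then agonise over whether $\varphi\circ\psi$ is injective. But the paper does \emph{not} require homomorphisms of circularly ordered graphs to be injective: Observation~\ref{obs:cric2} explicitly uses $SP_3\to K_2$, the description of $\mathcal H_4$ lists the triangle (a non-injective image of $SP_4$), and the proof of Proposition~\ref{prop:circhom} allows $u_1=u_{k+1}$. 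The operative notion is that a map preserves edges and preserves the ternary relation on triples whose images are distinct.

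Once this is understood, your ``obstacle'' evaporates. With fibres as contiguous arcs, the map $\varphi\colon(H,C_H)\to(G,C_G)$ is itself a homomorphism of circularly ordered graphs: edges are preserved by hypothesis, and for any three vertices of $H$ lying in distinct fibres the cyclic order is by construction that of their images in $C_G$. Hence any $\psi\colon SP_k\to(H,C_H)$ composes to give $\varphi\circ\psi\colon SP_k\to(G,C_G)$, contradicting the choice of $C_G$. No case analysis on injectivity, no special internal ordering of fibres, no induction is needed — which is why the paper's proof is one sentence and why it says \emph{any} ordering making the fibres contiguous will do.

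Your proposed fixes cannot work as stated, and it is worth seeing why. Under the strict (injective) reading of homomorphism that you implicitly adopt, the observation is simply \emph{false}: $K_2$ would lie in $\mathcal C_3$ (there is no injective map $SP_3\to K_2$), yet $P_3\to K_2$ and $P_3\notin\mathcal C_3$ (the identity is an injective $SP_3\to P_3$). So neither a clever internal ordering of the fibres nor an induction on $k$ can rescue the argument under that reading; the ``obstacle'' you isolate is precisely the counterexample. The remedy is not more work in the split-fibre case but the correct (non-injective) notion of homomorphism, under which composition finishes the proof immediately.
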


\begin{proof}
Let $\varphi \colon H \to G$ be a homomorphism. It suffices to order vertices if
$H$ in any way such that for every $x\in V(G)$ the vertices of $H$ in
$\varphi^{-1}(x)$ are contiguous in the circular ordering.
\end{proof}

It is not hard to observe that for every positive integer $k$, $k\ge 2$, there
is a finite set $\mathcal{H}_k$ such that a graph belongs to $\mathcal{C}_k$ if
and only if it admits an $\mathcal{H}_k$-free circular ordering. Indeed,
$\mathcal{H}_k$ can be constructed by first considering the family of all
circularly ordered graphs that are homomorphic images of $SP_k$, then obtaining
$\mathcal{H}_k$ as the antichain of minimal circularly ordered graphs (with
respect to the order of induced circularly ordered graphs) in this family. For
instance, $\mathcal{H}_4$ consists of the triangle, the simple $C_4$ and
$SP_4$. These circularly ordered graphs are depicted in Figure~\ref{fig:H4}.

%%%%----------- Figura con H4

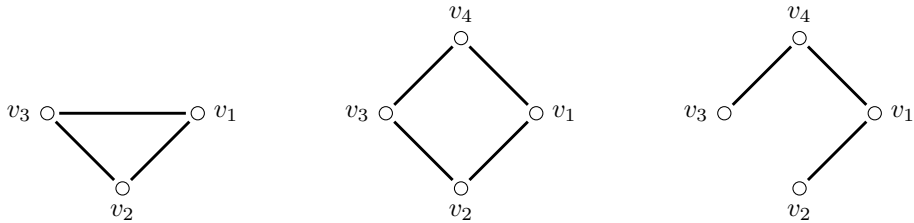
\begin{figure}[ht!]
\centering

\begin{tikzpicture}

\begin{scope}[xshift=-4.5cm, yshift=2cm, scale=0.5] %%%K_3
\node [vertex, label=right:{$v_1$}] (1) at (2,0){};
\node [vertex, label=below:{$v_2$}] (2) at (0,-2){};
\node [vertex, label=left:{$v_3$}] (3) at (-2,0){};

\draw[edge]    (1)  edge  (2);
\draw[edge]    (1)  edge  (3);
\draw[edge]    (2)  edge  (3);
\end{scope}

\begin{scope}[xshift=0cm, yshift=2cm, scale=0.5] %%%C4 simple
\node [vertex, label=right:{$v_1$}] (1) at (2,0){};
\node [vertex, label=below:{$v_2$}] (2) at (0,-2){};
\node [vertex, label=left:{$v_3$}] (3) at (-2,0){};
\node [vertex, label=above:{$v_4$}] (4) at (0,2){};

\draw[edge]    (1)  edge  (2);
\draw[edge]    (2)  edge  (3);
\draw[edge]    (3)  edge  (4);
\draw[edge]    (1)  edge  (4);
\end{scope}

\begin{scope}[xshift=4.5cm, yshift=2cm, scale=0.5] %%%P4 Simple
\node [vertex, label=right:{$v_1$}] (1) at (2,0){};
\node [vertex, label=below:{$v_2$}] (2) at (0,-2){};
\node [vertex, label=left:{$v_3$}] (3) at (-2,0){};
\node [vertex, label=above:{$v_4$}] (4) at (0,2){};

\draw[edge]    (4)  edge  (3);
\draw[edge]    (4)  edge  (1);
\draw[edge]    (2)  edge  (1);
\end{scope}
\end{tikzpicture}

\caption{An illustration of the circularly ordered graphs in $\mathcal{H}_4$.}
\label{fig:H4}
\end{figure}
%%%%%%%%%%%%---------%%%%%%%%%------

We proceed to show that for any positive integer $k$, $k\ge 2$, a graph $G$ with
$\chi_c(G) < k$ must satisfy $G \in \mathcal{C}_{k+1}$. In fact, we can
immediately show that when $k=2$, this condition is not only sufficient, but
also necessary.

\begin{observation}
\label{obs:cric2}
A graph $G$ belongs to $\mathcal{C}_3$ if and only if $\chi_c(G) < 2$.
\end{observation}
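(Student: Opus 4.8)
The plan is to identify both $\mathcal{C}_3$ and the class $\{G : \chi_c(G) < 2\}$ explicitly: each of them turns out to be exactly the class of edgeless graphs, and once this is established the equivalence is immediate. So the proof splits into two independent computations, neither of which is hard.

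First I would pin down $\{G : \chi_c(G) < 2\}$. If $\chi_c(G) < 2$, then by Proposition~\ref{prop:min} there are positive integers $p,q$ with $\sfrac{p}{q} < 2$ and $G \to K_{\sfrac{p}{q}}$; since $p < 2q$, the graph $K_{\sfrac{p}{q}}$ is edgeless, and a homomorphism into an edgeless graph forces $G$ itself to be edgeless. Conversely, an edgeless graph $G$ on $n \ge 1$ vertices satisfies $G \to K_1 = K_{\sfrac{1}{1}}$, so $\chi_c(G) \le 1 < 2$ (and $\chi_c(G)\ge 1$ because always $q\le p$). Hence $\{G : \chi_c(G) < 2\}$ is precisely the class of edgeless graphs.

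Next I would show that $\mathcal{C}_3$ equals the same class. If $G$ is edgeless, then since the underlying graph of $SP_3$ has an edge there is no graph homomorphism from it into $G$, and hence also no homomorphism of circularly ordered graphs $SP_3 \to (G,C)$ for any circular ordering $C$; so $G \in \mathcal{C}_3$. For the converse, suppose $G$ has an edge $uv$ and let $C$ be an arbitrary circular ordering of $V(G)$. Writing $SP_3$ on the path $v_1v_2v_3$, the map $\varphi$ with $\varphi(v_1) = \varphi(v_3) = u$ and $\varphi(v_2) = v$ sends both edges $v_1v_2$ and $v_2v_3$ to $uv$, so it is a homomorphism of the underlying graphs; moreover every triple of the (unique) circular ordering of $SP_3$ uses all three of $v_1,v_2,v_3$, so under $\varphi$ it becomes a triple with a repeated coordinate, and the circular-order condition holds vacuously. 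Thus $SP_3 \to (G,C)$ for every $C$, so $G \notin \mathcal{C}_3$. Therefore $\mathcal{C}_3$ is also exactly the class of edgeless graphs, and combining the two paragraphs gives $G \in \mathcal{C}_3 \iff \chi_c(G) < 2$.

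The only delicate point is the final step of the converse: one has to use that a homomorphism of circularly ordered graphs need not be injective, so that a ternary constraint whose image has a repeated coordinate imposes no genuine restriction; this is exactly what lets a single edge of $G$ ``absorb'' the whole of $SP_3$. Everything else is a routine unwinding of definitions, together with the fact recalled earlier that $K_{\sfrac{p}{q}}$ is edgeless whenever $\sfrac{p}{q} < 2$.
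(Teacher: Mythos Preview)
Your proof is correct and follows essentially the same route as the paper's: both arguments reduce each side of the equivalence to the statement ``$G$ is edgeless'' and then conclude. The paper compresses your converse for $\mathcal{C}_3$ into the single remark that $SP_3$ maps homomorphically to the unique circular ordering of $K_2$, which is exactly your map $\varphi$ with $\varphi(v_1)=\varphi(v_3)$; your version is simply more explicit, and you are right to flag that this step relies on the convention that the ternary-relation condition is vacuous on triples whose image has a repeated coordinate.
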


\begin{proof}
On one hand, $SP_3$ maps homomorphically to the unique circular ordering of
$K_2$. So $G\in \mathcal{C}_3$ if and only if $G$ has no edges. On the other
hand, if $\sfrac{p}{q} < 2$ then $K_{\sfrac{p}{q}}$ is an edgeless graph. So by
Proposition~\ref{prop:min}, $\chi_c(G) < 2$ if and only if $G$ had no edges.
\end{proof}

Now, for every positive integer $k$, $k\ge 3$, we construct a sequence of graphs
$\{H^k_n\}_{n \ge 1}$ such that $\chi_c(H^k_n) < k$ and $H^k_n\in
\mathcal{C}_{k+1}$ for every $n \ge 1$. Consider a pair of positive integers $n$
and $k$, $k\ge 3$, the graph $H_n^k$ is defined as the rational complete graph
$K_{\sfrac{(kn-1)}{n}}$. In particular, $H^k_1 \cong K_{k-1}$ and $H^k_2 \cong
\overline{C}_{2k-1}$. In Figure~\ref{fig:H2H3} we depict $H^3_2$ and $H^3_3$.
Note that $H^3_3 \cong M_8$, where $M_8$ is the M\"obius ladder on eight
vertices.

%%%%----------- Figura con circulant

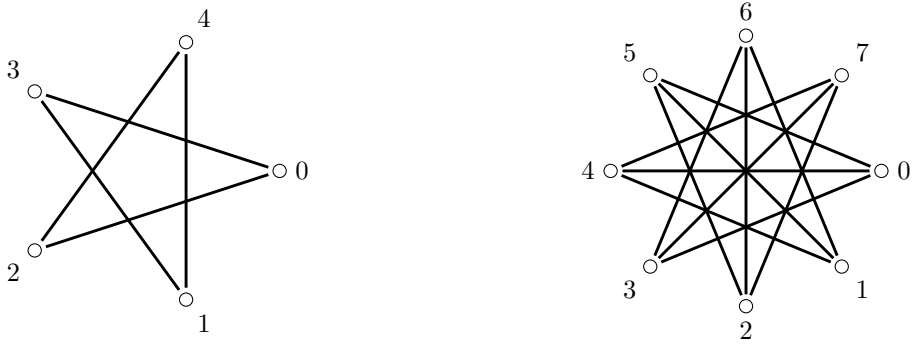
\begin{figure}[ht!]
\centering
\begin{tikzpicture}

\begin{scope}[xshift=-4cm, yshift = -4cm, scale=0.6]
\node [vertex, label=0:{$0$}] (0) at (0:3){};
\node [vertex, label=288:{$1$}] (1) at (288:3){};
\node [vertex, label=216:{$2$}] (2) at (216:3){};
\node [vertex, label=144:{$3$}] (3) at (144:3){};
\node [vertex, label=72:{$4$}] (4) at (72:3){};

\foreach \from/\to in {0/2, 0/3, 1/3, 1/4, 2/4}
  \draw [edge] (\from) to (\to);
\end{scope}

\begin{scope}[xshift=4cm, yshift = -4cm, scale=0.6]
\node [vertex, label=0:{$0$}] (1) at (0:3){};
\node [vertex, label=45:{$7$}] (2) at (45:3){};
\node [vertex, label=90:{$6$}] (3) at (90:3){};
\node [vertex, label=135:{$5$}] (4) at (135:3){};
\node [vertex, label=180:{$4$}] (5) at (180:3){};
\node [vertex, label=225:{$3$}] (6) at (225:3){};
\node [vertex, label=270:{$2$}] (7) at (270:3){};
\node [vertex, label=315:{$1$}] (8) at (315:3){};

\foreach \from/\to in {1/4,1/5,1/6,2/5,2/6,2/7,3/6,3/7,3/8,4/7,4/8,5/8}
  \draw [edge] (\from) to (\to);
\end{scope}

\end{tikzpicture}

\caption{The graphs $H^3_2$ (left) and $H^3_3$ (right).}
\label{fig:H2H3}
\end{figure}
%%%%%%%%%%%%---------%%%%%%%%%------

\begin{lemma}
\label{lem:sequence}
Let $G$ be a graph and $k$ a positive integer, $k\ge 3$. If $\chi_c(G) < k$ then
there is a positive integer $m$ such that $G \to H^k_m$.
\end{lemma}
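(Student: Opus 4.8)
The plan is to derive the statement from two ingredients already available: the fact (Proposition~\ref{prop:min}) that $\chi_c(G)$ is \emph{attained} as $\sfrac{p}{q}$ for some $p \le |V(G)|$ with $G \to K_{\sfrac{p}{q}}$, and the monotonicity of homomorphisms among rational complete graphs (Theorem~\ref{thm:pqhoms}). Recall that by definition $H^k_n = K_{\sfrac{(kn-1)}{n}}$, so $\chi_c(H^k_n) = k - \sfrac{1}{n}$; these values increase to $k$ from below. The key point is that when $\chi_c(G) < k$, writing $\chi_c(G) = \sfrac{p}{q}$ in the form guaranteed by Proposition~\ref{prop:min}, the gap $k - \sfrac{p}{q}$ is at least $\sfrac{1}{q}$, which is precisely $k - \chi_c(H^k_q)$; so $m = q$ should be the right choice.

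I would first dispose of the case $\chi_c(G) < 2$: by Proposition~\ref{prop:min} this forces $G$ to be edgeless, since a homomorphism into $K_{\sfrac{p}{q}}$ with $\sfrac{p}{q} < 2$ lands in an edgeless graph (recall $K_{\sfrac{p}{q}}$ is empty when $p < 2q$), and a homomorphic image of an edge would be an edge. An edgeless graph maps to the nonempty graph $K_{k-1} = H^k_1$ (nonempty as $k \ge 3$), so $m = 1$ works. Hence we may assume $\chi_c(G) \ge 2$.

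Now apply Proposition~\ref{prop:min} to obtain positive integers $p, q$ with $q \le p$, $2 \le \sfrac{p}{q} = \chi_c(G) < k$, and $G \to K_{\sfrac{p}{q}}$. From $\sfrac{p}{q} < k$ we get $p < kq$, hence $p \le kq - 1$ and therefore $\sfrac{p}{q} \le \sfrac{(kq-1)}{q}$. Since $k \ge 3$ we also have $\sfrac{(kq-1)}{q} \ge 2$ (equivalently $(k-2)q \ge 1$), so both fractions are at least $2$ and Theorem~\ref{thm:pqhoms} gives $K_{\sfrac{p}{q}} \to K_{\sfrac{(kq-1)}{q}} = H^k_q$. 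Composing with $G \to K_{\sfrac{p}{q}}$ yields $G \to H^k_q$, so $m = q$ is as required.

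I do not expect a genuine obstacle here: the argument is short once Proposition~\ref{prop:min} hands us a fraction realizing $\chi_c(G)$. The only care needed is in the boundary bookkeeping — treating edgeless graphs (equivalently, $\chi_c(G) < 2$) separately, and checking $\sfrac{(kq-1)}{q} \ge 2$ so that Theorem~\ref{thm:pqhoms} is actually applicable — together with the observation that the correct value of $m$ is simply the denominator $q$ of the optimal fraction for $G$, rather than something obtained by clearing denominators in a more elaborate way.
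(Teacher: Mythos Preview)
Your proof is correct and follows essentially the same approach as the paper: use Proposition~\ref{prop:min} to realize $\chi_c(G)$ as some $\sfrac{p}{q}$ with $G \to K_{\sfrac{p}{q}}$, then invoke Theorem~\ref{thm:pqhoms} to pass to a suitable $H^k_m$. You are slightly more explicit than the paper --- naming $m = q$ directly and separating out the edgeless case so that the hypothesis $\sfrac{p}{q} \ge 2$ of Theorem~\ref{thm:pqhoms} is genuinely in force --- whereas the paper simply appeals to the convergence $\sfrac{(kn-1)}{n} \nearrow k$ to assert the existence of $m$.
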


\begin{proof}
It is not hard to observe that $\{\frac{kn-1}{n}\}_{n \in \mathbb{Z}^+}$ is an
increasing sequence that converges to $k$. So for any rational number $r$ such
that $r < k$, there is a positive integer $m$ such that $r \le \frac{km-1}{m}$.
Consider a graph $G$ such that $\chi_c(G) < k$. By Proposition~\ref{prop:min}
there is a rational number $\sfrac{p}{q}$ such that $G \to K_{\sfrac{p}{q}}$ and
$\sfrac{p}{q} < k$. Let $m$ be a positive integer such that $\sfrac{p}{q} \le
\sfrac{(km-1)}{m} < k$. By Theorem~\ref{thm:pqhoms}, $K_{\sfrac{p}{q}}\to
K_{\sfrac{(km-1)}{m}}$ which concludes the proof since $G \to K_{\sfrac{p}{q}}$
and $H_m^k = K_{\sfrac{(km-1)}{m}}$.
\end{proof}

\begin{proposition}
\label{prop:circhom}
Let $G$ be a graph and $k$ a positive integer, $k\ge 3$. If $\chi_c(G) < k$ then
$G\in \mathcal{C}_{k+1}$.
\end{proposition}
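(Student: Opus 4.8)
The plan is to assemble the two preceding results. By Lemma~\ref{lem:sequence}, $\chi_c(G) < k$ yields a positive integer $m$ with $G \to H^k_m$, and by Observation~\ref{obs:hompreimage} the class $\mathcal{C}_{k+1}$ is closed under homomorphic pre-images; hence it suffices to prove that $H^k_m \in \mathcal{C}_{k+1}$ for every $m \ge 1$. Thus the whole content of the proposition reduces to exhibiting, for each $m$, a circular ordering of $V(H^k_m)$ that admits no homomorphism (of circularly ordered graphs) from $SP_{k+1}$.

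Recall $H^k_m = K_{\sfrac{(km-1)}{m}}$, with vertex set $\{0,1,\dots,km-2\}$ and $ij \in E$ if and only if the circular distance between $i$ and $j$ in $\mathbb{Z}_{km-1}$ is at least $m$. The natural candidate is the circular ordering $c(\le)$ determined by $0 \le 1 \le \cdots \le km-2$, i.e.\ the vertices placed around the circle in their natural cyclic order. I first want to unpack what a homomorphism $\varphi \colon SP_{k+1} \to (H^k_m, c(\le))$ would be. Writing $SP_{k+1} = u_1 \cdots u_{k+1}$ with circular order $c(u_1 \le \cdots \le u_{k+1})$, for every $i < j < \ell$ the triple $(u_i,u_j,u_\ell)$ lies in this circular order, so $\varphi$ must send it to a triple of the circular order of $(H^k_m, c(\le))$; since a circular ordering only relates pairwise-distinct triples, $\varphi$ is forced to be injective, and moreover the images $w_i := \varphi(u_i)$ must appear in exactly this cyclic order when the circle is traversed clockwise. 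Combined with edge preservation, the existence of $\varphi$ is therefore equivalent to the existence of distinct vertices $w_1, \dots, w_{k+1}$ occurring in this clockwise cyclic order around the circle with $w_iw_{i+1} \in E(H^k_m)$ for every $i \in \{1,\dots,k\}$.

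To rule this out, let $g_i$ be the clockwise distance (number of steps in the cyclic order $0,1,\dots,km-2,0$) from $w_i$ to $w_{i+1}$ for $i \in \{1,\dots,k\}$, and let $g_{k+1}$ be the clockwise distance from $w_{k+1}$ back to $w_1$. Since the $w_i$ are distinct and wrap around the circle exactly once, $g_1 + \cdots + g_{k+1} = km-1$ and each $g_i \ge 1$. On the other hand, each edge $w_iw_{i+1}$ of $H^k_m$ has circular distance at least $m$, which forces $g_i \ge m$ for $i \in \{1,\dots,k\}$; summing, $g_1 + \cdots + g_k \ge km$. But $g_1 + \cdots + g_k = (km-1) - g_{k+1} \le km - 2$, a contradiction. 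Hence no such $w_1,\dots,w_{k+1}$ exist, $SP_{k+1} \not\to (H^k_m, c(\le))$, and $H^k_m \in \mathcal{C}_{k+1}$, which together with the reduction above completes the argument.

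I expect the only real subtlety, rather than a genuine obstacle, to be the translation step: observing that a homomorphism of circularly ordered graphs out of $SP_{k+1}$ must be injective with image appearing in the prescribed cyclic order. Once that is in place the counting is immediate. One should also sanity-check the degenerate case $m=1$, where $H^k_1 = K_{k-1}$ has too few vertices to receive $SP_{k+1}$ at all; this is consistent with (and subsumed by) the inequality, since there $g_1 + \cdots + g_k \ge km > km - 2 = |V(H^k_1)| - 1$.
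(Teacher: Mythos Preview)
Your proof is correct and follows essentially the same route as the paper: reduce via Lemma~\ref{lem:sequence} and Observation~\ref{obs:hompreimage} to showing $H^k_m \in \mathcal{C}_{k+1}$, take the natural cyclic ordering of $\{0,\dots,km-2\}$, and obtain a contradiction by counting. The paper normalizes $\varphi(v_1)=0$ using vertex-transitivity and tracks positions $u_i$ to bound $u_k \ge (k-1)m$, whereas you phrase the same inequality via the clockwise gaps $g_i$; these are equivalent computations. One small difference is that you argue the homomorphism must be injective (which is indeed forced by the relational-structure definition, since circular-order triples consist of distinct elements), while the paper explicitly allows $u_1 = u_{k+1}$; the paper's version is thus slightly more permissive but leads to the same contradiction, and your injectivity observation makes the counting a touch cleaner.
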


\begin{proof}
By Lemma~\ref{lem:sequence}, if the circular chromatic number of a graph $G$ is
strictly less than $k$, then there is a positive integer $m$ such that $G \to
H_m^k$. Also, recall that, by Observation~\ref{obs:hompreimage}, for every
positive integer $k$ the class of graphs $\mathcal{C}_{k+1}$ is closed under
homomorphic preimages. Hence, if $H_m^k\in \mathcal{C}_{k+1}$ for every $m \ge
1$, then any graph with circular chromatic number strictly less than $k$ belongs
to $\mathcal{C}_k$. So we proceed to prove that for every positive integer $m$
the graph $H_m^k$ belongs to $\mathcal{C}_{k+1}$. To do so, we consider the
canonical circular ordering $c(\le)$ of the vertices of $H_m^k$, i.e., the
circular closure of $0 \le 1 \le \cdots \le km-2$. We want to prove that
$SP_{k+1}\not\to (H_m^k,c(\le))$; we proceed by contradiction.

Suppose there is a homomorphism $\varphi\colon SP_{k+1} \to (H_m^k,c(\le))$.
Since $(H_m^k, c(\le))$ is a vertex-transitive circularly ordered graph, we can
assume that $\varphi(v_1) = 0$. Let $u_i$ be the image of $v_i$ and note that
the only indices for which $u_i$ might be equal to $u_j$ are $i = 1$ and $j =
k+1$; the remaining pairs of vertices $u_i$ and $u_j$ must be different if $i
\ne j$. So there are $k$ different vertices $0 = u_1 < u_2 < \cdots < u_k <
km-1$ such that $u_iu_{i+1} \in E(H_m^k)$ for every $i \in \{1, \dots, k-1\}$,
and a vertex $u_{k+1} \in \{u_k+1, u_k+2, \dots, km-2, 0\}$ such that
$u_ku_{k+1} \in E(G)$. The existence of $u_{k+1}$ will yield the contradiction.
Recall that $H_m^k = K_{\sfrac{(km-1)}{m}}$, so there is an edge $rs \in
E(H_m^k)$ if and only if the circular distance between $r$ and $s$ is at least
$m$. Since $0 = u_1 < \dots <u_k$ is an increasing sequence in $\{0, 1, \dots,
km-1\}$, then $u_{i+1} - u_i \ge m$, so $u_k \ge (k-1)m$. Therefore, since
$u_{k+1} \in \{u_k+1, u_k+2, \cdots, km-2, 0\}$, then the circular distance
between $u_k$ and $u_{k+1}$ is at most the circular distance between $0$ and
$(k-1)m$ which is $km-1 - (k-1)m$ which equals $m-1$. Thus, $u_k$ and $u_{k+1}$
are not adjacent vertices, which contradicts the fact that $v_kv_{k+1}\in
E(SP_{k+1})$ and $\varphi(v_k) = u_k$ and $\varphi(v_{k+1}) = u_{k+1}$. Hence,
$SP_{k+1}\not \to (H_m^k, c(\le))$, so $H_m^k\in \mathcal{C}_{k+1}$.
\end{proof}

For every positive integer $k$, $k \ge 4$, we construct a set of linearly
ordered graphs, $\mathcal{PH}_k$ as follows. The \textit{straight cycle} on $k$
vertices $StC_k$, consists of the $k$-cycle, $v_1 \cdots v_k v_1$, where $v_i
\le v_j$ if and only if $i \le j$. The \textit{shifted straitgh path} on $k$
vertices, $sSt_k$, consists of the path on $k$ vertices, $v_1 \cdots v_k$, where
$v_k \le v_i$ for every $i\in\{1,\cdots, k\}$, and $v_i \le v_j$ for every $i,j
\in \{1, \dots, k-1\}$. Define $\mathcal{PH}_k$ as the set generated by all
linearly ordered spanning supergraphs of $\{St_k, sSt_k,StC_k, StC_{k-1}\}$.
In Figure~\ref{fig:PH} we depict these four generating linearly ordered graphs.

%%%%----------- TABLA CON F_6

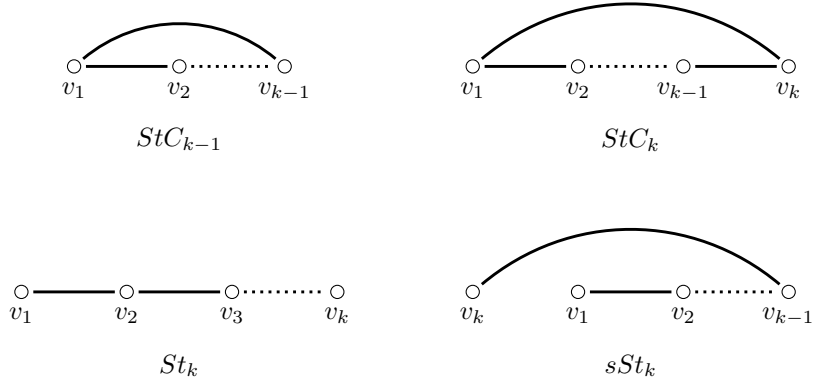
\begin{figure}[ht!]
\centering
\begin{tikzpicture}

\begin{scope}[xshift=-3cm, yshift = 3cm, scale=0.7]%StCk-1
\node [vertex, label=below:{$v_1$}] (1) at (-2,0){};
\node [vertex, label=below:{$v_2$}] (2) at (0,0){};
\node [vertex, label=below:{$v_{k-1}$}] (3) at (2,0){};
\node at (0,-1.4){$StC_{k-1}$};

\draw[edge]    (1)  edge  (2);
\draw[edge]    (2)  edge  [dotted] (3);
\draw[edge]    (3)  edge [bend right=40]   (1);
\end{scope}

\begin{scope}[xshift=3cm, yshift=3cm, scale=0.7]%StC_k
\node [vertex, label=below:{$v_1$}] (1) at (-3,0){};
\node [vertex, label=below:{$v_2$}] (2) at (-1,0){};
\node [vertex, label=below:{$v_{k-1}$}] (3) at (1,0){};
\node [vertex, label=below:{$v_k$}] (4) at (3,0){};
\node at (0,-1.4){$StC_{k}$};

\draw[edge]    (1)  edge  (2);
\draw[edge]    (3)  edge  (4);
\draw[edge]    (2)  edge  [dotted] (3);
\draw[edge]    (4)  edge [bend right=40]   (1);
\end{scope}

\begin{scope}[xshift=-3cm, yshift=0cm, scale=0.7]%StPk
\node [vertex, label=below:{$v_1$}] (1) at (-3,0){};
\node [vertex, label=below:{$v_2$}] (2) at (-1,0){};
\node [vertex, label=below:{$v_3$}] (3) at (1,0){};
\node [vertex, label=below:{$v_k$}] (4) at (3,0){};
\node at (0,-1.4){$St_k$};

\draw[edge]    (1)  edge  (2);
\draw[edge]    (2)  edge  (3);
\draw[edge]    (3)  edge   [dotted] (4);
\end{scope}

\begin{scope}[xshift=3cm, yshift = 0cm, scale=0.7]%sStPk
\node [vertex, label=below:{$v_k$}] (1) at (-3,0){};
\node [vertex, label=below:{$v_1$}] (2) at (-1,0){};
\node [vertex, label=below:{$v_2$}] (3) at (1,0){};
\node [vertex, label=below:{$v_{k-1}$}] (4) at (3,0){};
\node at (0,-1.4){$sSt_k$};

\draw[edge]    (4)  edge [bend right=40]  (1);
\draw[edge]    (2)  edge  (3);
\draw[edge]    (3)  edge   [dotted] (4);
\end{scope}

\end{tikzpicture}

\caption{The four generating linearly ordered graphs of $\mathcal{PH}_k$.}
\label{fig:PH}
\end{figure}
%%%%%%%%%%%%---------%%%%%%%%%------

\begin{proposition}
\label{prop:Hcirctolinear}
Let $G$ be a graph. If $G$ admits an $\mathcal{H}_k$-free circular ordering then
$G$ admits a $\mathcal{PH}_k$-free linear ordering.
\end{proposition}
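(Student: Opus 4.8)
The plan is to reduce the statement, via Observation~\ref{obs:circtolinear}, to the following purely combinatorial claim: \emph{every linearly ordered graph in $\mathcal{PH}_k$ contains, as an induced linearly ordered subgraph, a member of $L(\mathcal{H}_k)$.} Granting this, the proposition follows: if $G$ admits an $\mathcal{H}_k$-free circular ordering then by Observation~\ref{obs:circtolinear} it admits an $L(\mathcal{H}_k)$-free linear ordering $(G,\le)$, and the claim forces $(G,\le)$ to avoid $\mathcal{PH}_k$ as well, since any induced copy of a member of $\mathcal{PH}_k$ inside $(G,\le)$ would itself contain an induced copy of a member of $L(\mathcal{H}_k)$.

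To prove the claim I would first record the fact, implicit in the construction of $\mathcal{H}_k$, that a circularly ordered graph $(H,C)$ satisfies $SP_k\to (H,C)$ if and only if $(H,C)$ contains an induced copy of some member of $\mathcal{H}_k$ (one direction composes $SP_k\to M\hookrightarrow(H,C)$; the other passes from a receiver of $SP_k$ down to a minimal one). The heart of the argument is then to verify, for each of the four generating linearly ordered graphs $X\in\{St_k,\,sSt_k,\,StC_k,\,StC_{k-1}\}$ of $\mathcal{PH}_k$, that its circular closure $c(X)$ receives a homomorphism from $SP_k$. I expect this to amount to four short observations: $c(St_k)\cong SP_k$; $c(sSt_k)\cong SP_k$ as well, because the circular closure of a linear order is insensitive to cyclic shifts; $c(StC_k)\cong SC_k$, into which $SP_k$ maps by deleting one edge; and $c(StC_{k-1})\cong SC_{k-1}$, which is precisely the homomorphic image of $SP_k$ obtained by identifying its two endpoints. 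Passing to spanning supergraphs is harmless: if $(H,\le_H)$ is a spanning linearly ordered supergraph of such an $X$, then $c(X)$ is a spanning circularly ordered subgraph of $(U_H,c(\le_H))$, so $SP_k\to c(X)\hookrightarrow (U_H,c(\le_H))$.

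Putting these together, any $(H,\le_H)\in\mathcal{PH}_k$ satisfies $SP_k\to (U_H,c(\le_H))$, hence $(U_H,c(\le_H))$ contains an induced member $M$ of $\mathcal{H}_k$ on some vertex set $T$; the induced linearly ordered subgraph $(U_H[T],\le_H|_T)$ then has circular closure isomorphic to $M$, so $(U_H[T],\le_H|_T)\in L(\mathcal{H}_k)$, which establishes the claim. The step I expect to require the most care is not deep but bookkeeping-heavy: checking that the homomorphism $SP_k\to SC_{k-1}$ is legitimate even though it is not injective --- the two vertices being identified must be consecutive in the circular ordering underlying $SP_k$, which they are --- and keeping the word ``induced'' consistent while moving among $\mathcal{PH}_k$, the spanning supergraphs that generate it, and the circular closures of all these objects.
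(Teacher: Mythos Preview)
Your approach is essentially the paper's: both invoke Observation~\ref{obs:circtolinear} to pass from an $\mathcal{H}_k$-free circular ordering to an $L(\mathcal{H}_k)$-free linear ordering, and then argue that this forces $\mathcal{PH}_k$-freeness. The paper compresses the second step into the single assertion ``$\mathcal{PH}_k \subseteq L(\mathcal{H}_k)$'', whereas you prove the (slightly weaker but sufficient, and more literally accurate) statement that every member of $\mathcal{PH}_k$ contains an induced member of $L(\mathcal{H}_k)$, carrying out the four-generator verification that the paper leaves implicit.
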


\begin{proof}
Recall that $L$ is the function that ``linearizes'' a set of circularly ordered
graphs. Observation~\ref{obs:circtolinear} asserts that if $F$ is a set of
circularly ordered graphs that describes a property by forbidden circularly
ordered graphs, then $L(F)$ describes the same property by forbidden linearly
ordered graphs. So the statement of this proposition follows by this observation
and the fact that $\mathcal{PH}_k \subseteq L(\mathcal{H}_k)$.
\end{proof}

The following statement is a simple technical lemma that will be useful to prove
our main result of this section.

\begin{lemma}
\label{lem:technical}
Let $k$ be a positive integer, $k \ge 4$, let $G'$ be an acyclic oriented graph
with no directed path on $k$ arcs, and let $W$ be an oriented path in $G'$.
The following two assertions hold.
\begin{enumerate}
	\item If $|A(W)| = kn$ for some positive integer $n$ then, $|W^+| \le (k-1)n$
    and $|W^-| \ge n$.

	\item If the final arc of $W$ is a backward arc, then there is a positive
    integer $n$ such that $|W^+| \le (k-1)n$ and ${|W^-|} \ge n$ (regardless of
    $|A(W)|$).
\end{enumerate}
\end{lemma}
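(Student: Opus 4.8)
The plan is to reduce both assertions to a single counting argument about forward and backward arcs along an oriented path in an acyclic digraph with bounded directed-path length. The key observation I would isolate first is the following: if $G'$ is acyclic with no directed path on $k$ arcs, then along \emph{any} maximal run of consecutive forward arcs of $W$, there can be at most $k-1$ of them in a row, and the same for backward arcs; more precisely, between two ``direction changes'' of $W$ there are at most $k-1$ arcs of the same orientation, since a monochromatic stretch of $W$ is itself a directed path in $G'$ (or in the reverse of $G'$, which is also acyclic with the same bound). I would formalise $W$ as an alternating concatenation of maximal monochromatic blocks $B_1, B_2, \dots, B_t$, each of length between $1$ and $k-1$, with consecutive blocks having opposite orientation.

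For part (1), suppose $|A(W)| = kn$. I would pair up the blocks: group consecutive blocks into ``forward block followed by backward block'' pairs (handling the parity of $t$ and the orientation of $B_1$ with a small case analysis, possibly absorbing a leftover block). The point is that each forward block has length $\le k-1$ and is ``charged'' against at least one backward arc, so a window of $k$ consecutive arcs of $W$ cannot be all forward — hence in any $k$ consecutive arcs at least one is backward. Cutting $A(W)$ into $n$ disjoint windows of $k$ arcs each gives $|W^-| \ge n$ immediately, and then $|W^+| = kn - |W^-| \le kn - n = (k-1)n$. The cleanest phrasing: since a directed subpath has at most $k-1$ arcs, every $k$ consecutive arcs of $W$ contain a backward arc, so among $kn$ arcs there are at least $n$ backward ones.

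For part (2), the final arc of $W$ is backward. I would let $\ell = |A(W)|$ and write $\ell = k\lfloor \ell/k\rfloor + s$ with $0 \le s \le k-1$; apply the windowing argument to the first $k\lfloor \ell/k \rfloor$ arcs to get $\lfloor \ell/k\rfloor$ backward arcs there, but I still need the bound $|W^+| \le (k-1)n$ with the \emph{same} $n$, so I cannot just discard the last $s$ arcs. Instead I would argue: because the last arc is backward, the final (short) block is a backward block, so I can extend the last full window backward to include it, or rather re-slice $W$ from the \emph{end}: the final $s$ arcs together with the preceding backward arc still cannot be all forward (the very last is backward), so reading $W$ in $\lfloor \ell/k\rfloor$ windows of size $k$ from the front plus noting the leftover tail contains at least one backward arc only helps. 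The honest fix is: set $n = \lceil |W^+|/(k-1) \rceil$; then $|W^+| \le (k-1)n$ by definition, and I must show $|W^-| \ge n$, i.e. $|W^-| \ge \lceil |W^+|/(k-1)\rceil$, equivalently $(k-1)|W^-| \ge |W^+|$. This last inequality is exactly what the block decomposition gives when every forward block has size $\le k-1$ and every forward block is followed by a backward block — and the hypothesis that the last arc is backward guarantees precisely that no forward block is ``dangling'' at the end without a backward block after it, so the injection ``forward block $\mapsto$ first backward arc of the next block'' maps the $\le k-1$ arcs of each forward block to one backward arc, yielding $|W^+| \le (k-1)|W^-|$.

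The main obstacle I anticipate is the bookkeeping in part (2): making sure the chosen $n$ simultaneously satisfies \emph{both} inequalities and checking that the ``final arc backward'' hypothesis is used correctly to prevent a terminal forward block from ruining the forward-to-backward charging. I would also double-check the base fact that a monochromatic stretch of an oriented path $W$ inside $G'$ really is a directed path in $G'$ (it is, since $W$ is a path so its vertices are distinct), so the ``no directed path on $k$ arcs'' hypothesis applies and caps every such stretch at $k-1$ arcs. Everything else — the windowing, the ceiling arithmetic — is routine once the block structure is set up.
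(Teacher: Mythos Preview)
Your plan is correct. For part~(1) your windowing argument (each block of $k$ consecutive arcs must contain a backward arc, since a run of forward arcs is a directed path in $G'$ of length at most $k-1$) is exactly what the paper's one-line ``induction on $n$'' unpacks to, so there is no real difference there.

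For part~(2) you take a genuinely different route. The paper writes $|A(W)| = km + l$ with $0\le l\le k-1$, applies part~(1) to the initial subpath on $km$ arcs to get at least $m$ backward arcs there, and then uses the hypothesis only to say that among the trailing $l$ arcs the very last one is backward; this yields $|W^-|\ge m+1$ and $|W^+|\le (k-1)m + l \le (k-1)(m+1)$, so $n=m+1$ works. Your block-charging argument instead proves the inequality $|W^+|\le (k-1)\,|W^-|$ directly: every maximal forward block has at most $k-1$ arcs and, because the final arc is backward, is followed by a backward block, so the number of forward blocks is at most the number of backward blocks, which is at most $|W^-|$. Taking $n=\lceil |W^+|/(k-1)\rceil$ then gives both bounds at once. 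Your version is a bit more structural and makes the role of the ``final arc backward'' hypothesis completely transparent (it prevents a dangling forward block), while the paper's version is shorter and reuses part~(1). One small edge case to patch in your write-up: if $|W^+|=0$ then your $n$ is $0$, not positive; just take $n=1$ in that trivial case.
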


\begin{proof}
The first statement follows easily by induction on $n$. To prove the second
statement we will assume that $|A(W)| = km + l$ for some integer $l$, $0 \le l
\le k-1$. If $l = 0$ the second statement is a particular case of the first one.
Suppose $1 \le l \le k-1$ and let $W'$ be the subpath of $W$ obtained by
removing the final $l$ vertices. Clearly,  $|A(W')| = km$ and so by the first
statement $|W'^+| \le (k-1)m$ and $|W'^-| \ge m$. Since the final arc of $W$ is
a backward arc, then $|W^-| \ge m+1$ and $|W^+| \le (k-1)m + l \le (k-1)m + k-1
\le (k-1)(m+1)$. By letting $n = m+1$ the statement follows.
\end{proof}

\begin{theorem}
\label{thm:kcirc}
For any graph $G$ and a positive integer $k$, $k\ge 3$, the following statements
are equivalent:
\begin{itemize}
	\item $G$ admits a circular ordering $c(\le)$ such that $SP_{k+1} \not \to
    (G,c(\le))$,

	\item $G$ admits a $\mathcal{PH}_{k+1}$-free linear ordering, and

	\item the circular chromatic number of $G$ is strictly less than $k$.
\end{itemize}
\end{theorem}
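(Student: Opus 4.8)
The plan is to prove the three statements equivalent by establishing the cycle of implications $(1)\Rightarrow(2)\Rightarrow(3)\Rightarrow(1)$, using the machinery already assembled. The implication $(3)\Rightarrow(1)$ is immediate: it is exactly the content of Proposition~\ref{prop:circhom}. The implication $(1)\Rightarrow(2)$ is also essentially done, since $G\in\mathcal{C}_{k+1}$ means $G$ admits an $\mathcal{H}_{k+1}$-free circular ordering, and then Proposition~\ref{prop:Hcirctolinear} gives a $\mathcal{PH}_{k+1}$-free linear ordering. So the real work is the implication $(2)\Rightarrow(3)$: from a $\mathcal{PH}_{k+1}$-free linear ordering $\le$ of $V(G)$, produce a witness that $\chi_c(G)<k$.

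For $(2)\Rightarrow(3)$ I would use Theorem~\ref{thm:orientationsCirc}. Given the $\mathcal{PH}_{k+1}$-free linear ordering $\le$, orient each edge $xy$ of $G$ from the smaller to the larger endpoint; call this orientation $G'$. Since $\le$ is a linear order, $G'$ is acyclic. The key observation is that forbidding the spanning supergraphs of $St_{k+1}$ as a linearly ordered subgraph means there is no increasing path $v_1\le\cdots\le v_{k+1}$ with all consecutive pairs adjacent — i.e.\ $G'$ has \emph{no directed path on $k$ arcs}. This is precisely the hypothesis of the technical Lemma~\ref{lem:technical}. Now fix any cycle $C$ of $G$ (if $G$ is a forest we are trivially done since $\chi_c(G)=2<k$); traverse it as a closed walk $W$ in $G'$. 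The goal is to show $1+|C^+|/|C^-| < k$, equivalently $|C^+| < (k-1)|C^-|$, for then Theorem~\ref{thm:orientationsCirc} yields $\chi_c(G)<k$.

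The heart of the argument — and the main obstacle — is getting the strict inequality $|C^+| < (k-1)|C^-|$ from Lemma~\ref{lem:technical}, which only gives $|W^+|\le (k-1)n$ and $|W^-|\ge n$. Here the forbidden structures $StC_{k+1}$, $StC_k$, and $sSt_{k+1}$ come into play. If $|A(C)|=(k+1)n$, Lemma~\ref{lem:technical}(1) gives $|C^+|\le kn$ and $|C^-|\ge n$, but I need strictness, which should follow because equality would force the walk to look like a concatenation of copies of the forbidden straight cycles/paths. Concretely: the cycle $C$, being closed, must contain a backward arc somewhere; by re-choosing the starting vertex I can assume the final arc of $W$ is backward and apply Lemma~\ref{lem:technical}(2), but to rule out the boundary case $|C^+|=(k-1)|C^-|$ exactly I would argue that such a $C$ would contain either $StC_{k+1}$ or $StC_k$ (when the arc-count is $\equiv 0$ or $\equiv -1 \bmod (k+1)$, corresponding to a "tight" cyclic block) or a $sSt_{k+1}$ (when a long increasing run wraps around past the start vertex), each of which is in $\mathcal{PH}_{k+1}$ and hence forbidden. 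I expect the careful bookkeeping of which forbidden pattern appears in which residue case, and verifying that the $\le$-order on the cycle's vertices actually matches the linearly ordered pattern (in particular handling the ``wrap-around'' vertex that is $\le$-minimal, which is what $sSt$ encodes), to be the delicate part; once that is pinned down, summing $1+|C^+|/|C^-| \le 1 + \frac{(k-1)|C^-|-1}{|C^-|} < k$ over all cycles and taking the orientation $G'$ as the witness completes the proof via Theorem~\ref{thm:orientationsCirc}.
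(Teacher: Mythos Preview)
Your cycle of implications and the orientation-from-$\le$ strategy for $(2)\Rightarrow(3)$ match the paper exactly. The gap is in how you extract the \emph{strict} inequality $|C^+|/|C^-|<k-1$.

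First, a parameter slip: $G'$ has no directed path on $k$ arcs (because $St_{k+1}\in\mathcal{PH}_{k+1}$), so Lemma~\ref{lem:technical} applies with parameter $k$, not $k+1$; the correct output is $|W^+|\le (k-1)n$, $|W^-|\ge n$ when $|A(W)|=kn$. Your line ``$|A(C)|=(k+1)n$ gives $|C^+|\le kn$'' is off by one throughout.

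Second, and more substantively, your plan ``re-choose the start so the last arc of $W$ is backward and then rule out the boundary case $|C^+|=(k-1)|C^-|$ by finding $StC_{k+1}$, $StC_k$ or $sSt_{k+1}$'' is not how the paper gets strictness, and as written it does not go through: merely having the closed walk end on a backward arc and invoking Lemma~\ref{lem:technical}(2) yields only $|C^+|/|C^-|\le k-1$, and a long cycle achieving equality need not contain $StC_{k+1}$ (that pattern has ratio $k$, not $k-1$, so it is irrelevant here). The paper instead anchors the traversal at the $\le$-minimum vertex $c_1$ of $C$ and \emph{removes} the closing arc $c_mc_1$ from the path $W=c_1Cc_m$. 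That closing arc is automatically backward (since $c_1$ is $\le$-minimal), which buys one extra unit in the denominator: $|C^-|=|W^-|+1$. If the last arc of $W$ is already backward, Lemma~\ref{lem:technical}(2) gives $|C^+|/|C^-|\le (k-1)l/(l+1)<k-1$ immediately. If not, let $s$ be the length of the final increasing run of $W$; the forbidden $sSt_{k+1}$ (when $c_1\neq c_{m-(k-1)}$) or $StC_k$ (when the cycle has length $k$) forces $s\le k-2$, and then Lemma~\ref{lem:technical}(2) applied to the truncated path plus the bound on $s$ again gives $|C^+|/|C^-|\le\bigl((k-1)l+s\bigr)/(l+1)<k-1$. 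So the strictness comes not from a separate boundary-case analysis but from the free backward arc at the $\le$-minimum together with the bound on the terminal forward run; $StC_{k+1}$ plays no role in this direction.
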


\begin{proof}
Proposition~\ref{prop:Hcirctolinear} shows that the first item implies the
second one, while Proposition~\ref{prop:circhom} asserts that the third one
implies the first item. We now prove that the second statement implies the third
one. To do so, let $(G,\le)$ be a $\mathcal{PH}_{k+1}$-free linear ordering of a
graph $G$, and consider the orientation $G'$ of $G$ obtained by orienting every
edge $xy\in E(G)$ from $x$ to $y$ if $x\le y$. This is clearly an acyclic
orientation of $G$. We will show that  for every cycle $C$ of $G$ the strict
inequality $\frac{|C^+|}{|C^-|} < k-1$ holds, and thus, by
Theorem~\ref{thm:orientationsCirc} we conclude that $\chi_c(G) \le 1 +
\frac{|C^+|}{|C^-|} < k$.

Since the straight path on $k+1$ vertices belongs to $\mathcal{PH}_{k+1}$, then
$G'$ has no directed path on $k$ arcs. Let $C = c_1 \cdots c_mc_1$ be a cycle of
$G$ and without loss of generality assume that $c_1$ is the first vertex of $C$
with respect to $\le$. Moreover, we will assume that $m \ge k-1$; otherwise it
is immediate that $\frac{|C^+|}{|C^-|} < k-1$. We begin by first considering the
case when $c_m \le c_{m-1}$. In this case, $(c_m,c_{m-1}) \in A(G')$ and so the
oriented path $W = c_1Cc_m$ ends with a backward arc. Since $G'$ has no directed
path on $k$ arcs, by Lemma~\ref{lem:technical} there is an integer $l$ such that
$|W^+| \le (k-1)l$ and $|W^-| \ge l$. The only remaining arc in $C$ that does
not belong to $W$ is the arc $(c_1,c_m)$ and it is a backward arc in the
direction we are traversing $C$. Thus $|C^-| = |W^-|+1\ge l +1$ and $|C^+| =
|W^+| \le (k-1)l$, so $\frac{|C^+|}{|C^-|} \le \frac{(k-1)l}{(l+1)} < k-1$. Now
suppose that $c_{m-1} \le c_m$. Let $s$ be the maximum integer such that
$c_{m-i} \le c_{m-i+1}$ for every $i\in \{1, \dots, s\}$.

\begin{claim}\label{clm:1}
The strict inequality $s < k-1$ holds.
\end{claim}

Indeed, if $s \ge k-1$ then we have the following structure: $c_1 \le
c_{m-(k-1)} \le c_{m - (k-2)} \le \cdots \le c_m$ where $c_1c_m\in E(G)$ and
$c_{m-i}c_{m-i+1}\in E(G)$ for every $i\in \{1,\dots, k-1\}$. Regardless of
whether $c_1 = c_{m-(k-1)}$ or $c_1 \ne c_{m-(k-1)}$, we can find either $StC_k$
or $sSt_{k+1}$ as a linearly ordered subgraph of $(G,\le)$ which contradicts the
fact that $(G,\le)$ is $\mathcal{PH}_{k+1}$-free.  This concludes the proof of
Claim \ref{clm:1}.

By definition of $s$, we know that $c_{m-s} \le c_{m-(s+1)}$ so $(c_{m-s},
c_{m-(s+1)}) \in A(G')$. Hence, $(c_{m-s},c_{m-(s+1)})$ is a backward arc in the
oriented path $W = c_1Cc_{m-s}$. Again, by Lemma~\ref{lem:technical}, there is a
positive integer $l$ such that $|W^+|\le (k-1)l$ and $|W^-|\ge l$. Thus, $|C^+|
= |W^+| + s \le (k-1)l + s$ and $|C^-|  = |W^-| + 1 \ge l +1$, so
$\frac{|C^+|}{|C^-|} \le \frac{(k-1)l + s}{l+1}$. By Claim~\ref{clm:1} we know
that $s < k-1$ and therefore $\frac{|C^+|}{|C^-|} < \frac{(k-1)l + k-1}{l+1} =
k-1$. This shows that the orientation $G'$ satisfies that for every cycle $C$ of
$G$ the strict inequality $\frac{|C^+|}{|C^-|} < k-1$ holds. So by
Theorem~\ref{thm:orientationsCirc} we conclude that $\chi_c(G) \le 1 +
\frac{|C^+|}{|C^-|} < k$.
\end{proof}

Recall that $\mathcal{H}_{k+1}$ is a finite set of circularly ordered graphs
such that a graph belongs to $\mathcal{C}_k$ if and only if $G$ admits a
$\mathcal{H}_{k+1}$-free circular ordering.

\begin{corollary}
\label{cor:circularbound}
Let $k$ be a positive integer $k$, $k\ge 2$, and let $G$ be a graph. Then,
$\chi_c(G) < k$ if and only if $G$ admits an $\mathcal{H}_{k+1}$-free circular
ordering.
\end{corollary}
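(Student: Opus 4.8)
The plan is to observe that the corollary is essentially a repackaging of results already established, once one recalls how $\mathcal{H}_{k+1}$ was defined. For every positive integer $k\ge 2$, the finite set $\mathcal{H}_{k+1}$ was constructed precisely so that a graph $G$ lies in $\mathcal{C}_{k+1}$ --- that is, $G$ admits a circular ordering $c(\le)$ with $SP_{k+1}\not\to (G,c(\le))$ --- if and only if $G$ admits an $\mathcal{H}_{k+1}$-free circular ordering. Hence it suffices to prove the equivalence $\chi_c(G) < k \iff G\in\mathcal{C}_{k+1}$, and then quote this defining property of $\mathcal{H}_{k+1}$.

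I would then split on the value of $k$. When $k\ge 3$, the equivalence is immediate from Theorem~\ref{thm:kcirc}: two of the three mutually equivalent statements listed there are ``$G$ admits a circular ordering $c(\le)$ such that $SP_{k+1}\not\to(G,c(\le))$'' (which is exactly membership of $G$ in $\mathcal{C}_{k+1}$) and ``$\chi_c(G) < k$''. When $k=2$, Theorem~\ref{thm:kcirc} does not apply, but the equivalence $G\in\mathcal{C}_3 \iff \chi_c(G) < 2$ is precisely the content of Observation~\ref{obs:cric2} (both sides amounting to $G$ being edgeless). Combining the two cases with the defining property of $\mathcal{H}_{k+1}$ completes the argument.

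There is no genuine obstacle here; the only point requiring a little care is the mismatch in ranges --- Theorem~\ref{thm:kcirc} is stated only for $k\ge 3$, so the base case $k=2$ must be handled separately via Observation~\ref{obs:cric2} rather than being absorbed into a single uniform appeal. One should also take care to state the equivalence symmetrically, since the corollary is a biconditional: the implication $\chi_c(G)<k \Rightarrow G\in\mathcal{C}_{k+1}$ is supplied by Proposition~\ref{prop:circhom} (for $k\ge 3$) together with Observation~\ref{obs:cric2} (for $k=2$), while the converse direction for $k\ge 3$ is exactly the chain ``second statement implies third statement'' proved inside Theorem~\ref{thm:kcirc}.
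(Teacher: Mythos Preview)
Your proposal is correct and matches the paper's intended argument: the paper states the corollary without proof, preceding it only by the reminder that $\mathcal{H}_{k+1}$ was defined so that admitting an $\mathcal{H}_{k+1}$-free circular ordering is equivalent to membership in $\mathcal{C}_{k+1}$, leaving the rest to Theorem~\ref{thm:kcirc}. Your explicit handling of the boundary case $k=2$ via Observation~\ref{obs:cric2} is exactly the care the paper silently relies on, since Theorem~\ref{thm:kcirc} is stated only for $k\ge 3$.
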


\section{Complexity issues}\label{sec:complexity}

Now we look at the problem of determining whether an input graph admits an
$F$-free circular ordering, where $F$ is a fixed finite set of circularly
ordered graphs.  We call this problem the \textit{$F$-free circular ordering
problem}.

Recall that for any set of linearly ordered patterns on three vertices there is
a polynomial time algorithm that determines whether an input graph admits an
$F$-free linear ordering of its vertices or not \cite{hellESA2014}. When it
comes to circular orderings this observation is trivial since for any graph on
three vertices $G$ there is a unique circular ordering of $G$. Thus, for any set
$F$ of circularly ordered graph on three vertices, the $F$-free circular
ordering problem can be solved in polynomial time. What about forbidding larger
circularly ordered graphs?  The little evidence gathered at this point, suggest
that if $F$ is a set of circularly ordered graphs on four vertices, then the
$F$-free circular ordering problem could be polynomial time solvable. Before
looking at this case, we show that there is a set of circularly ordered graphs
$F$ on five vertices such that the $F$-free circular ordering problem is
$NP$-complete. Thus, for any positive integer $k$, $k\ge 5$, there is a set $F$
of circularly ordered graphs on $k$ vertices such that the $F$-free circular
ordering problem is $NP$-complete.

In \cite{hatamiJGT47} Hatami and Tusserkani consider the following decision
problem. The input is a graph $G$ together with its chromatic number $k$, and
one must decide if $\chi_c(G) < k$. Their main result asserts that this problem
is $NP$-hard. By reading their proof one can notice that they actually show that
this problem is $NP$-hard even when restricted to $4$-chromatic graphs. For the
sake of clarity we state this result as follows.

\begin{proposition}\cite{hatamiJGT47}
\label{prop:4NPhard}
Given a $4$-chromatic graph $G$, the problem of determining if $\chi_c(G) < 4$
is $NP$-hard.
\end{proposition}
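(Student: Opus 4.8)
The plan is to exhibit a polynomial-time reduction from a standard $NP$-complete problem --- I would use $3$-satisfiability (or $3$-colourability) --- that, from an instance $\Phi$, builds in polynomial time a graph $G_\Phi$ which is $4$-chromatic for \emph{every} $\Phi$ and satisfies $\chi_c(G_\Phi)<4$ if and only if $\Phi$ is a yes-instance. (Membership of the decision problem in $NP$ is not needed for the statement but is easy: by Proposition~\ref{prop:min} the inequality $\chi_c(G)<4$ is witnessed by a homomorphism $G\to K_{\sfrac{p}{q}}$ with $\sfrac{p}{q}<4$ and $p\le|V(G)|$, that is, by a polynomial-size labelling of $V(G)$ by $\{0,\dots,p-1\}$ whose validity is checkable in polynomial time.)

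For the reduction I would reason via the orientation characterisation of Theorem~\ref{thm:orientationsCirc}: since $G_\Phi$ will not be a forest, $\chi_c(G_\Phi)<4$ holds precisely when $G_\Phi$ admits an acyclic orientation in which every cycle $C$ satisfies $|C^+|/|C^-|<3$ under both traversal directions (there are finitely many cycles, so a strict inequality on each yields a strict inequality on the maximum). These constraints forbid directed cycles (acyclicity), force cycles with exactly one ``backward'' arc to have length at most $3$, and so on; the point is that such conditions can be met by suitable local gadgets in exactly one of two ``states''. Concretely I would take: a fixed core --- a copy of $\overline{C}_7\cong H^4_2$, which has chromatic number $4$ but $\chi_c=\sfrac{7}{2}$ --- attached by a few edges so that $\chi(G_\Phi)=4$ for all $\Phi$ while $\chi_c$ is not forced up; one \emph{variable gadget} per variable, admitting exactly two balanced acyclic orientations read as ``true''/``false''; and one \emph{clause gadget} per clause, joined to the relevant literal vertices and built so that it has a balanced orientation compatible with the chosen literal states if and only if the clause is satisfied. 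Global consistency of the orientation then encodes satisfiability of $\Phi$.

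The main obstacle is the gadget engineering and, above all, its verification. One must check (i) that each gadget on its own has exactly the intended family of balanced acyclic orientations, with no others balanced; (ii) that no \emph{accidental} cycle --- one running through several gadgets and the core --- has a forbidden forward/backward ratio, which is the delicate part because $\chi_c$ is a global parameter, so a single unbalanced cycle anywhere would force $\chi_c(G_\Phi)=4$ and break the reduction; and (iii) that $\chi(G_\Phi)=4$ genuinely holds for \emph{every} $\Phi$, by exhibiting both a subgraph (the core $\overline{C}_7$) certifying $\chi\ge 4$ and an explicit proper $4$-colouring certifying $\chi\le 4$. A cleaner route, and the one I would actually record in the paper, is simply to invoke the reduction of Hatami and Tusserkani \cite{hatamiJGT47} verbatim and observe that the graphs it produces are already $4$-chromatic --- a fact one reads directly off their construction --- which is exactly the refined statement asserted here.
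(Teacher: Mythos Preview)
Your final recommendation --- to simply invoke the reduction of Hatami and Tusserkani \cite{hatamiJGT47} and observe that the graphs it produces are already $4$-chromatic --- is exactly what the paper does: this proposition is stated as a cited result, and the surrounding text merely notes that ``by reading their proof one can notice that they actually show that this problem is $NP$-hard even when restricted to $4$-chromatic graphs.'' No independent proof is given or needed.

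The bulk of your proposal, however, sketches a from-scratch reduction via variable and clause gadgets controlled through the orientation characterisation of Theorem~\ref{thm:orientationsCirc}. This is not what the paper does (it proves nothing here), and as you yourself identify, the sketch is not a proof: the gadgets are not specified, and point~(ii) --- ruling out accidental unbalanced cycles across gadgets --- is exactly the hard global argument that any such construction must carry out in detail. So the first two paragraphs are at best a plausible plan, not a proof, and in any case superfluous given that the cleaner route you land on is precisely the paper's approach.
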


Now note that the problem stated in Proposition~\ref{prop:4NPhard} is also a
particular case of the problem of determining if an arbitrary graph $G$
satisfies $\chi_c(G) < 4$. Since this problem belongs to $NP$, then the
following statement directly follows from Proposition~\ref{prop:4NPhard}.

\begin{theorem}
\label{thm:4NPcomplete}
Given a graph $G$, the problem of determining if $\chi_c(G) < 4$ is
$NP$-complete.
\end{theorem}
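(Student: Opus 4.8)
The plan is to reduce this to the just-stated Proposition~\ref{prop:4NPhard} together with the elementary facts about the circular chromatic number collected in the introduction. First I would observe that the decision problem ``given a graph $G$, is $\chi_c(G) < 4$?'' is clearly in $NP$: by Proposition~\ref{prop:min} the circular chromatic number of an $n$-vertex graph is realised as some $\sfrac{p}{q}$ with $p \le n$, so a certificate is simply a pair $(p,q)$ with $\sfrac{p}{q} < 4$ together with an explicit homomorphism $\varphi \colon G \to K_{\sfrac{p}{q}}$; verifying that $\varphi$ preserves all the required adjacencies is polynomial in $n$, and so is checking $\sfrac{p}{q} < 4$. (One should note $K_{\sfrac{p}{q}}$ has at most $n$ vertices, so the certificate has polynomial size.)

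For $NP$-hardness I would invoke Proposition~\ref{prop:4NPhard} directly. That proposition says the problem is already $NP$-hard when the input is promised to be a $4$-chromatic graph; the point to make explicit is that deciding $\chi_c(G) < 4$ for \emph{arbitrary} $G$ is at least as hard, since the class of $4$-chromatic graphs is a subset of all graphs and the question asked is literally the same question. Formally, the identity map is a polynomial-time reduction from the restricted problem of Proposition~\ref{prop:4NPhard} to the general problem, so the general problem inherits $NP$-hardness. Combining this with membership in $NP$ gives $NP$-completeness. This is essentially the remark made in the sentence preceding the theorem statement, so the ``proof'' is really just a two-line formalisation of that remark.

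I do not expect any genuine obstacle here; the content is entirely in Hatami and Tusserkani's result, which we are permitted to assume, and in the trivial observations that the problem is in $NP$ and that a promise-restricted $NP$-hard problem is a special case of its unrestricted version. The only place where a careless writer could slip is in the $NP$-membership argument: one must be sure the witnessing $\sfrac{p}{q}$ has $p$ bounded polynomially (which Proposition~\ref{prop:min} guarantees, $p \le n$) so that both the target graph $K_{\sfrac{p}{q}}$ and the homomorphism certificate are of polynomial size. With that caveat addressed, the proof is complete in a few sentences.
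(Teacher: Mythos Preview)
Your proposal is correct and follows exactly the paper's approach: the paper's entire argument is the one-sentence remark preceding the theorem, namely that the restricted problem of Proposition~\ref{prop:4NPhard} is a special case of the unrestricted problem, and that membership in $NP$ is clear. You have simply fleshed out the $NP$-membership step (via Proposition~\ref{prop:min} and an explicit certificate) more carefully than the paper bothers to, which is a welcome addition rather than a deviation.
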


Recall that by Corollary~\ref{cor:circularbound}, $\mathcal{H}_5$ is a set of
circularly ordered graphs such that a graph $G$ admits an $\mathcal{H}_5$-free
circular ordering if and only if $\chi_c(G) < 4$. Thus as a consequence of this
observation and Theorem~\ref{thm:4NPcomplete} we obtain the following corollary.

\begin{corollary}
\label{cor:5NPcomp}
For every positive integer $k$, $k\ge 5$, there is a set $F$ of circularly
ordered graphs on $k$ vertices such that the $F$-free circular ordering problem
is $NP$-complete.
\end{corollary}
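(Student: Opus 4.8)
The ingredients are already assembled. By Corollary~\ref{cor:circularbound} (taken with $k=4$) a graph $G$ admits an $\mathcal{H}_5$-free circular ordering if and only if $\chi_c(G)<4$, and by Theorem~\ref{thm:4NPcomplete} deciding whether $\chi_c(G)<4$ is $NP$-complete; moreover $\mathcal{H}_5$ is a \emph{finite} set of circularly ordered graphs, each on at most five vertices, since every member is a homomorphic image of $SP_5$. So the only work remaining is to ``pad'' these forbidden structures up to exactly $k$ vertices, for each $k\ge5$, without changing the class they describe. The plan is to do this by a routine enlargement argument and then reassemble the reduction.

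Fix $k\ge5$ and let $F_k$ be the set of all circularly ordered graphs on exactly $k$ vertices that contain (an induced copy of) some member of $\mathcal{H}_5$; as $\mathcal{H}_5$ is finite, so is $F_k$. The key step is the following padding lemma: \emph{if $(G,C)$ is a circularly ordered graph with $|V(G)|\ge k$, then $(G,C)$ is $\mathcal{H}_5$-free if and only if it is $F_k$-free}. One implication is immediate from transitivity of the ``contains'' relation: a copy of a member of $F_k$ in $(G,C)$ contains a copy of a member of $\mathcal{H}_5$. For the other implication, suppose $(G,C)$ contains some $H\in\mathcal{H}_5$ on a vertex set $S$; since $|V(G)|\ge k\ge|S|$, enlarge $S$ to a set $S'$ with $|S'|=k$, and observe that the induced circularly ordered subgraph of $(G,C)$ on $S'$ has exactly $k$ vertices and still contains $H$, so it lies in $F_k$ and is contained in $(G,C)$, contradicting $F_k$-freeness. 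Consequently, a graph $G$ with at least $k$ vertices admits an $\mathcal{H}_5$-free circular ordering if and only if it admits an $F_k$-free one, because every circular ordering of such a $G$ has at least $k$ vertices.

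It remains to reduce the $NP$-complete problem ``is $\chi_c(G)<4$?'' to the $F_k$-free circular ordering problem. Given $G$, if $|V(G)|<k$ then add isolated vertices until the graph has exactly $k$ vertices; adding an isolated vertex changes neither $\chi_c$ (it imposes no constraint on a homomorphism to any rational complete graph, cf.\ Proposition~\ref{prop:min}) nor, by Corollary~\ref{cor:circularbound}, whether the graph admits an $\mathcal{H}_5$-free circular ordering. After this modification $G$ has at least $k$ vertices, so by the padding lemma $G$ admits an $F_k$-free circular ordering if and only if it admits an $\mathcal{H}_5$-free one, i.e.\ if and only if $\chi_c(G)<4$. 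This is a polynomial-time reduction, so the $F_k$-free circular ordering problem is $NP$-hard; it lies in $NP$ because a circular ordering $C$ of $V(G)$ is a polynomial-size certificate, verified by testing each of the $O(n^k)$ induced circularly ordered subgraphs of $(G,C)$ on $k$ vertices against the fixed finite family $F_k$. Hence this problem is $NP$-complete for every $k\ge5$.

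The argument is essentially bookkeeping; the one point that needs care is the ``hard'' direction of the padding lemma — that $F_k$-freeness of a sufficiently large circularly ordered graph forces $\mathcal{H}_5$-freeness, via extending a small forbidden induced copy to one on exactly $k$ vertices — together with the routine but necessary treatment of inputs on fewer than $k$ vertices, where no $k$-vertex forbidden pattern can occur. Everything genuinely substantial — a finite description of ``$\chi_c<4$'' by forbidden circular orderings, and its $NP$-completeness — is already provided by Corollary~\ref{cor:circularbound} and Theorem~\ref{thm:4NPcomplete}.
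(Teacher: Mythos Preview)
Your proof is correct and follows essentially the same approach as the paper: both start from the $NP$-completeness of the $\mathcal{H}_5$-free problem (via Corollary~\ref{cor:circularbound} and Theorem~\ref{thm:4NPcomplete}) and then pad the forbidden family up to $k$ vertices by taking all $k$-vertex circularly ordered graphs containing a member of the original family. You are more careful than the paper on two points it leaves implicit --- the treatment of inputs with fewer than $k$ vertices and the verification of membership in $NP$ --- and you also correctly note that members of $\mathcal{H}_5$ have \emph{at most} five vertices, whereas the paper loosely says ``on five vertices''.
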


\begin{proof}
By the arguments preceding this statement, $\mathcal{H}_5$ is a set of circular
ordered graphs on five vertices such that the $\mathcal{H}_5$-free circular
ordering problem is $NP$-complete. Moreover,  given a set $F$ of circular
ordered graphs on $k$-vertices such that the $F$-free circular ordering problem
is $NP$-complete, it is not hard to  construct a set $F'$ of circularly ordered
graphs on $(k+1)$-vertices such that the $F'$-free circular ordering is
$NP$-complete as well. Indeed, simply let $F'$ be the set of all circularly
ordered supergraphs on $(k+1)$-vertices of circularly ordered graphs in $F$.
\end{proof}

To conclude this section we construct a set $F_{CO}$ of circularly ordered
graphs such that the $F_{CO}$-free circular ordering problem nicely relates to
the \textit{cyclic ordering problem}. The cyclic ordering problem takes as an
input a set of ordered triples $R$ of some finite set $A$ and asks if the
triples of $R$ are generated by some circular ordering of $A$. This problem was
proved to be $NP$-complete in \cite{galilTCS5}.

Consider the graph $G_{aco}$ with vertex set $\{v_1,v_2,v_3,v_4,v_5,$ $v_6\}$
where $\{v_3,v_4,$ $v_5,v_6\}$ induce a clique and we add the edges $v_1v_6$,
$v_2v_5$ and $v_2v_6$. We define the circularly ordered graph $ACO$ as $G_{aco}$
with the circular closure of $v_1 < v_2 < v_3 < v_4 < v_5 <v_6$. We depict this
circularly ordered graph in Figure~\ref{fig:ACO}. Denote by $F_{CO}$ the set of
all circular orderings of $G_{aco}$ that are not isomorphic (as circularly
ordered graphs) to $ACO$. $F_{CO}$ is not an empty set, for instance, consider
$G$ with the circular ordering closure of $v_2 < v_1 < v_3 < v_4 < v_5 <v_6 $.

%%%%----------- Figura con ACAO

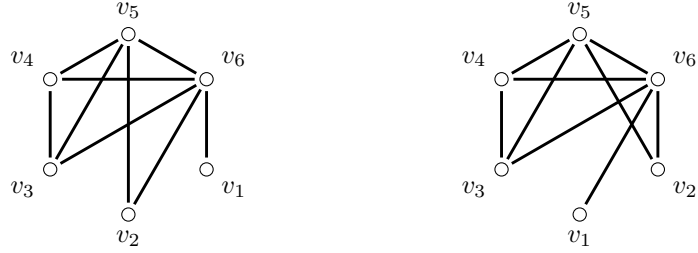
\begin{figure}[ht!]
\centering

\begin{tikzpicture}

\begin{scope}[xshift=-3cm, yshift = -4cm, scale=0.6]
\node [vertex, label=330:{$v_1$}] (1) at (330:2){};
\node [vertex, label=270:{$v_2$}] (2) at (270:2){};
\node [vertex, label=210:{$v_3$}] (3) at (210:2){};
\node [vertex, label=150:{$v_4$}] (4) at (150:2){};
\node [vertex, label=90:{$v_5$}] (5) at (90:2){};
\node [vertex, label=30:{$v_6$}] (6) at (30:2){};

\foreach \from/\to in {1/6,2/6,3/6,4/6,5/6,2/5,3/5,4/5,3/4}
\draw [edge] (\from) to (\to);
\end{scope}

\begin{scope}[xshift=3cm, yshift = -4cm, scale=0.6]
\node [vertex, label=270:{$v_1$}] (1) at (270:2){};
\node [vertex, label=330:{$v_2$}] (2) at (330:2){};
\node [vertex, label=210:{$v_3$}] (3) at (210:2){};
\node [vertex, label=150:{$v_4$}] (4) at (150:2){};
\node [vertex, label=90:{$v_5$}] (5) at (90:2){};
\node [vertex, label=30:{$v_6$}] (6) at (30:2){};

\foreach \from/\to in {1/6,2/6,3/6,4/6,5/6,2/5,3/5,4/5,3/4}
\draw [edge] (\from) to (\to);
\end{scope}

\end{tikzpicture}

\caption{On the left, a representation of $ACO$. On the right, a circular
ordered graph not isomorphic to $ACO$, but with the same underlying graph.}
\label{fig:ACO}
\end{figure}
%%%%%%%%%%%%---------%%%%%%%%%------

\begin{remark}
\label{rmk:ACO}
Note that there are only two automorphisms of $ACO$:  the identity, and the
transposition of $v_3$ with $v_4$ leaving every other vertex fixed. In any of
these two cases, the induced cyclic order in $\{ v_1, v_2, v_3 \}$ is the
circular closure of $v_1 < v_2 < v_3 $.
\end{remark}

\begin{theorem}
\label{thm:FCO-problem}
The $F_{CO}$-free circular ordering problem is $NP$-complete.
\end{theorem}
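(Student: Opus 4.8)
The plan is to reduce the cyclic ordering problem to the $F_{CO}$-free circular ordering problem. Membership in $NP$ is immediate: a circular ordering of $V(G)$ can be encoded by a linear ordering (its circular closure), and checking that a given circularly ordered graph avoids all members of the finite set $F_{CO}$ takes polynomial time. So the work is the hardness reduction. Given an instance $(A,R)$ of the cyclic ordering problem, where $R$ is a set of ordered triples of the finite set $A$, I would build a graph $G_R$ by taking, for each triple $(a,b,c)\in R$, a fresh copy of the underlying graph $G_{aco}$, identifying its vertices $v_1,v_2,v_3$ with $a,b,c$ respectively (in that order) and keeping $v_4,v_5,v_6$ as private new vertices of that gadget. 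The intention is that, by Remark~\ref{rmk:ACO}, the only circular orderings of a copy of $G_{aco}$ that avoid $F_{CO}$ are the two automorphic images of $ACO$, and in both of these the induced cyclic order on $\{v_1,v_2,v_3\}$ is $(a,b,c)$. Hence an $F_{CO}$-free circular ordering of $G_R$ must restrict to a circular ordering $C$ of $A$ that generates every triple of $R$, and conversely any circular ordering of $A$ generating $R$ can be extended, gadget by gadget, to an $F_{CO}$-free circular ordering of $G_R$ by placing the private vertices $v_4,v_5,v_6$ of each gadget consistently with $ACO$.

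The key steps, in order, are: (1) verify $F_{CO}$-freeness is polynomially checkable and hence the problem is in $NP$; (2) describe the gadget construction and confirm it is polynomial in $|A|+|R|$; (3) prove the forward direction — if $C$ is a circular ordering of $A$ generating $R$, then extending each gadget as in $ACO$ yields an $F_{CO}$-free circular ordering of $G_R$; here one must check that a clash cannot occur \emph{across} two different gadgets, which follows because the only shared vertices are in $A$ and the forbidden orderings in $F_{CO}$ all use the high-degree structure ($v_4,v_5,v_6$ form a clique with extra edges to $v_5,v_6$) that lives entirely inside a single gadget; and (4) prove the backward direction — any $F_{CO}$-free circular ordering of $G_R$ induces on each copy of $G_{aco}$ an ordering avoiding $F_{CO}$, which by definition of $F_{CO}$ must be isomorphic to $ACO$, so by Remark~\ref{rmk:ACO} it restricts to the circular closure of $v_1<v_2<v_3$ on $\{a,b,c\}$, i.e.\ generates $(a,b,c)$; doing this for all triples shows $R$ is generated by a circular ordering of $A$.

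The main obstacle is step (3): I must ensure the private vertices of distinct gadgets can be placed so that no \emph{new}, unintended copy of some $F\in F_{CO}$ is created spanning several gadgets. The cleanest way to handle this is to exploit the degree structure of $G_{aco}$: in any copy, $v_4,v_5,v_6$ have large degree inside the gadget, and the members of $F_{CO}$ (being circular orderings of $G_{aco}$) each contain this dense six-vertex pattern, so any induced copy of such an $F$ in $G_R$ must have all six of its vertices inside a single gadget (the only edges leaving a gadget are incident to $v_1,v_2,v_3$, and there are at most three such vertices in any copy of $G_{aco}$). Therefore it suffices to check the single-gadget condition, which is exactly Remark~\ref{rmk:ACO}. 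A secondary technical point is to confirm that extending a circular ordering of $A$ by inserting $v_4,v_5,v_6$ into the arc determined by $a,b,c$ indeed realizes $ACO$ up to the allowed $v_3\leftrightarrow v_4$ swap; this is a direct check against Figure~\ref{fig:ACO} and Remark~\ref{rmk:ACO}. With these pieces the equivalence, and hence $NP$-completeness, follows.
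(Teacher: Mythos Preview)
Your proposal is correct and follows the same route as the paper: reduce from the cyclic ordering problem by attaching, for each triple $(a,b,c)\in R$, a copy of $G_{aco}$ with $v_1,v_2,v_3$ identified with $a,b,c$ and $v_4,v_5,v_6$ private, then use Remark~\ref{rmk:ACO} for the backward direction and a gadget-by-gadget extension for the forward one.

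The one place your sketch needs tightening is exactly the step you flag as the main obstacle. Your parenthetical justification (``the only edges leaving a gadget are incident to $v_1,v_2,v_3$, and there are at most three such vertices in any copy of $G_{aco}$'') does not by itself rule out an induced copy of $G_{aco}$ straddling two gadgets; note also that it is $\{v_3,v_4,v_5,v_6\}$, not $\{v_4,v_5,v_6\}$, that forms the clique. The paper makes this precise in Claim~\ref{cl:FCO-problem}: since $G_{aco}$ has a unique universal vertex and $G_{aco}-v_6$ has a connected component on four vertices, the universal vertex of any induced copy cannot lie in $A$ (its neighbourhood is contained in a disjoint union of triangles) and cannot be an $r_x$ or $r_y$ (degree too small), so it must be some $r_z$, which forces the entire copy to equal $N[r_z]$, i.e., a single gadget. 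With that claim in hand, both directions go through exactly as you outline.
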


\begin{proof}
Consider an instance $(A,R)$ of the cyclic ordering problem. We construct the
input graph $G(A,R)$ for the $F_{CO}$-free circular ordering problem as follows.
The vertex set $V$ of $G(A,R)$, is the union $A \cup \mathfrak{R}$, where
$\mathfrak{R} = \{ r_x \colon r \in R \} \cup  \{r_y \colon r \in R \} \cup \{
r_z \colon r \in R \}$, i.e., $\mathfrak{R}$ contains three vertices for every
$r \in R$.  The set $A \subseteq V$ is an independent set and for every $r \in
R$ the vertices $r_x, r_y$ and $r_z$ induce a triangle. Finally, we add the
following edges between $A$ and $\mathfrak{R}$: for every $r = (a,b,c) \in R$ we
add the set of edges $\{r_za, r_zb, r_zc, r_yb, r_yc, r_xc\}$. In other words,
for $r \in R$, if $r = (a,b,c)$, the vertices $\{a, b, c, r_x, r_y, r_z\}$
induce a copy of $G_{ACO}$. A simple calculation shows that $|V| = |A| + 3|R|$,
and $|E| = 8|R|$, so this construction can be done in linear time. Before
showing that $G(A,R)$ is a yes-instance of the $F_{CO}$-free circular ordering
problem if and only if $(A,R)$ is yes-instance of the cyclic ordering problem,
we prove the following claim.

\begin{claim}
\label{cl:FCO-problem}
If $F$ is a set of vertices of $G(A,R)$ that induces a copy of $G_{ACO}$, then
there is an element $r \in R$, $r = (a,b,c)$, such that $F =
\{a,b,c,r_x,r_y,r_z\}$.
\end{claim}

We will show that if $F$ is such a set then $F = \{r_z\} \cup N(r_z)$ for some
$r\in R$; it should be clear that $N(r_z) = \{a,b,c,r_x,r_y\}$, where $r =
(a,b,c)$. Since $G_{ACO}$ has a unique universal vertex, then there is a unique
vertex $v_F\in F$ such that $F = \{v_F\} \cup N'$ where $N'\subseteq N(v_F)$. In
particular, the degree of  $v_F$ in $G(A,R)$ is at least five. It is not hard to
observe that for every $r\in R$ the degrees of $r_x$, $r_y$ and $r_z$ are $3$,
$4$ and $5$  respectively. Hence, $v_F = r_z$ for some $r\in R$ or $v_F \in A$.
By construction of $G(A,R)$, the graph induced by $\mathfrak{R}$ is a disjoint
union of triangles, so every connected subgraph of $G(A,R)[\mathfrak{R}]$
contains at most three vertices. Now note that the neighborhood of $v_F$ in
$G(A,R)[F]$ consists of an isolated vertex and a connected component on four
vertices. Since $A$ is an independent set, for every $a\in A$, $N(a) \subseteq
\mathfrak{R}$, so the neighborhood of $a$ cannot contain a connected component
on four vertices. Therefore $v_F \notin A$, and thus $v_F = r_z$ for some $r \in
R$. As mentioned at the begining of the paragraph, this concludes the proof of
Claim~\ref{cl:FCO-problem}.

Now we show that the proposed reduction translates yes-instances to
yes-instances and no-instances to no-instances of the corresponding problems.
Suppose that $G(A,R)$ admits an $F_{CO}$-free circular ordering $G'$.  Since the
only admissible circular orderings of $G_{ACO}$ are isomorphic to $ACO$, by
Remark~\ref{rmk:ACO}, for every ordered triple $(a,b,c) \in R$, the circular
ordering of $\{ a, b, c \}$ in $G'$ is $a < b < c < a$. So by considering the
circular ordering of $A$ inherited from $G'$, we obtain a circular ordering $R'$
of $A$, such that $R \subseteq R'$. Conversely, suppose that $(A,R)$ is a
yes-instance for the cyclic ordering problem, and let $a_1 < \cdots < a_n < a_1$
be the corresponding cyclic ordering of $A$. We want to extend this ordering to
a circular ordering of $V$. For every $r\in R$, $r = (a_i, a_j, a_k)$, include
$r_x,r_y,r_z$ in anyway such that $a_i < a_j < a_k < r_x < r_y < r_z < a_{k+1} <
a_i$. In other words, the circularly ordered graph induced by these vertices
($a_i,a_j,a_k,r_x,r_y$ and $r_z$) is a copy of $ACO$. Once we have extended the
circular ordering of $A$ to $V$ in this manner, call the resulting circularly
ordered graph $G'$. To see that $G'$ is an $F_{CO}$-free circularly ordered
graph, consider a set of six vertices $F \subseteq V$. If $F$ does not induce a
copy of $G_{ACO}$ in $G(A,R)$, then $F$ cannot induce a copy of any circularly
ordered graph in $F_{CO}$. If $F$ does induce a copy of $G_{ACO}$ in $G(A,R)$
then, by Claim~\ref{cl:FCO-problem}, there is an element $r\in R$, $r = (a, b,
c)$, such that $F = \{a, b, c, r_x, r_y, r_z\}$. Hence, by how we extended the
circular ordering of $A$ to $V$, $F$ induces a copy of $ACO$ in $G'$, and thus
it does not induce any circularly ordered graph of $F_{CO}$. Therefore, $G'$ is
an $F_{CO}$-free circular ordering of $G(A,R)$.

\end{proof}

\section{Conclusions and open problems}

It is now well-known that equipping a graph with a linear ordering of its vertex
set, or an orientation of its arc set, leads to characterizations of some
hereditary families of graphs in terms of finitely many forbidden induced
linearly ordered subgraphs or induced oriented subgraphs, respectively, in cases
where forbidding infinitely many induced subgraphs (without additional
structure) is needed.  In this work we show that similar results can be obtained
when we equip a graph with a circular ordering of its vertex set.   In this type
of problems, is natural to ask for the limitations of the proposed framework,
in particular, we think that the following problem is interesting.

\begin{problem}
Find a (relatively well-known) hereditary property that cannot be described by a
finite set of forbidden circularly ordered graphs.
\end{problem}

In this work we show that if a graph family can be described by finitely many
forbidden circularly ordered graphs, then it can be described by finitely many
forbidden linearly ordered graphs. So it is natural to ask if the converse
implication is also true. Since we do not think it is true, we propose this
question in a negative way.

\begin{question}
Is there a hereditary property described by finitely many forbidden linearly
ordered graphs that does not admit a characterization by finitely many
forbidden circularly ordered graphs?
\end{question}

In particular, we believe that the classes of $k$-colourable graphs are possible
candidates to answer the previous question in the negative, but finding any
such a class seems to be an interesting problem.

\begin{question}
For which positive integer $k$ the class of $k$-colourable graphs can be
described by finitely many forbidden circularly ordered graphs? In particular,
is there a finite set of circularly ordered graphs that describes  the class of
bipartite graphs?
\end{question}

There are a couple of characterizations of graphs with circular chromatic number
at least $3$ by certain unavoidable structures in every maximal triangle free
super graph. They read as follows.

\begin{theorem}
\cite{brandtCPC8}
Let $H$ be the graph obtained from the Petersen graph by deleting one vertex. A
graph $G$ has circular chromatic number at least $3$ if and only if every
maximal triangle-free supergraph $G_0$ of $G$ contains $H$ as a subgraph.
\end{theorem}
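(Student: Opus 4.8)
The plan is to split the equivalence into a short ``only if'' reduction, an explicit construction for ``if'', and one genuinely hard structural lemma about maximal triangle-free graphs, which I isolate. Note first that if $G$ contains a triangle then it has no triangle-free supergraph, so the right-hand condition is vacuously true, while the homomorphism $K_3\to G$ gives $\chi_c(G)\ge\chi_c(K_3)=3$; so both sides hold and we may assume $G$ is triangle-free. For the ``only if'' direction, let $G_0\supseteq G$ be any maximal triangle-free supergraph. The inclusion $G\hookrightarrow G_0$ is a homomorphism, so $\chi_c(G)\le\chi_c(G_0)$, and hence it suffices to establish the following \emph{Key Lemma}: if $G_0$ is maximal triangle-free and $H\not\subseteq G_0$, then $\chi_c(G_0)<3$. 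Granting it, $\chi_c(G)\ge 3$ forces $\chi_c(G_0)\ge 3$, hence $H\subseteq G_0$, for every maximal triangle-free supergraph $G_0$. (As a sanity check, the Petersen graph is itself maximal triangle-free, has $\chi_c=3$, and of course contains $H$.)

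For the ``if'' direction I would produce, from the assumption $\chi_c(G)<3$, a single maximal triangle-free supergraph of $G$ that avoids $H$. Here I use the fact that $\chi_c(H)\ge 3$: by Proposition~\ref{prop:min} this reduces to the finite check that $H$ has no homomorphism into $K_{\sfrac{8}{3}}$ (the largest rational complete graph below $K_3$ on at most $|V(H)|=9$ vertices), and together with monotonicity of $\chi_c$ under homomorphisms (a consequence of Theorem~\ref{thm:pqhoms}) it implies that no triangle-free graph with $\chi_c<3$ can contain $H$. Now, since $\chi_c(G)<3$, Lemma~\ref{lem:sequence} with $k=3$ gives an integer $m$ with $G\to H^3_m=K_{\sfrac{3m-1}{m}}$; recall $H^3_m$ is triangle-free since its clique number is $\lfloor\sfrac{3m-1}{m}\rfloor=2$. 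Let $n=|V(G)|$ and let $B$ be the balanced blow-up of $H^3_m$ in which each vertex is replaced by an independent set of size $n$ and two classes are joined completely exactly when the corresponding vertices are adjacent. Then $G$ embeds in $B$ (send each fibre of the homomorphism injectively into its class), $B$ is triangle-free (a blow-up of a triangle-free graph), and $B\to H^3_m$ (collapse classes) while $H^3_m\subseteq B$, so $\chi_c(B)=\chi_c(H^3_m)=\sfrac{3m-1}{m}<3$.

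The one point to verify for the construction, a short computation in $\mathbb{Z}_{3m-1}$, is that $B$ is \emph{maximal} triangle-free: two non-adjacent vertices of $B$ lie in classes spanning an arc of length at most $m-1$, and some class in the complementary (``antipodal'') arc sits at circular distance at least $m$ from both, hence is a common neighbour, so adding the edge creates a triangle. Thus $B$ is a maximal triangle-free supergraph of $G$ with $\chi_c(B)<3$, so $H\not\subseteq B$; by contraposition, $\chi_c(G)\ge 3$ whenever every maximal triangle-free supergraph of $G$ contains $H$.

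What remains is the Key Lemma, and this is where I expect the real difficulty. The strategy would be to exploit the rigidity of a maximal triangle-free graph $G_0$ — every neighbourhood $N(v)$ is then a maximal independent set, and every non-edge has a common neighbour — and, starting from a shortest odd cycle (or a carefully chosen vertex), to analyse how successive neighbourhoods overlap, using the hypothesis that $G_0$ is $H$-free to bound the ``branching'' of this structure. The intended payoff is either a homomorphism $G_0\to H^3_n$ for some $n$ (whence $\chi_c(G_0)\le\chi_c(H^3_n)=\sfrac{3n-1}{n}<3$), or directly an acyclic orientation of $G_0$ in which every cycle $C$ satisfies $\tfrac{|C^+|}{|C^-|}<2$, which gives $\chi_c(G_0)<3$ by Theorem~\ref{thm:orientationsCirc}. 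The hard part will be carrying out this case analysis completely: ruling out, using only the absence of a Petersen-minus-a-vertex subgraph, every configuration by which a dense maximal triangle-free graph could fail to admit such a circular $3$-colouring. I would not expect a purely elementary argument here — some version of Brandt's structural theory of dense triangle-free graphs seems to be needed.
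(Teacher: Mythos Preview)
The paper does not prove this theorem at all; it is quoted from \cite{brandtCPC8} in the concluding section purely as context, with no argument supplied. So there is no proof in the paper to compare your proposal against.

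On its own merits, your outline is reasonable but the ``if'' construction has a gap. The phrase ``maximal triangle-free supergraph $G_0$ of $G$'' is standardly read as a graph on the same vertex set $V(G)$: one adds edges, not vertices, until any further edge forces a triangle. Your blow-up $B$ has $(3m-1)\,|V(G)|$ vertices, so it is not a supergraph of $G$ in this sense, and the conclusion ``$B$ is a maximal triangle-free supergraph of $G$'' does not follow. Restricting $B$ to the embedded copy of $V(G)$ does give a triangle-free spanning supergraph $G'$ with $\chi_c(G')<3$, but $G'$ need not be edge-maximal, and once you extend it to a maximal triangle-free graph on $V(G)$ you lose control of the circular chromatic number. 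A more direct repair is to pull the edge relation back along the homomorphism $\varphi\colon G\to H^3_m$, i.e.\ set $uv\in E(G_0)$ iff $\varphi(u)\varphi(v)\in E(H^3_m)$; then $G_0$ is a triangle-free spanning supergraph with $\chi_c(G_0)<3$, and the common-neighbour computation you did for $B$ shows $G_0$ is maximal \emph{provided} $\varphi$ is surjective, which must still be arranged.

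Your reduction of the ``only if'' direction to the Key Lemma is clean, and you are right that this lemma is where essentially all the content of Brandt's result lives; what you have written for it is a plan rather than a proof, and nothing in the present paper will help you carry it out.
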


\begin{theorem}
\cite{pachDM37}
A graph $G$ has circular chromatic number at least $3$ if and only if every
maximal triangle-free supergraph $G_0$ of $G$ has an independent set whose
elements do not have a common neighbour.
\end{theorem}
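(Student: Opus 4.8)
The plan is to prove the equivalent statement ``$\chi_c(G)<3$ if and only if $G$ has a maximal triangle-free (spanning) supergraph $G_0$ in which every independent set has a common neighbour'' (the two cases $G$ with a triangle are trivial on both sides), and to handle its two directions separately. First I would record three easy facts. (i) If $2\le p/q<3$ then $K_{p/q}$ is triangle-free: three vertices pairwise at circular distance $\ge q$ would cut the $p$-circle into arcs of total length $\ge 3q>p$. (ii) The circular cliques $K_{(3k-1)/k}$ (these are the Andr\'asfai graphs, i.e.\ the graphs $H^3_k$ of Section~\ref{sec:circhrom}) are cofinal among all $K_{p/q}$ with $p/q<3$, so by Theorem~\ref{thm:pqhoms} and Proposition~\ref{prop:min}, $\chi_c(G)<3$ if and only if $G\to K_{(3k-1)/k}$ for some $k$. (iii) $\chi_c$ is monotone under spanning subgraphs, so a spanning supergraph $G_0\supseteq G$ has $\chi_c(G)\le\chi_c(G_0)$; and in a maximal triangle-free graph every neighbourhood is independent and any two non-adjacent vertices have a common neighbour.

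For the direction $\chi_c(G)<3\ \Rightarrow\ $ such a $G_0$ exists: fix $\varphi\colon G\to N$ with $N=K_{(3k-1)/k}$ and take the $\varphi$-blow-up $G_0$ of $N$ on the vertex set $V(G)$, i.e.\ insert all edges between $\varphi^{-1}(a)$ and $\varphi^{-1}(b)$ whenever $ab\in E(N)$. Then $G\subseteq G_0$, and $G_0$ is triangle-free because blow-ups of triangle-free graphs are triangle-free. One verifies directly that each Andr\'asfai graph is maximal triangle-free with every maximal independent set equal to a neighbourhood (its maximal independent sets are precisely the arcs of $k$ consecutive vertices); pulling this back through $\varphi$ shows $G_0$ is maximal triangle-free and that every independent set of $G_0$ lies inside $\bigcup_{a\in N_N(b)}\varphi^{-1}(a)=N_{G_0}(w)$ for a vertex $w\in\varphi^{-1}(b)$, which is then the required common neighbour. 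The delicate point is that this needs $\varphi$ to be onto $N$; since $G$ need not admit a surjective homomorphism to a circular clique, in general one must instead pass to a maximal triangle-free graph with the ``neighbourhood $=$ maximal independent set'' property that still receives $\varphi(G)$, or take a maximal triangle-free completion of the blow-up and check that the covering property is not destroyed.

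For the converse, suppose $G_0\supseteq G$ is a maximal triangle-free spanning supergraph in which every independent set has a common neighbour; by (iii) it suffices to prove $\chi_c(G_0)<3$. The hypothesis is rigid: since each neighbourhood of $G_0$ is independent and contains the independent set for which it is a common neighbour, every maximal independent set of $G_0$ is exactly some $N(w)$. Maximal triangle-free graphs with this property are precisely the objects analysed in \cite{pachDM37}, and the substantive step is to show that each of them admits a homomorphism to some Andr\'asfai graph $K_{(3k-1)/k}$; Theorem~\ref{thm:pqhoms} and Proposition~\ref{prop:min} then give $\chi_c(G_0)\le(3k-1)/k<3$. Alternatively one could extract from the structure an acyclic orientation of $G_0$ in which $|C^+|/|C^-|<2$ for every cycle $C$ and appeal to Theorem~\ref{thm:orientationsCirc}.

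The main obstacle is this last structural step --- classifying, or at least mapping into an Andr\'asfai graph, the maximal triangle-free graphs whose maximal independent sets are exactly the vertex neighbourhoods. This is where essentially all the content of \cite{pachDM37} lies; it runs parallel to the Andr\'asfai--Erd\H{o}s--S\'os circle of results and to Brandt's theorem quoted above. Everything else --- cofinality of the Andr\'asfai graphs among circular cliques of value below $3$, triangle-freeness of those cliques, monotonicity of $\chi_c$, and the blow-up construction --- is routine bookkeeping with results already recalled in this section.
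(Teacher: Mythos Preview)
The paper does not prove this statement: it is quoted from \cite{pachDM37} in the concluding section purely as context, with no argument supplied. There is therefore no proof in the paper to compare your proposal against.

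On the proposal itself: your outline correctly identifies the natural architecture --- reformulate via the Andr\'asfai graphs $K_{(3k-1)/k}$ using their cofinality among circular cliques of value below~$3$, push the forward direction through a blow-up, and reduce the converse to the structure of maximal triangle-free graphs whose maximal independent sets are exactly the vertex neighbourhoods. But you then explicitly defer that last structural step to \cite{pachDM37}, and that step \emph{is} the content of the theorem you are meant to prove; what you have written is a reformulation of Pach's theorem that hands the substantive implication back to Pach. The surjectivity issue you flag in the forward direction is also a genuine gap: neither of your suggested patches (factoring through a smaller target, or taking a maximal triangle-free completion of the blow-up and hoping the covering property survives) is obviously correct without further argument, since an arbitrary induced subgraph of an Andr\'asfai graph need not itself have the ``every maximal independent set is a neighbourhood'' property, and adding edges to reach maximality can create new independent sets not covered by any neighbourhood.
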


Even though they are nice characterizations, they yield no information on how to
generalize it for larger circular chromatic numbers. Theorem~\ref{thm:kcirc}
provides a nice characterization through unavoidable structures in circular
orderings of graphs with circular chromatic number at least $k$ for any $k\ge
3$.

\begin{corollary}
Let $k$ be a positive integer, $k\ge 3$. A graph $G$ satisfies $\chi_c(G) \ge k$
if and only if for every circular ordering of $V(G)$, $v_1\le v_2\le \cdots \le
v_n \le v_1$, there are $k$ vertices $u_1 \le u_2 \le \cdots \le u_k$ such that
for every $i\in \{1,\cdots k-1\}$ there is an edge $u_iu_{i+1}\in E(G)$ (note
that $u_1$ might be $u_k$).
\end{corollary}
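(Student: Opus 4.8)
The plan is to deduce the corollary from Theorem~\ref{thm:kcirc} (equivalently, Corollary~\ref{cor:circularbound}) by contraposition, after translating the homomorphism condition appearing there into the elementary combinatorial statement about vertices and edges along the circle.

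First I would write down the reformulation. By Theorem~\ref{thm:kcirc}, $\chi_c(G) < k$ if and only if $G$ admits a circular ordering $c(\le)$ with $SP_{k+1} \not\to (G,c(\le))$. Taking the negation of both sides, $\chi_c(G) \ge k$ if and only if for every circular ordering $v_1 \le v_2 \le \cdots \le v_n \le v_1$ of $V(G)$ there exists a homomorphism $SP_{k+1} \to (G,c(\le))$ of circularly ordered graphs. Hence it suffices to show that, for a fixed circular ordering, the existence of such a homomorphism is equivalent to the displayed condition that there be $k$ vertices $u_1 \le u_2 \le \cdots \le u_k$ with $u_i u_{i+1} \in E(G)$ for every $i \in \{1,\dots,k-1\}$ (with $u_1 = u_k$ permitted).

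The heart of the argument is then to unpack a homomorphism $\varphi \colon SP_{k+1} \to (G,c(\le))$. Put $u_i = \varphi(v_i)$ for the images of the path's vertices $v_1,\dots,v_{k+1}$, which are listed in both the path order and the natural circular order of $SP_{k+1}$. Since $v_iv_{i+1} \in E(SP_{k+1})$, each $u_iu_{i+1}$ is an edge of $G$, so in particular $u_i \ne u_{i+1}$; and since $\varphi$ respects the ternary relation and the circular order of $SP_{k+1}$ is the circular closure of $v_1 \le \cdots \le v_{k+1}$, the images $u_1,\dots,u_{k+1}$ occur in the circular ordering $c(\le)$ in this weakly monotone arrangement. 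As in the argument used in the proof of Proposition~\ref{prop:circhom}, the only pair of vertices of $SP_{k+1}$ that $\varphi$ can identify is the pair of endpoints $v_1, v_{k+1}$ (any other identification would place a vertex strictly between two identified ones and force a triple of the forbidden form $(x,y,x)$ into the circular ordering of $G$). Thus either $u_1,\dots,u_{k+1}$ are pairwise distinct, or $u_1 = u_{k+1}$; in either case reading off $u_1,\dots,u_k$ yields $k$ vertices occurring along the circle in the required weakly increasing order with $u_iu_{i+1} \in E(G)$ for $i < k$, and the identification $u_1 = u_{k+1}$ of the path's endpoints is exactly the parenthetical remark ``$u_1$ might be $u_k$''. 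For the converse I would, given such a list $u_1 \le \cdots \le u_k$, define $\varphi(v_i) = u_i$ for $i \le k$ and either adjoin a final vertex $u_{k+1}$ with $u_ku_{k+1}\in E(G)$ inserted between $u_k$ and $u_1$ on the circle, or, in the wrap-around case, set $\varphi(v_{k+1}) = u_1$; checking that this preserves edges and the ternary relation is routine.

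Assembling these gives the corollary. The step I expect to require the most care is the unpacking just described — specifically the claim that a homomorphism of circularly ordered graphs out of a simple path can collapse no vertices except the two endpoints, so that a homomorphic image of $SP_{k+1}$ is either a simple $(k+1)$-path or a simple $k$-cycle $SC_k$. I would prove this from the circular-order axioms (using that $(x,y,x)$ never belongs to a circular ordering) together with the fact that in the path $v_1 v_2 \cdots v_{k+1}$ the only cyclically consecutive pair that is not an edge is $\{v_{k+1}, v_1\}$; the rest is bookkeeping with Theorem~\ref{thm:kcirc}.
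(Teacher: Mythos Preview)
Your strategy—contrapose Theorem~\ref{thm:kcirc} and translate ``$SP_{k+1}\to(G,c(\le))$'' into explicit vertices and edges—is exactly what the paper (which states the corollary without proof) implicitly relies on, and your forward unpacking of a homomorphism $\varphi$, including the argument that only the two endpoints $v_1,v_{k+1}$ can be identified, is correct and mirrors the reasoning in Proposition~\ref{prop:circhom}.

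The problem is the bookkeeping you perform to match the corollary's literal wording. A homomorphism from $SP_{k+1}$ gives $k+1$ images and $k$ edges, with $u_1=u_{k+1}$ the only possible collapse; you then ``read off $u_1,\dots,u_k$'' (silently discarding the edge $u_ku_{k+1}$) and equate the parenthetical ``$u_1$ might be $u_k$'' with $u_1=u_{k+1}$, which is not the same identification. This becomes a genuine gap in your converse: from only $k$ vertices and $k-1$ edges you propose to ``adjoin a final vertex $u_{k+1}$ with $u_ku_{k+1}\in E(G)$'', but nothing in the hypothesis guarantees such a neighbour exists. In fact the corollary as literally printed is false: for $k=3$, any copy of $P_3=abc$ in $G$ satisfies the displayed condition in \emph{every} circular ordering (one of $(a,b,c)$, $(c,b,a)$ always lies in $C$), so the statement would force $\chi_c(C_5)\ge 3$. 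The intended statement has $k+1$ vertices $u_1\le\cdots\le u_{k+1}$, edges $u_iu_{i+1}$ for $i\in\{1,\dots,k\}$, and parenthetical ``$u_1$ might be $u_{k+1}$'' (compare the theorem and corollary immediately following, which correctly use $k+1$ vertices); with that correction your argument goes through verbatim.
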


Theorem~\ref{thm:kcirc} also provides a characterization of the class of graphs
such that $\chi_C(G) = \chi(G)$ in terms of linear orderings of the vertex set.

\begin{theorem}
A graph $G$ satisfies that $\chi(G) = \chi_c(G) = k$ if and only if every
ordering $\le$ of $V(G)$ that avoids the structure $v_1 \le v_2 \le \cdots \le
v_{k+1}$ where  $v_iv_{i+1}\in E(G)$ for $i\in\{1,\cdots, k\}$, contains the
structure $u_1\le u_2 \le \cdots \le u_{k+1}$ where $u_1u_{k+1} \in E(G)$ and
$u_iu_{i+1}\in E(G)$ for $i\in\{2,\cdots, k\}$ (note that $u_1$ might be $u_2$).
\end{theorem}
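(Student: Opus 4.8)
The plan is to obtain the statement as a quick consequence of Theorem~\ref{thm:kcirc}, Proposition~\ref{prop:kcollinear}, and the inequality $\chi_c(G)\le\chi(G)$ (recorded after Proposition~\ref{prop:min}). The first step is to split the hypothesis. Since $\chi_c(G)\le\chi(G)$, the condition $\chi(G)=\chi_c(G)=k$ is equivalent to the conjunction ``$\chi(G)\le k$'' and ``$\chi_c(G)\ge k$'', and I would translate each half into a condition on linear orderings of $V(G)$. By Proposition~\ref{prop:kcollinear}, $\chi(G)\le k$ holds exactly when $G$ admits a linear ordering with no $k+1$ vertices $v_1\le\cdots\le v_{k+1}$ with $v_iv_{i+1}\in E(G)$ for all $i\in\{1,\dots,k\}$, i.e.\ a linear ordering avoiding the ``forbidden'' structure of the statement; following Section~\ref{sec:circhrom} I will call that structure (and its spanning supergraphs) $St_{k+1}$.

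For the other half I would use the equivalence of the second and third bullets of Theorem~\ref{thm:kcirc}, with the same parameter $k$: $\chi_c(G)<k$ holds exactly when $G$ admits a $\mathcal{PH}_{k+1}$-free linear ordering, so $\chi_c(G)\ge k$ holds exactly when \emph{every} linear ordering of $G$ contains a member of $\mathcal{PH}_{k+1}$, that is, contains one of the four spanning-supergraph patterns $St_{k+1}$, $sSt_{k+1}$, $StC_{k+1}$, $StC_k$. The key observation that collapses this four-pattern condition to the single structure of the statement is that the monotonically ordered $(k+1)$-cycle $StC_{k+1}$ already contains $St_{k+1}$: among its edges are $v_1v_2,v_2v_3,\dots,v_kv_{k+1}$, a monotone path on $k$ edges. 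Hence a linear ordering that avoids $St_{k+1}$ automatically avoids $StC_{k+1}$ as well, so for such an ordering ``contains a member of $\mathcal{PH}_{k+1}$'' reduces to ``contains $sSt_{k+1}$ or $StC_k$''. A brief inspection of those two ordered graphs identifies this with the $u$-structure of the statement: taking $u_1=u_2$ makes $u_1\le u_2\le\cdots\le u_{k+1}$ with $u_1u_{k+1},u_2u_3,\dots,u_ku_{k+1}\in E(G)$ into the monotone $k$-cycle $StC_k$, and taking $u_1\ne u_2$ makes it into $sSt_{k+1}$.

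Finally I would assemble the two halves. Sorting all linear orderings of $G$ by whether or not they contain $St_{k+1}$, the previous paragraph shows that ``$\chi_c(G)\ge k$'' is equivalent to ``every linear ordering of $G$ that avoids $St_{k+1}$ contains the $u$-structure of the statement''; combined with ``$\chi(G)\le k$'' this yields exactly the asserted biconditional (here the existence of \emph{some} ordering avoiding $St_{k+1}$ is the conjunct ``$\chi(G)\le k$'', which is the non-vacuousness reading implicit in the ``for every ordering that avoids \dots'' phrasing). I do not expect a genuine obstacle: the real work is already contained in Theorem~\ref{thm:kcirc}. The only points needing care are the bookkeeping of the generators of $\mathcal{PH}_{k+1}$ -- in particular the redundancy of $StC_{k+1}$ once $St_{k+1}$ is forbidden, which is what drives the reduction -- and the small cases $k\le 2$ (to which $\mathcal{PH}_{k+1}$ and Theorem~\ref{thm:kcirc} do not literally apply), where $\chi_c(G)=\chi(G)=k$ forces $G$ to be edgeless ($k=1$) or bipartite with at least one edge ($k=2$) and the stated condition can be checked by hand.
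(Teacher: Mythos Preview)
Your proposal is correct and follows exactly the route the paper intends: the theorem is stated in the conclusions without proof, merely as a consequence of Theorem~\ref{thm:kcirc} (together with Proposition~\ref{prop:kcollinear}), and you have supplied precisely that derivation. Your observation that $StC_{k+1}$ is redundant once $St_{k+1}$ is forbidden, so that the remaining generators $sSt_{k+1}$ and $StC_k$ collapse into the single $u$-structure (with the case split $u_1=u_2$ versus $u_1\ne u_2$), is the bookkeeping the paper leaves implicit; your handling of the vacuous case and of $k\le 2$ is also appropriate, since the paper does not address either explicitly.
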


\begin{corollary}
Consider a graph $G$ with $\chi(G) = k$. If $G$ admits a $k$-coloring, $c\colon
V(G)\to \{1,\cdots, k\}$, with no (possibly closed) walk $v_1v_2 \cdots v_{k+1}$
such that $c(v_i) = i$ for $i\in\{1,\cdots, k\}$ and $c(v_{k+1}) = 1$, then
$\chi_c(G) < \chi(G)$.
\end{corollary}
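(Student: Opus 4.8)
The plan is to reduce the statement to Theorem~\ref{thm:kcirc}, or equivalently to the specialisation of it stated just before this corollary for graphs with $\chi(G)=k$. Since $\chi_c(H)\le\chi(H)$ for every graph $H$, it suffices to exhibit a $\mathcal{PH}_{k+1}$-free linear ordering of $G$: then $\chi_c(G)<k=\chi(G)$ follows at once. (The cases $k\le 2$ being vacuous or trivial, one may assume $k\ge 3$.)

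The ordering I would use is the \emph{colour ordering} $\le$ determined by the given $k$-colouring $c$: list first all vertices of colour $1$, then all of colour $2$, and so on up to colour $k$, breaking ties inside a colour class arbitrarily. Its only relevant feature is monotonicity, namely $u\le v\Rightarrow c(u)\le c(v)$; since $c$ is proper, this yields the single tool of the whole argument: if $x_1\le\cdots\le x_t$ satisfy $x_ix_{i+1}\in E(G)$ for all $i$, then $c(x_1)<c(x_2)<\cdots<c(x_t)$. In particular $(G,\le)$ contains neither $St_{k+1}$ nor $StC_{k+1}$ --- equivalently, no structure $v_1\le\cdots\le v_{k+1}$ with $v_iv_{i+1}\in E(G)$ --- because such a configuration would require $k+1$ distinct colours. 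This step uses only $\chi(G)\le k$, not the walk hypothesis.

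The walk hypothesis is exactly what rules out the two remaining generators of $\mathcal{PH}_{k+1}$, namely $StC_k$ and $sSt_{k+1}$ (Figure~\ref{fig:PH}); equivalently, it rules out the ``$u$-structure'' $u_1\le\cdots\le u_{k+1}$ with $u_1u_{k+1}\in E(G)$ and $u_iu_{i+1}\in E(G)$ for $i\in\{2,\dots,k\}$. Suppose $(G,\le)$ contained such a configuration. Applying the tool above to $u_2\le\cdots\le u_{k+1}$ forces $c(u_{i+1})=i$ for every $i\in\{1,\dots,k\}$, and monotonicity applied to $u_1\le u_2$ forces $c(u_1)\le c(u_2)=1$, hence $c(u_1)=1$. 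Then $W=u_2u_3\cdots u_{k+1}u_1$ is a walk in $G$ --- closed precisely when $u_1=u_2$ --- along which the colour sequence is $1,2,\dots,k,1$, which is one of the walks forbidden by hypothesis. This contradiction shows that $(G,\le)$ is $\mathcal{PH}_{k+1}$-free, and the corollary follows.

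I do not anticipate a genuine obstacle: once the colour ordering is in place, everything is bookkeeping. The one point to be careful about is the translation between the combinatorial configurations generating $\mathcal{PH}_{k+1}$ --- which fold $StC_k$ and $sSt_{k+1}$ into the single ``$u$-structure'' above (and make $StC_{k+1}$ redundant given $St_{k+1}$) --- and the colour pattern $1,2,\dots,k,1$ of a (possibly closed) walk. The crucial realisation is that the vertex closing that walk is forced to receive colour $1$: it is the least vertex of the cycle in the $StC_k$ case and the distinguished least vertex $v_{k+1}$ in the $sSt_{k+1}$ case, and in each case monotonicity of $c$ pins down its colour. The degenerate possibility $u_1=u_2$ in the statement corresponds harmlessly to $W$ being closed rather than open.
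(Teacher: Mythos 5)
Your proposal is correct and is essentially the intended derivation: the paper states this corollary without proof as a consequence of the preceding theorem (itself a restatement of Theorem~\ref{thm:kcirc}), and the argument it has in mind is exactly yours --- order the vertices by colour class, note that a monotone chain of consecutive edges forces strictly increasing colours (so $St_{k+1}$, hence also $StC_{k+1}$, is excluded by properness alone), and observe that any occurrence of the remaining $u$-structure, i.e.\ $sSt_{k+1}$ or $StC_k$, forces the colour pattern $1,2,\dots,k,1$ along a (possibly closed) walk, contradicting the hypothesis, so Theorem~\ref{thm:kcirc} gives $\chi_c(G)<k=\chi(G)$. The only quibble is the aside that $k\le 2$ is ``vacuous or trivial'': for $k=2$ the hypothesis is indeed unsatisfiable, while for $k=1$ the literal statement degenerates ($\chi_c(G)=\chi(G)=1$ for a nonempty edgeless graph), but that is an artifact of the statement's phrasing, not a flaw in your argument for the intended range $k\ge 3$.
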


We dealt with complexity issues of the $F$-free circular ordering problem -- we
show that for every positive integer $k$, $k\ge 5$, there is a set $F$ of
circularly ordered graphs on $k$ vertices such that the $F$-free circular
ordering problem is $NP$-complete. Moreover, we already discussed that for every
set $F$ of circularly ordered  graphs on $3$ vertices, the $F$-free circular
ordering problem can be solved in polynomial time, so we ask:

\begin{question}\label{q:ord}
Is there a set $F$ of circularly ordered graphs on $4$ vertices such that
the $F$-free circular ordering problem is $NP$-complete?
\end{question}

We showed that the $\mathcal{H}_5$-free circular ordering problem is
$NP$-complete as a consequence of Theorem~\ref{thm:4NPcomplete}, which states
that determining if $\chi_c(G) < 4$ for an arbitrary graph $G$ is an
$NP$-complete problem as well. On the other hand, deciding if a graph $G$
satisfies that $\chi_c(G) < 2$ can be (trivially) done in polynomial time. It is
only natural to ask the following question.

\begin{question}
\label{q:circ}
Given a graph $G$, is the problem of determining if $\chi_c(G) < 3$ an
$NP$-complete problem?
\end{question}

Note that by Corollary~\ref{cor:circularbound}, Question~\ref{q:circ} is a
particular instance of Question~\ref{q:ord} since a graph $G$ admits an
$\mathcal{H}_4$-free circular ordering if and only if $\chi_c(G) < 3$, and
$\mathcal{H}_4$ consists of the triangle and circularly ordered graphs on $4$
vertices.

\end{document}